\newtheorem{Definition}{Definition}[part]
\newtheorem{Lemma}{Lemma}[part]
\newtheorem{Proposition}{Proposition}[part]
\newtheorem{Remark}{Remark}[part]
\newtheorem{Theorem}{Theorem}[part]
\numberwithin{Assumption}{section} \numberwithin{Corollary}{section}
\numberwithin{Definition}{section} \numberwithin{equation}{section}
\numberwithin{Example}{section} \numberwithin{Lemma}{section}
\numberwithin{Proposition}{section} \numberwithin{Remark}{section}
\numberwithin{Theorem}{section}
\def\x{\xi}
\date{}
\title{Global weak solutions to 3D compressible Primitive equations with density-dependent viscosity}
\author [Fengchao Wang~~and~~ ]{}
\date{}
\begin{document}
\maketitle
\centerline{\scshape FENGCHAO WANG }
\medskip
{\footnotesize
  \centerline{School of Mathematical Sciences, Capital Normal University,
Beijing  100048, P. R. China}
   \centerline{  \it Email:wfcwymm@163.com}
   }
\vspace{3mm}
  \centerline{\scshape CHANGSHENG DOU }
  \medskip
{\footnotesize
  \centerline{School of Statistics, Capital University of Economics and Business,
 Beijing 100070, P.R. China}
   \centerline{  \it Email:  douchangsheng@cueb.edu.cn}
   }
\vspace{3mm}
\centerline{\scshape QUANSEN JIU }
  \medskip
{\footnotesize
  \centerline{School of Mathematical Sciences, Capital Normal University,
 Beijing  100048, P. R. China}
   \centerline{  \it Email: jiuqs@cnu.edu.cn}
   }

\pagestyle{myheadings} \thispagestyle{plain}\markboth{} {\small{global weak solutions to 3D Compressible Primitive equations }}

\vspace{3mm}

\textbf{Abstract:}
This paper is devoted to investigating the global existence of weak solutions for the compressible primitive equations (CPE) with damping term in a three-dimensional torus for large initial data. The system takes into account density-dependent viscosity. In our proof, we  represent the vertical velocity  as a function of the density and the  horizontal velocity which will play a role  to use the Faedo-Galerkin method to obtain the global existence of the approximate solutions. Motivated by Vasseur and Yu \cite{yucheng2016},  we obtain the key estimates of lower bound of the density, the Bresch-Desjardin entropy  on the approximate solutions. Based on these estimates, using compactness arguments, we prove the global existence of weak solutions of CPE by vanishing the parameters in our approximate system step by step.

\textbf{Keywords:}
global weak solutions; compressible primitive equations; density-dependent viscosity.

\section{Introduction}
 \setcounter{equation}{0}
\setcounter{Assumption}{0} \setcounter{Theorem}{0}
\setcounter{Proposition}{0} \setcounter{Corollary}{0}
\setcounter{Lemma}{0}

In this paper, we consider the following compressible
primitive equations(CPE) with density-dependent viscosity
coefficients:
\begin{equation}\label{a1}
\left\{\begin{array}{lll}
\partial_{t}\rho+{\rm div}_{x}(\rho u)+\partial_{y}(\rho v)=0,\\
\partial_{t}(\rho u)+{\rm div}_{x}(\rho u\otimes u)+\partial_{y}(\rho uv)+\nabla_{x}P(\rho)+r\rho|u|u\\
=2{\rm div}_{x}(\nu_{1}D_{x}(u))+\partial_{y}(\nu_{2}\partial_{y}u),\\
\partial_{y}P(\rho)=-g\rho,P(\rho)=c^{2}\rho.
\end{array}\right.
\end{equation}
Here $(t,x,y)$ are independent variables, $t$ is the time, $x$ and $y$ are the horizontal and vertical coordinate with $x=(x_{1},x_{2})$, ${\rm div}_{x}=\partial_{x_{1}}+\partial_{x_{2}}$, $D_{x}(u)$ is the strain tensor with $D_{x}(u)=\frac{\nabla u+\nabla u^{\mathrm{T}}}{2}$, $\rho$ is the density of the medium, $\textbf{u}=(u,v)$ is the velocity, where $u$ is the horizontal component
and $v$ is the vertical component, $P$ is the pressure, $g>0$ is the free fall acceleration, and $(\nu_{1},\nu_{2})$ are the turbulence viscosities in the horizontal and vertical directions respectively.

Suppose that motion of the medium occurs in a bounded domain $\widetilde{\Omega}=\{(x,y);x\in\widetilde{\Omega}_{x},0<y<H\}$ where $\widetilde{\Omega}_{x}=T^{2}$ is the bi-dimensional torus and the boundary conditions on $\partial\widetilde{\Omega}$ are expressed by the relations
  \begin{equation}\label{a2}
  \begin{aligned}
  & periodic\ condtions\ on \ \partial\widetilde{\Omega}_{x},\\
  &v_{|y=0}=v_{|y=H}=0,\\
  &\partial_{y}u|_{y=0}=\partial_{y}u|_{y=H}=0.
  \end{aligned}
  \end{equation}
  The distribution of the horizontal component of the velocity and the density distribution are known at the initial time $t=0$:
\begin{equation}\label{a3}
 \rho u(0,x,y)=\widetilde{m}_{0}, \ \rho(0,x,y)=\rho_{0}.
\end{equation}

The primitive equations (PEs) of the atmosphere model are fundamental one of geophysical fluid dynamics (see \cite{a1987,BDL2003}). In the hierarchy of geophysical fluid dynamics models, they are situated between non-hydrostatic models and shallow water models.
The PEs are based on the so-called hydrostatic approximation, in which the conservation of momentum in the vertical direction is replaced by the hydrostatic equation. It was Richaedson \cite{rson1965} that first introduced the PEs in 1922, which consisted of the hydrodynamic thermodynamic equations with Coriolis force. Later in $1950$, Neumman and Charney \cite{nr1950} offered some simpler models such as quasi-geostrophic (QG) equations.

The PEs have attracted many attentions in fluid mechanics and applied mathematics such as \cite{gbh2011,ptw2004,ggm2001,rart2008,cske2007,zm2004,nt2011}, due to its physical importance,complexity, rich phenomena and mathematical challenges. Mathematical arguments of incompressible primitive equations were studied from 1990s. Lions, Temam and Wang first made the mathematical analysis of the PEs for atmosphere models in \cite{lions1992}. These authors have taken into account evaporation and solar heating with constant viscosities and they produced the mathematical formulation in two-and three- dimensions based on the works of J.Leray and obtained the global existence of weak solutions.
Bresch et al \cite{bd2003} got the existence and uniqueness of weak solutions for the two-dimensional hydrostatic Navier-Stokes equations. However, the uniqueness of weak solutions in three-dimensional case is still unclear.
The existence of strong solutions of the PEs have been widely studied. In \cite{zm2004} Temam and Ziane considered  the local existence of strong solutions for the PEs of the atmosphere, the ocean and the coupled atmosphere-ocean. Petcu et al. \cite{ptw2004} considered some regularity results for the two dimensional PEs with periodical boundary conditions. Cao and Titi \cite{cske2007} proved the global existence and uniqueness of
strong solutions to the three-dimensional viscous incompressible primitive equations of large scale ocean
and atmosphere dynamics. In the recent years, they also established the global existence of strong solutions for the system with only vertical diffusion or horizontal eddy diffusivity. Moreover, finite-time blowup of the solution for the inviscid primitive equation, with or without coupling to the heat equation has been shown by Cao et al.\cite{cske2015}.


In order to understand the mechanism of long-term weather prediction and climate, some mathematicians begin to study the compressible primitive equation of the atmosphere. As far as we know, the mathematical analysis to CPE is much more difficult. In the case that the viscosity coefficients are constant, Gatapov and Kazhikhov obtained a global existence of weak solutions for a two dimensional CPE by taking the classical Schauder fixed point theorem in \cite{bv2005}.
Ersoy and Ngom \cite{nt2012} showed a similar result of \cite{bv2005} for 2D compressible primitive equations with the horizontal viscosity $\nu_{1}(t,x,y)=\nu_{0} e^{-g/c^{2}y}$ for some given positive constant $\nu_{0}$ and the vertical viscosity $\nu_{2}$ any given function. From the point of view of physics, the viscosity may depend on density.
 In \cite{nt2011}   Ersoy,  Ngom and Sy derived a 3D CPE with a friction term
which is deduced from the 3D compressible and anisotropic Navier-Stokes equations by hydrostatic approximation and obtained the stability of the weak solutions. In \cite{ttg2015}, Tang and Gao studied the stability of weak solutions for two dimensional CPE model by used a new entropy estimate.
 Since the viscosity coefficient is
density-dependent which degenerates at vacuum, it is not available to obtain the estimate of the velocity itself. How to construct approximate solutions is a major and difficult issue in this case. Up to now, the global existence of weak solution for 3D CPE remains open as well.

 Recently, the global existence of weak solutions for the barotropic
compressible  Navier-Stokes equations with degenerate viscosity was proved by Vasseur and Yu  \cite{yucheng2016} and Li and Xin \cite{lx15}. Motivated by the work of Vasseur and Yu \cite{yucheng2016} and Feireisl \cite{feireisl2004},  we will investigate the global existence of weak solutions to CPE  (\ref{a1}) in this paper.  In our proof, we will first face a difficulty of how to  estimate the vertical velocity $w$ since there is no equations on it.   In order to overpass this  difficulty, we represent the vertical velocity $w$ as a  function of the density $\xi$ and the horizontal velocity $u$ and  use  Faedo-Galerkin method to prove the existence of the approximate system. Then, as mentioned above, the key issue in our proof is to construct the approximate solutions satisfying lower bound of the density and Bresch-Desjardin entropy (see \cite{BD1, dbcl2003}). Similar to  \cite{yucheng2016}, we construct the approximate solutions by adding viscosity term in the continuity equation, adding drag, cold pressure, quantum and higher derivative terms in the  momentum equation (see \eqref{b1} for details).  It should be noted that the damping term in \eqref{a1} can be dropped by proving a Mellet-Vasseur type inequality (see \cite{MV, yucheng2016,lx15}).

The rest of the paper is organized as follows. In section 2, we present some elementary inequalities and compactness theorems which will be used frequently in the whole proof. In section 3, we show the existence of global weak solutions to the approximate system by using the Faedo-Galerkin method. In section 4, we deduce the Bresch-Dejardins entropy estimates and pass to the limits as $\varepsilon, \mu\rightarrow0$. In section 5-6, by using the standard compactness arguments, we pass to the limits as $\eta\rightarrow0$ and $\kappa=\delta\rightarrow0$ step by step.
\section{Preliminaries and main result}

 In this section, we first introduce some basic inequalities needed later and then state our main result.
The following is the Gagliardo-Nirenberg inequality.
\begin{Lemma}\label{le1}(see\cite{lng1959})(Gagliardo-Nirenberg interpolation inequality)
For a function $u:\Omega\rightarrow \mathbb{R}$ defined on a bounded Lipschitz domain $\Omega\subset \mathbb{R}^{n}$,  $\forall\ 1 \leq q, r\leq \infty$ and a natural number $m$, Suppose also that a real number $\theta$ and a natural number $j $ are such that
$$\frac{1}{p}=\frac{j}{n}+(\frac{1}{r}-\frac{m}{n})\theta+\frac{1-\theta}{q}$$
and
$$\frac{j}{m}\leq \theta\leq 1,$$
then we have
$$\|D^{j}u\|_{L^{p}}\leq C_{1}\|D^{m}u\|_{L^{r}}^{\theta}\|u\|_{L^{q}}^{1-\theta}
+C_{2}\|u\|_{L^{s}}$$
where $s>0$ is arbitrary naturally, the constants $C_{1}$ and $C_{2}$ depend upon the domain $\Omega$ as well as $m,n,j,r,q,\theta$.
\end{Lemma}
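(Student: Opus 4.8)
The plan is to reduce the bounded-domain statement to the classical homogeneous inequality on $\mathbb{R}^{n}$ and then to build that up from the $L^{1}$ Sobolev inequality by an induction on the order $m$. As a preliminary bookkeeping device I would note that the exponent relation $\frac1p=\frac jn+(\frac1r-\frac mn)\theta+\frac{1-\theta}{q}$ is precisely the one forced by testing any candidate homogeneous inequality $\|D^{j}u\|_{L^{p}}\le C\|D^{m}u\|_{L^{r}}^{\theta}\|u\|_{L^{q}}^{1-\theta}$ against the rescalings $u_{\lambda}(x)=u(\lambda x)$ and letting $\lambda$ tend to $0$ and to $\infty$; this scaling heuristic tells us which exponent triples to expect at each stage and makes the induction transparent.

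First I would establish the whole-space inequality without the lower-order term for $u\in C_{c}^{1}(\mathbb{R}^{n})$. The base case $j=0,\ m=1,\ r=1$ is the Gagliardo--Nirenberg--Sobolev inequality: writing $|u(x)|\le\int_{\mathbb{R}}|\partial_{i}u|\,dt$ in each coordinate $i$, multiplying the $n$ resulting bounds and integrating with the generalized (Loomis--Whitney) H\"older inequality gives $\|u\|_{L^{n/(n-1)}}\le\prod_{i=1}^{n}\|\partial_{i}u\|_{L^{1}}^{1/n}\le C\|\nabla u\|_{L^{1}}$. Applying this to $|u|^{\gamma}$ for a suitable $\gamma>1$ and using H\"older upgrades it to $\|u\|_{L^{r^{\ast}}}\le C\|\nabla u\|_{L^{r}}$ with $1\le r<n$, $r^{\ast}=nr/(n-r)$; interpolating $L^{p}$ between $L^{r^{\ast}}$ and $L^{q}$ then yields the case $j=0,\ m=1$ for all admissible $\theta$.

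Next I would run an induction on $m$. The key mechanism is the second-order estimate obtained by one integration by parts, $\int_{\mathbb{R}^{n}}|\nabla u|^{p}\le C\int_{\mathbb{R}^{n}}|u|\,|D^{2}u|\,|\nabla u|^{p-2}$, which by H\"older gives $\|\nabla u\|_{L^{p}}^{2}\le C\|u\|_{L^{q}}\,\|D^{2}u\|_{L^{r}}$ on the admissible triples; combining this with the first-order case and the lower-order instances already obtained, and iterating, propagates the inequality to every $0\le j<m$, the admissible exponents at each stage being exactly those selected by the scaling relation so that the induction closes. I would also flag here the genuine exceptional configurations --- when $r>1$ and $m-j-n/r$ is a nonnegative integer --- in which the naive inequality must be weakened; the applications of the lemma in this paper stay away from those borderline exponents, so this is harmless for us.

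Finally, for a bounded Lipschitz domain $\Omega$ one argues by density together with a bounded extension operator $E:W^{m,r}(\Omega)\to W^{m,r}(\mathbb{R}^{n})$ with support in a fixed neighborhood of $\overline{\Omega}$: apply the whole-space inequality to $Eu$ and restrict to $\Omega$. Since $E$ is not scale-homogeneous, $\|D^{m}Eu\|_{L^{r}(\mathbb{R}^{n})}$ is controlled by $\|D^{m}u\|_{L^{r}(\Omega)}$ plus the intermediate norms $\|D^{k}u\|_{L^{r}(\Omega)}$ for $0\le k<m$; these are in turn interpolated, using the inequality just proved on $\Omega$ and Young's inequality with a small parameter, between $\|D^{m}u\|_{L^{r}}$ and $\|u\|_{L^{s}}$, the small-parameter pieces being absorbed on the left. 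This is what produces the additive term $C_{2}\|u\|_{L^{s}}$, and since $\Omega$ is bounded any $L^{s}$ norm with $s>0$ serves --- modulo a constant and a further absorbable interpolation --- in place of the $L^{q}$ norm that appears naturally. The main obstacle is exactly this last step: one must carry along every intermediate-order norm introduced by the extension and check that each error term is absorbable, so as to land on the clean two-term bound stated in the lemma rather than a sum over all lower derivatives; the whole-space part, by contrast, is a mechanical if lengthy induction once the scaling bookkeeping is fixed.
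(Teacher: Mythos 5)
The paper offers no proof of this lemma at all: it is quoted as a classical result with a citation to Nirenberg's 1959 paper, so there is no in-text argument to compare yours against other than the literature itself. Your outline is precisely the standard route: the Loomis--Whitney derivation of $\|u\|_{L^{n/(n-1)}}\le C\|\nabla u\|_{L^{1}}$, its upgrade to $\|u\|_{L^{r^{\ast}}}\le C\|\nabla u\|_{L^{r}}$ by applying it to $|u|^{\gamma}$, the second-order interpolation $\|\nabla u\|_{L^{p}}^{2}\le C\|u\|_{L^{q}}\|D^{2}u\|_{L^{r}}$ obtained by one integration by parts, an induction on $m$ guided by the scaling relation, and finally the passage to a bounded Lipschitz domain via an extension operator with absorption of the intermediate norms, which is indeed where the additive term $C_{2}\|u\|_{L^{s}}$ originates. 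Two caveats are worth recording. First, the induction step is only gestured at; tracking which exponent triples are admissible at each stage and treating the exceptional configurations ($r>1$ with $m-j-n/r$ a nonnegative integer, where $\theta=1$ must be excluded) is the bulk of Nirenberg's argument, so what you have is a correct road map rather than a complete proof. Second, your claim that any $s>0$ serves because $\Omega$ is bounded does require the further interpolation-and-absorption you mention only in passing, since for $s<q$ the inclusion $L^{q}(\Omega)\subset L^{s}(\Omega)$ goes the wrong way; one must interpolate $\|u\|_{L^{q}}$ between $\|u\|_{L^{s}}$ and a Sobolev-controlled norm and absorb by Young's inequality. For what it is worth, the single application of the lemma in this paper, the estimate $\|\nabla_{x}^{5}\xi\|_{L^{2}}\le C\|\nabla_{x}^{6}\xi\|_{L^{2}}^{9/11}\|\xi\|_{L^{3}}^{2/11}$ in Section 6, has $\theta=9/11<1$, so your remark that the borderline cases are harmless here is correct.
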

The following two compactness results are known:
\begin{Lemma}\label{le2} (see \cite{jsn1987})
Let $X_{0},X$ and $X_{1}$ be three Banach spaces with $X_{0}\subseteq X \subseteq X_{1}$. Suppose that $X_{0}$ is compactly embedded in $X$ and  $X$ is continuously embedded in $X_{1}$. For $1\leq p,q\leq +\infty$, let
$$W=\{u\in L^{p}([0,T];X_{0}) \mid \partial_{t} u\in L^{q}([0,T];X_{1})\}.$$
(I) If $p<+\infty$, then the embedding of $W$ into $L^{p}([0,T];X)$ is compact.\\
(II) If $p=+\infty$ and $q>1$, then the embedding of $W$ into $C([0,T];X)$ is compact.
\end{Lemma}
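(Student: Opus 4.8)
The statement is the Aubin--Lions--Simon compactness lemma, and the plan is to deduce both parts from the compactness of the embedding $X_0\hookrightarrow X$ by means of an interpolation (Ehrling-type) inequality that trades the strong $X_0$-norm against the weak $X_1$-norm. The first step I would carry out is to prove that for every $\eta>0$ there is $C_\eta>0$ with
$$\|u\|_X\le \eta\,\|u\|_{X_0}+C_\eta\,\|u\|_{X_1}\qquad\text{for all }u\in X_0.$$
This is an argument by contradiction: a failure produces $u_k\in X_0$ with $\|u_k\|_X=1$, $(\|u_k\|_{X_0})_k$ bounded and $\|u_k\|_{X_1}\to0$; compactness of $X_0\hookrightarrow X$ extracts $u_k\to u$ in $X$ with $\|u\|_X=1$, while continuity of $X\hookrightarrow X_1$ forces $u_k\to u$ in $X_1$, hence $u=0$, a contradiction.

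For part (I), let $(u_n)\subset W$ be bounded, so $(u_n)$ is bounded in $L^p(0,T;X_0)$ and $(\partial_t u_n)$ is bounded in $L^q(0,T;X_1)$. I would invoke the Fr\'echet--Kolmogorov compactness criterion in $L^p(0,T;X)$, which reduces relative compactness of $(u_n)$ to: (a) for all $0\le t_1<t_2\le T$ the set $\{\int_{t_1}^{t_2}u_n(s)\,ds:n\}$ is relatively compact in $X$; and (b) $\sup_n\|u_n(\cdot+h)-u_n(\cdot)\|_{L^p(0,T-h;X)}\to0$ as $h\to0^+$. Claim (a) holds because H\"older's inequality bounds $\int_{t_1}^{t_2}u_n$ uniformly in $X_0$ and $X_0\hookrightarrow X$ is compact. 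For (b) I would apply the interpolation inequality termwise in $t$,
$$\|u_n(\cdot+h)-u_n\|_{L^p(0,T-h;X)}\le \eta\,\|u_n(\cdot+h)-u_n\|_{L^p(0,T-h;X_0)}+C_\eta\,\|u_n(\cdot+h)-u_n\|_{L^p(0,T-h;X_1)};$$
the first term is $\le 2\eta\sup_n\|u_n\|_{L^p(0,T;X_0)}$, and for the second, writing $u_n(t+h)-u_n(t)=\int_0^h\partial_t u_n(t+s)\,ds$, the function $\phi_n(t):=\|u_n(t+h)-u_n(t)\|_{X_1}$ satisfies $\|\phi_n\|_{L^\infty}\le C$ and $\|\phi_n\|_{L^1(0,T-h)}\le Ch$, so $\|\phi_n\|_{L^p(0,T-h)}\le Ch^{1/p}\to0$ uniformly in $n$. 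Choosing $\eta$ small and then $h$ small yields (b), and Fr\'echet--Kolmogorov gives a subsequence convergent in $L^p(0,T;X)$.

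For part (II), now $p=+\infty$ and $q>1$. Passing to the a.e.\ representative that is absolutely continuous into $X_1$, a bounded sequence $(u_n)\subset W$ is equicontinuous from $[0,T]$ into $X_1$ since $\|u_n(t)-u_n(s)\|_{X_1}\le\int_s^t\|\partial_t u_n\|_{X_1}\le|t-s|^{1-1/q}\|\partial_t u_n\|_{L^q(0,T;X_1)}$, where $1-1/q>0$ is exactly what $q>1$ provides. Combining this H\"older-in-time bound with the interpolation inequality and the uniform bound of $(u_n)$ in $L^\infty(0,T;X_0)$ upgrades equicontinuity into $X$. Since for each fixed $t$ the set $\{u_n(t):n\}$ lies in a bounded subset of $X_0$, it is relatively compact in $X$, so the Arzel\`a--Ascoli theorem for $C([0,T];X)$ yields a subsequence converging in $C([0,T];X)$.

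The step I expect to require the most care is the interpolation inequality, together with, in part (II), checking that the continuous-in-time representative has all its pointwise values $u_n(t)$ trapped in one fixed compact subset of $X$ (use that the $X$-closure of a bounded subset of $X_0$ is compact, and approximate a general $t$ by "good" times); one should also note that for $q=1$ the modulus-of-continuity argument in part (II) breaks down, which is precisely why only the $L^p$-in-time statement (I) survives in that case.
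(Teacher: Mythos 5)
Your proposal is correct, but note that the paper offers no proof of this lemma at all: it is quoted verbatim from Simon \cite{jsn1987} as a known compactness result. What you have written is essentially Simon's own argument — the Ehrling interpolation inequality, the translation/Fr\'echet--Kolmogorov criterion in $L^{p}(0,T;X)$ for part (I), and equicontinuity into $X_1$ upgraded via interpolation plus Arzel\`a--Ascoli for part (II) — and the details you flag as delicate (the a.e.\ representative, trapping all pointwise values $u_n(t)$ in a fixed compact subset of $X$, and the failure of the modulus-of-continuity bound when $q=1$) are exactly the right ones; the estimates $\|\phi_n\|_{L^\infty}\leq C$, $\|\phi_n\|_{L^1(0,T-h)}\leq Ch$ and the resulting $O(h^{1/p})$ bound all check out.
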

\begin{Lemma}\label{le3} Suppose that $\{f_{n}\}\subset L^{p}(\Omega)$ and $\|f_{n}\|_{L^{p}}\leq C,C\ is\  constant.$ $f_{n}\rightarrow f$a.e.in $\Omega$, a bounded measurable set in $\mathbb{R}^{n}$,Then

(I) $f\in L^{p}(\Omega)$\ and\ $\|f\|_{L^{p}}\leq C$.\\
(II) $f_{n}\rightarrow f \ \ strongly\ \ in\ L^{\overline{p}}$,\ \ for\ any $\overline{p}\in [1,p)$.
\begin{proof}
Thanks to Fatou's lemma, we can prove $(I)$.

Next,
since $f_{n}\rightarrow f$ a.e. in $\Omega$, $f_{n}$ is uniformly bounded in $L^{p} (\Omega)$, and\ $mes\Omega\ <\infty.$  Due to the Lebesgue theorem, and applying H\"{o}lder inequality, we have
$$ \forall \ \epsilon>0,\ \ \Omega_{n}=\{x\in\Omega;\mid f_{n}(x) -f(x)\mid\geq \epsilon\}, \ mes\Omega_{n}\rightarrow  0 \ (\ n\rightarrow\infty)$$
and
\begin{equation*}
\begin{aligned}
\int_{\Omega}|f_{n}-f|^{\overline{p}}dx&=\int_{\Omega_{n}}
|f_{n}-f|^{\overline{p}}dx+\int_{\Omega-\Omega_{n}}|f_{n}-f|^{\overline{p}}dx\\
&\leq (mes\Omega_{n})^{1-\frac{\overline{p}}{p}}\parallel f_{n}- f\parallel_{p}^{\overline{p}}+\epsilon^{\overline{p}}mes\Omega
\rightarrow 0.
\end{aligned}
\end{equation*}
\end{proof}
\end{Lemma}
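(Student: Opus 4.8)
The plan is to prove (I) via Fatou's lemma and (II) via the standard "convergence in measure plus uniform integrability" splitting argument, the key structural fact being that $\mathrm{mes}\,\Omega<\infty$. I would establish (I) first: since $f_n\to f$ a.e. in $\Omega$, the nonnegative functions $|f_n|^p$ converge a.e. to $|f|^p$, so Fatou's lemma gives $\int_\Omega|f|^p\,dx\le\liminf_{n\to\infty}\int_\Omega|f_n|^p\,dx\le C^p$; hence $f\in L^p(\Omega)$ with $\|f\|_{L^p}\le C$. Doing this first is important because part (II) will need the uniform bound $\|f_n-f\|_{L^p}\le\|f_n\|_{L^p}+\|f\|_{L^p}\le 2C$, which is only available once $f$ is known to lie in $L^p$.

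For (II), fix $\bar p\in[1,p)$ and $\epsilon>0$, and set $\Omega_n=\{x\in\Omega:|f_n(x)-f(x)|\ge\epsilon\}$. Since $\mathrm{mes}\,\Omega<\infty$, a.e. convergence implies convergence in measure, so $\mathrm{mes}\,\Omega_n\to0$ as $n\to\infty$. Then I would split
$$\int_\Omega|f_n-f|^{\bar p}\,dx=\int_{\Omega_n}|f_n-f|^{\bar p}\,dx+\int_{\Omega\setminus\Omega_n}|f_n-f|^{\bar p}\,dx,$$
bound the second term by $\epsilon^{\bar p}\,\mathrm{mes}\,\Omega$, and for the first term apply H\"older's inequality with exponents $p/\bar p$ and $p/(p-\bar p)$ to get $\int_{\Omega_n}|f_n-f|^{\bar p}\,dx\le(\mathrm{mes}\,\Omega_n)^{1-\bar p/p}\|f_n-f\|_{L^p}^{\bar p}\le(2C)^{\bar p}(\mathrm{mes}\,\Omega_n)^{1-\bar p/p}\to0$. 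Hence $\limsup_{n\to\infty}\int_\Omega|f_n-f|^{\bar p}\,dx\le\epsilon^{\bar p}\,\mathrm{mes}\,\Omega$, and since $\epsilon>0$ is arbitrary, $f_n\to f$ strongly in $L^{\bar p}(\Omega)$.

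I do not expect any genuine obstacle here: the statement is a routine consequence of Fatou's lemma together with the finite-measure Vitali/Lebesgue convergence theorem. The only points requiring a little care are the ordering of the two parts (so that the uniform $L^p$ bound on $f_n-f$ is legitimate when proving (II)) and the essential use of $\mathrm{mes}\,\Omega<\infty$, which is what converts a.e.\ convergence into convergence in measure and also keeps the quantity $\epsilon^{\bar p}\,\mathrm{mes}\,\Omega$ finite; the conclusion of (II) is false without this hypothesis. An alternative proof running through Egorov's theorem would work equally well but is slightly less direct than the splitting above.
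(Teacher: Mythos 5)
Your proof is correct and follows essentially the same route as the paper: Fatou's lemma for (I), then the splitting of $\Omega$ into $\Omega_{n}=\{|f_{n}-f|\geq\epsilon\}$ and its complement with H\"older's inequality on the first piece. Your write-up is in fact slightly more careful than the paper's, since you make explicit the $\limsup\leq\epsilon^{\overline{p}}\,\mathrm{mes}\,\Omega$ step and the final passage $\epsilon\to0$, which the paper compresses into a single limit.
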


Before we state our main result, we  reformulate the system \eqref{a1} as follows. For simplicity, we assume $g=c^{2}=1$ in \eqref{a1}. From the hydrostatics equation
$$\rho(t,x,y)=\xi(t,x)e^{-y}$$
where $\xi(t,x)$ is an  unknown function. Following Ersoy and Ngom \cite{nt2012}, we set
$$
z=1-e^{-y},\ \ \ w(t,x,y)=e^{-y} v(t,x,y)
$$
and assume
$$
\nu_{1}(t,x,y)=\overline{\nu}_{1}\rho(t,x,y),\ \ \ \nu_{2}(t,x,y)=\overline{\nu}_{2}\rho(t,x,y)e^{2y}\ \ with\ \ \overline{\nu}_{i}>0.
$$
Then, the equations (\ref{a1}) reduce to
\begin{equation}\label{a4}
\left\{\begin{array}{lll}
\partial_{t}\xi+{\rm div}_{x}(\xi u)+\partial_{z}(\xi w)=0,\\
\partial_{t}(\xi u)+{\rm div}_{x}(\xi u\otimes u)+\partial_{z}(\xi uw)+\nabla_{x}\xi+r\xi |u|u\\
=2\overline{\nu}_{1}{\rm div}_{x}(\xi D_{x}(u))+\overline{\nu}_{2}\partial_{z}(\xi \partial_{z}u),\\
\partial_{z}\xi=0.
\end{array}\right.
\end{equation}
Here $t>0,(x,z)\in \Omega=\{x\in\Omega_{x},0<z<1-e^{-H}\}$, where $\Omega_{x}=T^{2}$ is the bi-dimensional torus.

Correspondingly, the boundary conditions have the form
\begin{equation}\label{a5}
  \begin{aligned}
  & periodic\ condtions\ on \ \partial\Omega_{x},\\
  &w_{|z=0}=w_{|z=h}=0,\\
  &\partial_{z}u|_{z=0}=\partial_{z}u|_{z=h}=0.
  \end{aligned}
  \end{equation}
and the initial data can be imposed as
\begin{equation}\label{a6}
 \xi u(0,x,y)=m_{0}(x,z), \ \
 \xi(0,x)=\xi_{0}(x).
\end{equation}
where $h=1-e^{-H}$ ,\ $ m_{0}(x,z)=\widetilde{m}_{0}(x,z)/(1-z)$ and $\xi_{0}(x)$ is a bounded non-negative function
$$
0\leq\xi_{0}(x)\leq M <+\infty.
$$

We consider system (\ref{a4}) with the following initial conditions
 \begin{equation}\label{a7}
  \begin{aligned}
  &\xi_{0}\in L^{1}(\Omega),\ \ \xi_{0}\ln\xi_{0}-\xi_{0}+1\in L^{1}(\Omega),\ \  \xi_{0}\geq 0,\ \ \nabla_{x}\sqrt{\xi_{0}}\in L^{2}(\Omega), \\
  & m_{0}=0\ \ if\ \ \xi_{0}=0,\ \ \ \frac{|m_{0}|^{2}}{\xi_{0}}\in L^{1}(\Omega).
  \end{aligned}
  \end{equation}
The defininition of the weak solutions is presented as follows.
\begin{Definition}\label{def1}

We call $(\xi, u,w)$ a weak solution to  the problem (\ref{a4})-(\ref{a6}) provided that
\begin{enumerate}
  \item $\xi$, $u$, $w$ belong to the classes
  \begin{equation}\label{a8}
  \left\{\begin{array}{lll}
  \xi\in L^{\infty}((0,T);L^{3}(\Omega)), \ \sqrt{\xi}u\in L^{\infty}((0,T);L^{2}(\Omega)),\\
  \sqrt{\xi}\in L^{\infty}((0,T);H^{1}(\Omega)),\ \ \x^{\frac{1}{3}}u\in L^{2}((0,T);L^{2}(\Omega)),\\
  \sqrt{\xi}\nabla_{x} u\in L^{2}((0,T);L^{2}(\Omega)),\ \ \sqrt{\xi}w\in L^{2}((0,T);L^{2}(\Omega)),\\ \sqrt{\xi}\partial_{z}w\in L^{2}((0,T);L^{2}(\Omega)),\ \ \sqrt{\xi}\partial_{z}u\in L^{2}((0,T);L^{2}(\Omega))
  \end{array}\right.
  \end{equation}
  \item
  The equations (\ref{a4}) hold in the sense of $\mathcal{D}^{'}((0,T)\times\Omega)$,
  \item
  (\ref{a6}) holds in $\mathcal{D}^{'}(\Omega)$.
\end{enumerate}
\end{Definition}
Formally, multiplying the momentum equation $(\ref{a4})_{2}$ by $u$ and integrating by parts we can deduce the following energy inequality
\begin{equation}\label{a9}
\begin{split}
&\frac{d}{dt}\int_{\Omega}(\xi\frac{u^{2}}{2}+(\xi \ln\xi-\xi+1))dxdz+\int_{\Omega}\xi(2\overline{\nu}_{1}|D_{x}(u)|^{2}+\overline{\nu}_{2}|\partial_{z}u|^{2})dxdz\\
&+r\int_{\Omega}\xi|u|^{3}dxdz\leq0.
\end{split}
\end{equation}
To deal with the unknown vertical velocity  $w$, we represent it as a function of a function of the density and the  horizontal velocity. Differentiating $(\ref{a4})_{1}$ with respect to $z$, we obtain
 \begin{equation}\label{a10}
\begin{split}
 &\partial_{zz}w=-\frac{\partial_{z}{\rm div}_{x}(\xi u)}{\xi},\\
 &w\mid_{z=0}=w\mid_{z=h}=0.
\end{split}
\end{equation}
Solving \eqref{a10} yields
\begin{equation}\label{a11}
w(z)=-\frac{{\rm div}_{x}(\xi\widetilde{u}(z))}{\xi}+z\frac{{\rm div}_{x}(\xi\overline{u})}{\xi}.
\end{equation}
Then
\begin{equation}\label{a12}
\partial_{z}w(z)=-\frac{{\rm div}_{x}(\xi u(z))}{\xi}+\frac{{\rm div}_{x}(\xi\overline{u})}{\xi},
\end{equation}
where
\begin{equation*}
\widetilde{u}(z)=\int^{z}_{0}u(\tau)d\tau,\ \ \ \ \ \ \ \overline{u}=\frac{1}{h}\int^{h}_{0}u(\tau)d\tau.
\end{equation*}
Substitute (\ref{a11})-(\ref{a12}) into  (\ref{a4}) to arrive
\begin{equation}\label{a13}
\left\{\begin{array}{lll}
\partial_{t}\xi+{\rm div}_{x}(\xi \overline{u})=0,\\
\partial_{t}(\xi u)+{\rm div}_{x}(\xi u\otimes u)+\partial_{z}(\xi u(-\dfrac{{\rm div}_{x}(\xi\widetilde{u}(z))}{\xi}+z\dfrac{{\rm div}_{x}(\xi\overline{u})}{\xi}))+\nabla_{x}\xi+r\xi |u|u\\
=2\overline{\nu}_{1}{\rm div}_{x}(\xi D_{x}(u))+\overline{\nu}_{2}\partial_{z}(\xi \partial_{z}u),\\
\partial_{z}\xi=0.
\end{array}\right.
\end{equation}

However, the above energy estimate (\ref{a9}) is not enough to prove the stability of the weak solutions $(\xi,u,w)$ of (\ref{a4}), we will obtain the following Bresch-Desjardins  entropy:
\begin{equation}\label{a14}
  \begin{split}
  &\int_{\Omega}\frac{1}{2}\xi (u+2\overline{\nu}_{1}\frac{\nabla_{x} \xi}{\xi})^{2}+(\xi \ln\xi-\xi+1) \ dxdz+
  \int^{T}_{0}\int_{\Omega}8\overline{\nu}_{1}|\nabla_{x}\sqrt{\xi}|^{2}dxdzdt\\
  &+\int^{T}_{0}\int_{\Omega}2\overline{\nu}_{1}\xi|\partial_{z}w|^{2}
  +\overline{\nu}_{2}\xi|\partial_{z}u|^{2}
  +\overline{\nu}_{1}\xi|A_{x}(u)|^{2}dxdzdt
  +r\int_{0}^{T}\int_{\Omega}| u|^{3} dxdt \\
  &\leq\int_{\Omega}\frac{1}{2}\xi_{0} (u_{0}+2\overline{\nu}_{1}\frac{\nabla_{x} \xi_{0}}{\xi_{0}})^{2}+(\xi_{0} \ln\xi_{0}-\xi_{0}+1) \ dxdz +C,
  \end{split}
  \end{equation}
  where C is bounded by the initial energy.

Now we state our result as follows.
\begin{Theorem}\label{theorem1}
Suppose that the initial data satisfies (\ref{a7}). Then, for any  $T>0$, (\ref{a4})-(\ref{a6}) has a weak solution $(\xi, u,w)$ in the sense of Definition \ref {def1} satisfying (\ref{a9}) and (\ref{a14}).
\end{Theorem}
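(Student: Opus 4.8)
The plan is to construct the weak solution of \eqref{a4}--\eqref{a6} through a multi-level approximation scheme, following the scheme of Vasseur--Yu \cite{yucheng2016} adapted to the hydrostatic structure, and then to pass to the limit in the parameters one by one. At the top level I would introduce an approximate system (call it \eqref{b1} as announced in the introduction) obtained from \eqref{a13} by: (i) regularizing the continuity equation with an artificial viscosity term $\varepsilon\Delta_x\xi$; (ii) adding a drag term $\mu u$ and a ``cold pressure'' term $\kappa\nabla_x\xi^{-s}$ (for suitable $s$) to keep the density bounded away from vacuum; (iii) adding a Bresch--Desjardins-compatible capillarity/quantum term $\delta\xi\nabla_x\bigl(\Delta_x\sqrt\xi/\sqrt\xi\bigr)$; and (iv) adding a higher-order dissipation $\eta$-term (e.g. $\eta\Delta_x^2 u$ or $\eta\,\mathrm{div}_x(\xi|\nabla_x u|^2\nabla_x u)$) to make the Galerkin method work. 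Throughout, the vertical velocity $w$ is \emph{not} an independent unknown: it is defined by the explicit formula \eqref{a11} in terms of $\xi$ and $u$, with $\partial_z w$ given by \eqref{a12}, so the whole system is closed in $(\xi,u)$ alone. This is the device that lets the Faedo--Galerkin method apply, since the momentum equation becomes a (non-local in $x$, because of the $\mathrm{div}_x/\xi$ factors, but otherwise standard) quasilinear parabolic system for $u$.

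\textbf{Level 1: Galerkin existence for the full approximate system.} Fix $\varepsilon,\mu,\kappa,\delta,\eta>0$. First solve the regularized continuity equation $\partial_t\xi+\mathrm{div}_x(\xi\overline u)=\varepsilon\Delta_x\xi$ for $\xi$ given $u$ (hence given $\overline u$); the maximum principle gives, with the cold-pressure term feeding back, a uniform lower bound $\xi\ge c(\kappa,\delta)>0$ and an upper bound, on any time interval. Then project the momentum equation onto a finite-dimensional space spanned by smooth basis functions of $x$ (with the natural Neumann-type conditions in $z$) and solve the resulting ODE system by Schauder's fixed point, using the $\eta$-term to control the top-order norm of $u$ and the strict positivity of $\xi$ to invert the mass matrix. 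A fixed-point/continuation argument in the coupled variable $(\xi,u)$, using Lemma \ref{le2} for compactness of the iteration map, yields a local-in-time solution; the energy inequality \eqref{a9} (augmented by the $\mu,\kappa,\delta,\eta$ terms) gives the a priori bound needed to continue to arbitrary $T$. At this stage one also verifies that $w$ defined by \eqref{a11} lies in the right spaces and that $(\xi,u,w)$ solves \eqref{a4} with the extra terms.

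\textbf{Level 2: the Bresch--Desjardins entropy and passage $\varepsilon,\mu\to0$.} The crucial estimate is the BD entropy \eqref{a14}. I would derive it by testing the momentum equation with $u+2\overline\nu_1\nabla_x\xi/\xi$ (equivalently, combining the energy identity with the identity obtained by applying $\nabla_x$ to the continuity equation and pairing with $\nabla_x\xi/\xi$), exploiting the algebraic identity $2\,\mathrm{div}_x(\xi D_x(u))=\xi\Delta_x u+\xi\nabla_x\mathrm{div}_x u+2\nabla_x\xi\cdot D_x(u)$ and the cancellation $|\nabla u|^2=|D_x(u)|^2+\tfrac14|A_x(u)|^2$ with $A_x(u)=\nabla_x u-\nabla_x u^{\mathrm T}$, which is exactly what produces the $\overline\nu_1\xi|A_x(u)|^2$ term in \eqref{a14}. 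The hydrostatic terms involving $w$ through \eqref{a11}--\eqref{a12} must be handled carefully: here one uses that $\partial_z w=-\mathrm{div}_x(\xi u)/\xi+\mathrm{div}_x(\xi\overline u)/\xi$, rewrites the $\partial_z$-flux contributions after integration by parts in $z$, and absorbs them using the $\sqrt\xi\partial_z u$ and $\sqrt\xi\partial_z w$ bounds already available from energy. With \eqref{a9} and \eqref{a14} in hand uniformly in $\varepsilon,\mu$, one gets $\nabla_x\sqrt\xi\in L^\infty_t L^2_x$, hence $\xi\in L^\infty_t L^3$ in 2D-in-$x$ by Gagliardo--Nirenberg (Lemma \ref{le1}), strong convergence of $\sqrt\xi$ via Lemma \ref{le2}, and then strong convergence of $\sqrt\xi u$ by the usual argument (bounds on $\partial_t(\xi u)$ from the equation plus the compactness lemmas, with Lemma \ref{le3} to upgrade a.e. convergence to strong $L^p$). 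The $\varepsilon\Delta_x\xi$ and $\mu u$ terms vanish in the limit.

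\textbf{Levels 3 and 4: passage $\eta\to0$ and $\kappa=\delta\to0$.} The remaining two limits are standard compactness arguments once the BD entropy survives each level. For $\eta\to0$ one discards the higher-derivative regularization; for $\kappa=\delta\to0$ one discards the cold pressure and the capillarity term simultaneously (they are tuned so that the BD estimate is uniform). The main obstacle at these stages — and indeed the heart of the whole proof — is the strong convergence of $\sqrt\xi u$ (and of the nonlinear flux terms $\xi u\otimes u$ and the hydrostatic flux $\xi u w$) in the absence of any bound on $u$ or $\nabla u$ themselves because of the degeneracy at vacuum; I expect this, together with the delicate treatment of the non-local vertical-velocity terms \eqref{a11}--\eqref{a12} inside the BD computation, to be where the real work lies. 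The loss of the drag term $r\xi|u|u$ being optional is remarked (via a Mellet--Vasseur-type inequality \cite{MV}) but kept here for simplicity, so no extra limit is needed on $r$. Putting the four limits together yields a solution $(\xi,u,w)$ in the class \eqref{a8} satisfying \eqref{a9} and \eqref{a14}, which is precisely the assertion of Theorem \ref{theorem1}. $\hfill\square$
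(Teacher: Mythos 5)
Your overall architecture coincides with the paper's: express $w$ through \eqref{a11}--\eqref{a12} so the system closes in $(\xi,u)$, run a Faedo--Galerkin scheme on a multiply-regularized system, derive the Bresch--Desjardins entropy by testing with $u+2\overline\nu_1\nabla_x\xi/\xi$, and remove the parameters in stages ($n\to\infty$, then $\varepsilon,\mu\to0$, then the cold pressure, then the quantum/capillarity and drag terms). Up to a relabelling of the parameters, this is the paper's proof.

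There is, however, one concrete gap: your mechanism for keeping the density bounded away from vacuum uniformly in the approximation. You write that ``the maximum principle gives, with the cold-pressure term feeding back, a uniform lower bound $\xi\ge c(\kappa,\delta)>0$.'' The cold pressure lives only in the momentum equation and does not enter the parabolic maximum principle for $\partial_t\xi+\mathrm{div}_x(\xi\overline u)=\varepsilon\Delta_x\xi$; the maximum principle only yields $\xi\ge\underline\xi\,e^{-\int_0^T\|\mathrm{div}_x\overline u\|_{L^\infty}dt}$, which degenerates as the Galerkin dimension $n\to\infty$ and certainly as $\varepsilon,\mu\to0$. The uniform lower bound in the paper comes instead from combining the energy bound $\eta\,\xi^{-10}\in L^\infty(0,T;L^1)$ (equivalently $\nabla_x\xi^{-5}\in L^2$) with \emph{high Sobolev regularity of the density}, via the inequality $\|\xi^{-1}\|_{L^\infty}\le C(1+\|\xi^{-1}\|_{L^3})^3(1+\|\xi\|_{H^{k+2}}^2)$, $k\ge 3/2$. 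That regularity is supplied by an additional tenth-order regularization $\delta\,\xi\nabla_x\Delta_x^5\xi$ in the momentum equation, which yields $\xi\in L^\infty(0,T;H^5)\cap L^2(0,T;H^6)$ uniformly in $n,\varepsilon,\mu$ and is absent from your scheme (your higher-order $\eta$-term acts on the velocity, and the quantum term only controls $\sqrt\xi$ in $H^1$/$H^2$, which is not enough). Without some such high-order density regularization you cannot justify $\|\xi^{-1}\|_{L^\infty}\le C(\eta,\delta)$, hence cannot legitimately use $\nabla_x\xi/\xi$ as a test function to derive \eqref{a14}, and you also lose the strong compactness $\xi\to\xi$ in $C([0,T];H^5)$ that the paper uses to pass to the limit in the nonlinear and nonlocal terms. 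Adding that term (and then removing it together with $\kappa$ at the last stage, where its contribution is shown to vanish by interpolation) closes the gap and reproduces the paper's argument.
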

\begin{Remark}
  The expression $w(z)=-\frac{{\rm div}_{x}(\xi\widetilde{u}(z))}{\xi}+z\frac{{\rm div}_{x}(\xi\overline{u})}{\xi}$ is crucial to allow us to use the Faedo-Galerkin method in Section 3.
\end{Remark}
\begin{Remark}
 The damping term $\xi|u|u$ here is used to give the strong convergence of $\sqrt{\xi}u$ in $L^{2}(0,T;L^{2}(\Omega)$. Following the approach in \cite{yucheng803}, we can similarly deduce the Mellet-Vasseur inequality for weak solutions, and the damping term can be removed.
\end{Remark}
\begin{Remark}
If we assume that the viscous $\nu_{i}(t,x,y)=\overline{\nu}_{i}\rho(t,x,y),\ \ i=1,2$ we can also obtain the existence of weak solutions to the three-dimensional compressible
primitive equations. In this case,  $1-z$ is bounded  away from below with a positive constant.
\end{Remark}
\begin{Theorem}\label{theorem2}
Under similar argument to Theorem \ref{theorem1},   (\ref{a1})-(\ref{a3}) has a weak solution $(\xi, u,v)$ in the sense of Definition \ref {def1} if we replace $(\xi,u,w)$ by $(\rho,u,v)$.
\end{Theorem}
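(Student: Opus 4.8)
The plan is to deduce Theorem \ref{theorem2} from the \emph{proof} of Theorem \ref{theorem1}, using the same change of unknowns that reduced $(\ref{a1})$ to $(\ref{a4})$, and observing that under the more natural scaling $\nu_{i}(t,x,y)=\overline{\nu}_{i}\rho(t,x,y)$, $i=1,2$, the reduced system differs from $(\ref{a4})$ only by a smooth, uniformly positive and bounded weight in the vertical variable. Concretely, I would perform the substitution $\rho(t,x,y)=\xi(t,x)e^{-y}$, $z=1-e^{-y}$, $w(t,x,y)=e^{-y}v(t,x,y)$ exactly as in Section 2, now with $\nu_{1}=\overline{\nu}_{1}\rho$ and $\nu_{2}=\overline{\nu}_{2}\rho$. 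Using $\partial_{y}=(1-z)\partial_{z}$ (so $\partial_{y}u=(1-z)\partial_{z}u$), the hydrostatic relation $\partial_{z}\xi=0$, and dividing the continuity and momentum equations by $e^{-y}=1-z$, one checks that $(\ref{a1})$ becomes
\[
\left\{\begin{array}{l}
\partial_{t}\xi+{\rm div}_{x}(\xi u)+\partial_{z}(\xi w)=0,\\
\partial_{t}(\xi u)+{\rm div}_{x}(\xi u\otimes u)+\partial_{z}(\xi uw)+\nabla_{x}\xi+r\xi|u|u\\
\quad=2\overline{\nu}_{1}{\rm div}_{x}(\xi D_{x}(u))+\overline{\nu}_{2}\partial_{z}\bigl(\xi(1-z)^{2}\partial_{z}u\bigr),\\
\partial_{z}\xi=0,
\end{array}\right.
\]
which is precisely $(\ref{a4})$ with the constant $\overline{\nu}_{2}$ replaced by the weight $\widetilde{\nu}_{2}(z):=\overline{\nu}_{2}(1-z)^{2}=\overline{\nu}_{2}e^{-2y}$. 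Since $1-z=e^{-y}\in(e^{-H},1)$ for $y\in(0,H)$, we have $\overline{\nu}_{2}e^{-2H}\le\widetilde{\nu}_{2}(z)\le\overline{\nu}_{2}$ and $\widetilde{\nu}_{2}\in C^{\infty}([0,h])$, so $\widetilde{\nu}_{2}$ is smooth, bounded and uniformly positive. The boundary conditions $(\ref{a2})$ are equivalent to $(\ref{a5})$ because $w=(1-z)v$ and $\partial_{y}u=(1-z)\partial_{z}u$ with $1-z$ bounded away from $0$; the initial data $(\ref{a3})$ and the assumptions $(\ref{a7})$ are unchanged; and, the continuity equation being identical to $(\ref{a4})_{1}$, the representation $(\ref{a11})$--$(\ref{a12})$ of $w$ and the reduced form $(\ref{a13})$ carry over verbatim.

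Next I would re-run the entire scheme of Theorem \ref{theorem1} on this reduced system: the Faedo--Galerkin construction of Section 3 based on $(\ref{a11})$, the lower bound for the density, the energy inequality $(\ref{a9})$, the Bresch--Desjardins entropy $(\ref{a14})$, the compactness arguments, and the successive limits $\varepsilon,\mu\to0$, $\eta\to0$, $\kappa=\delta\to0$. The only modification is that every occurrence of the vertical dissipation $\overline{\nu}_{2}\xi|\partial_{z}u|^{2}$ (and the corresponding flux $\overline{\nu}_{2}\xi\partial_{z}u$) is replaced by $\widetilde{\nu}_{2}(z)\xi|\partial_{z}u|^{2}$ (resp.\ $\widetilde{\nu}_{2}(z)\xi\partial_{z}u$). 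Because $\widetilde{\nu}_{2}$ is smooth the integrations by parts in $z$ are unaffected, and because $\widetilde{\nu}_{2}\ge\overline{\nu}_{2}e^{-2H}>0$ this term still provides a nonnegative dissipation controlling $\|\sqrt{\xi}\,\partial_{z}u\|_{L^{2}((0,T)\times\Omega)}$, now with a constant depending additionally on $H$; hence all the a priori bounds of Theorem \ref{theorem1} are preserved and one obtains a weak solution $(\xi,u,w)$ of the reduced system together with the associated energy and BD entropy inequalities.

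Finally I would undo the change of variables, setting $\rho(t,x,y)=\xi(t,x)e^{-y}$ and $v(t,x,y)=w(t,x,z(y))/(1-z(y))$. Since $z\mapsto y$ is a smooth diffeomorphism of $(0,h)$ onto $(0,H)$ with $dy/dz=e^{y}\in(1,e^{H})$, and $e^{\pm y}$ are bounded on this range, $\rho$ and $v$ inherit from $\xi$ and $w$ all the integrability required in Definition \ref{def1} (with the domain $\Omega$ replaced by $\widetilde{\Omega}$ and $(\rho,u,v)$ in place of $(\xi,u,w)$); testing the weak formulation of the reduced system against test functions of the form $e^{-y}\varphi$ and changing variables $z\leftrightarrow y$ in the integrals recovers exactly the weak formulation of $(\ref{a1})$--$(\ref{a3})$, so $(\rho,u,v)$ is the asserted weak solution.

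The step I expect to require the most attention — and the only place where one might fear an obstruction — is verifying that the $z$-dependent weight $(1-z)^{2}$ does not spoil the algebra underlying the Bresch--Desjardins entropy. It does not: that computation is driven by testing the momentum equation against a regularization of $\nabla_{x}\log\xi$, a purely \emph{horizontal} quantity, and by the hydrostatic relation $\partial_{z}\xi=0$ the weight stays outside every $x$-derivative; the vertical viscosity thus contributes only the harmless extra dissipation $\overline{\nu}_{2}\int\xi(1-z)^{2}|\partial_{z}u|^{2}$ and never couples to the $\nabla_{x}\sqrt{\xi}$ terms. Consequently no genuinely new estimate is needed, and the proof of Theorem \ref{theorem2} reduces to that of Theorem \ref{theorem1}.
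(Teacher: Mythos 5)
Your proposal is correct, but it does noticeably more than the paper's own proof, because you have (reasonably, given the ambiguous statement) read Theorem \ref{theorem2} as concerning the viscosity law $\nu_{i}=\overline{\nu}_{i}\rho$ for \emph{both} $i=1,2$ --- which is actually the content of Remark 2.3 --- whereas the paper proves Theorem \ref{theorem2} for the viscosities fixed in Section 2, namely $\nu_{1}=\overline{\nu}_{1}\rho$ and $\nu_{2}=\overline{\nu}_{2}\rho e^{2y}$. Under the paper's choice the reduced system is \emph{exactly} (\ref{a4}), so no re-running of the approximation scheme is needed: the paper's entire proof of Theorem \ref{theorem2} is your third paragraph, i.e.\ set $\rho=\xi e^{-y}$, $v=e^{y}w$, change variables $z=1-e^{-y}$ in all space--time integrals, and check (using $e^{-H}\le 1-z\le 1$ together with the Poincar\'e inequality in $z$ for $w$, which vanishes at $z=0$) that every norm in Definition \ref{def1}, the energy inequality and the B--D entropy transfers with constants depending only on $H$; in particular $\|\sqrt{\rho}\,v\|_{L^{2}}^{2}=\int \xi w^{2}(1-z)^{-2}$ and $\|\sqrt{\rho}\,\partial_{y}v\|_{L^{2}}^{2}\le 2\int(\xi|\partial_{z}w|^{2}+\xi w^{2}(1-z)^{-2})$ are controlled by $\int\xi|\partial_{z}w|^{2}$. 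Your additional work --- deriving the reduced system with the weight $\widetilde{\nu}_{2}(z)=\overline{\nu}_{2}(1-z)^{2}$ and re-running the Faedo--Galerkin construction, energy and B--D estimates with it --- is sound (the weight is smooth, bounded above and below, stays outside all $x$-derivatives because $\partial_{z}\xi=0$, and the boundary terms in $z$ still vanish by the Neumann condition), and it is precisely what is needed to substantiate Remark 2.3; but for Theorem \ref{theorem2} as the paper proves it, that step is superfluous. The one small inaccuracy is your claim that the initial data are ``unchanged'': the momentum transforms as $m_{0}=\widetilde{m}_{0}/(1-z)$, though this is again harmless since $1-z$ is bounded away from zero.
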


\section{Faedo-Galerkin approximation}
In this section, we prove the existence of solutions to the approximate system of  the original problem (\ref{a4})-(\ref{a6}) by using the Faedo-Galerkin method, similar to [\cite{feireisl2004}, Chapter. 7] and \cite{aji2010,yucheng2016}.
\subsection{Approximate system}
In order to prove the global existence of weak solutions for the compressible primitive equations, we consider the following approximate system:
\begin{equation}\label{b1}
\left\{\begin{array}{lll}
\partial_{t}\xi+{\rm div}_{x}(\xi \overline{u})=\varepsilon\Delta_{x}\xi,\\
\partial_{t}(\xi u)+{\rm div}_{x}(\xi u\otimes u)+\partial_{z}(\xi uw)+\nabla_{x}\xi+r_{0} u+r\xi |u|u+\mu\Delta^{2}u\\
=2\overline{\nu}_{1}{\rm div}_{x}(\xi D_{x}(u))+\overline{\nu}_{2}\partial_{z}(\xi \partial_{z}u)+\varepsilon\nabla_{x}\cdot\nabla_{x}u+
\eta\nabla_{x}\xi^{-10}+\\ \ \ \ \kappa\xi\nabla_{x}(\dfrac{\Delta_{x}\sqrt{\xi}}{\sqrt{\xi}})+\delta\xi\nabla_{x}\Delta_{x}^{5}\xi\\
\partial_{z}\xi=0.
\end{array}\right.
\end{equation}
The extra terms $\eta\nabla\xi^{-10}$ and $\delta\xi\nabla_{x}\Delta_{x}^{5}\xi$ are  necessary to keep the density bounded, and bounded away from below with a positive constant for all the time. This enables us to take $\frac{\nabla\xi}{\xi}$ as a test function to derive the B-D entropy. Moreover, the term $r_{0}u$ is used to control the density near the vacuum, $\xi|u|u$ is used to make sure that $\sqrt{\xi}u$ is strong convergence in $L^{2}(0,T;L^{2}(\Omega))$ at the last approximation level.

 For $T>0$, we define a finite-dimensional space $X_{n}$=span$\{\psi_{1},...\psi_{n}\}$,\ $n\in\mathbb{N}$, where $\psi_{n}$ is the eigenfunctions of the Laplacian:
 \begin{equation*}
  \begin{aligned}
  &-\Delta\psi_{n}=\lambda_{n}\psi_{n}, \ \  on \ \Omega,\\
  & periodic\ condtions\ on \ \partial\Omega_{x},\\
  &\partial_{z}\psi_{n}|_{z=0}=\partial_{z}\psi_{n}|_{z=h}=0.
  \end{aligned}
  \end{equation*}
 Here $ \{\psi_{i}\}$ is an orthonormal basis of $L^{2}(\Omega)$ which is also an orthogonal basis of $H^{1}(\Omega)$. Let $(\xi_{0},u_{0})\in C^{\infty}(\Omega)$ be some initial data satisfying $\xi_{0}\geq \varsigma>0$ for some $\varsigma>0$, and let the velocity $u\in C([0,T];X_{n})$ satisfying
$$u(x,t)=\sum_{i=1}^{n}\lambda_{i}(t)\psi_{i}(x,z),\ \ (t,x,z)\in [0,T]\times\Omega$$
 Since $X_{n}$ is a finite-dimensional space, all the norms are equivalence on $X_{n}$. Therefore,  $u$ is bounded in $C([0,T];C^{k}(\Omega))$ for any $k\in \mathbb{N}$ and there exists a constant $C>0$ depending on $k$ such that
\begin{equation}\label{b2}
\|u\|_{C([0,T];C^{k}(\Omega))}\leq C\|u\|_{C([0,T];L^{2}(\Omega))}.
\end{equation}
Then the approximate of  the continuity equation is defined as follows:
\begin{equation}\label{b3}
\begin{aligned}
\left\{\begin{array}{lll}
\partial_{t}\xi+{\rm div}_{x}(\xi \overline{u})=\varepsilon \Delta_{x}\xi,\\
\xi_{0}\in C^{\infty}(\Omega),\ \ \xi_{0}\geq \varsigma >0,
\end{array}\right.
\end{aligned}
\end{equation}
In order to show the well-posedness of the parabolic problem (\ref{b3}), we introduce the following lemma:
\begin{Lemma}\label{le4}\cite {ala1995}
Let $\Omega\subset \mathbb{R}^{3}$ be a bounded domain of class $C^{2,\alpha},\ \alpha\in (0,1)$ and let $u\in C([0,T];X_{n})$ be a given vector field. If the initial data $\xi_{0}\geq \varsigma\geq0,\ \xi_{0}\in C^{2,\alpha}(\overline{\Omega})$, then there exists a unique classical solution $\xi=\xi_{u}$ to the  problem (\ref{b3}), more specifically,
\begin{equation}\label{b4}
\begin{aligned}
\xi\in C([0,T];C^{2,\alpha}(\overline{\Omega})),\ \
\partial_{t}\xi\in C([0,T];C^{0,\alpha}(\overline{\Omega})).
\end{aligned}
\end{equation}
\end{Lemma}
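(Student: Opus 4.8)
\textbf{Proof proposal for Lemma \ref{le4}.}

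The statement is a standard parabolic regularity result applied to the linear convection--diffusion equation $\partial_t\xi+{\rm div}_x(\xi\overline u)=\varepsilon\Delta_x\xi$ with a prescribed, smooth-in-space and continuous-in-time drift $\overline u$; the plan is to reduce it to the classical Schauder theory of linear parabolic equations together with a fixed-point/continuation argument. First I would rewrite the equation in non-divergence form, $\partial_t\xi-\varepsilon\Delta_x\xi+\overline u\cdot\nabla_x\xi+({\rm div}_x\overline u)\,\xi=0$, noting that because $u\in C([0,T];X_n)$ and all norms on the finite-dimensional space $X_n$ are equivalent (see \eqref{b2}), the coefficients $\overline u$ and ${\rm div}_x\overline u$ lie in $C([0,T];C^k(\overline\Omega))$ for every $k$, in particular in the H\"older class $C^{0,\alpha}$ in space, uniformly in $t$. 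Since $\overline u$ depends only on $x$ (it is the vertical average of $u$) the equation is genuinely a uniformly parabolic equation in the horizontal variable with $z$ a parameter, but one may equally treat the full $\Omega\subset\mathbb R^3$ with the stated periodic-in-$x$ and Neumann-in-$z$ boundary conditions, all of which are compatible with Schauder estimates.

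Second, I would establish existence and uniqueness. Uniqueness is immediate: the difference of two solutions satisfies the same linear homogeneous equation with zero initial data, and testing against the difference (or invoking the maximum principle, which applies since the equation is parabolic with bounded coefficients) forces it to vanish. For existence I would use the continuity method in time: on a small interval $[0,T_1]$ one solves the equation by a contraction mapping in $C([0,T_1];C^{0,\alpha}(\overline\Omega))$, freezing the drift and using the heat-semigroup smoothing estimate $\|e^{\varepsilon t\Delta}f\|_{C^{2,\alpha}}\le C t^{-1}\|f\|_{C^{0,\alpha}}$ (or, more directly, the $L^p$--$W^{2,p}$ parabolic estimate followed by Sobolev embedding and bootstrapping to Schauder). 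Because the equation is \emph{linear}, the a priori bound one obtains on $[0,T_1]$ is of the form $\|\xi(t)\|_{C^{2,\alpha}}\le C(t)\|\xi_0\|_{C^{2,\alpha}}$ with $C(t)$ depending only on $T$ and on $\sup_{[0,T]}\|\overline u\|_{C^{1,\alpha}}$; hence the local solution does not blow up and can be continued step by step to the whole interval $[0,T]$, giving $\xi\in C([0,T];C^{2,\alpha}(\overline\Omega))$. Plugging this back into the equation yields $\partial_t\xi=\varepsilon\Delta_x\xi-{\rm div}_x(\xi\overline u)\in C([0,T];C^{0,\alpha}(\overline\Omega))$, which is exactly \eqref{b4}.

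Third, I would record the positivity lower bound $\xi\ge\varsigma>0$ (implicitly needed in the sequel, and consistent with the hypothesis $\xi_0\ge\varsigma$): since $\xi\equiv\varsigma$ is a subsolution of the non-divergence form equation only if ${\rm div}_x\overline u\equiv0$, the correct statement is obtained from the representation/maximum principle for the linear equation, giving $\varsigma e^{-t\sup|{\rm div}_x\overline u|}\le\xi(t,x,z)\le\|\xi_0\|_{L^\infty}e^{t\sup|{\rm div}_x\overline u|}$, so $\xi$ stays strictly positive on $[0,T]$. The only genuine subtlety — and the step I would flag as the main obstacle — is the matching of boundary conditions and interior regularity: the Neumann condition $\partial_z\psi_n=0$ at $z=0,h$ has to be shown compatible with the initial data so that no boundary layer destroys the $C^{2,\alpha}$-regularity up to $\overline\Omega$; since $\xi_0$ and the eigenfunctions $\psi_n$ are built to satisfy exactly these conditions this compatibility holds, but it is the point at which one must invoke the Schauder theory for parabolic problems with oblique/Neumann data (as in the cited reference \cite{ala1995}) rather than the interior theory alone. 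Modulo that citation, the argument is the standard linear parabolic existence--uniqueness--regularity package, and I would present it in that order: reformulation, uniqueness via maximum principle, local existence by contraction, global continuation by linearity, then regularity bootstrap and the positivity bound.
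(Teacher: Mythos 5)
The paper does not actually prove this lemma: it is imported verbatim as a citation to Lunardi's book on analytic semigroups and parabolic optimal regularity, and the only ``proof'' the authors offer is the immediately following remark that one can bootstrap to $C^1([0,T];C^{11}(\overline\Omega))$ using the smoothness of $u\in C([0,T];X_n)$. Your sketch, by contrast, reconstructs the standard linear parabolic package (non-divergence rewrite, Schauder/semigroup estimates, contraction on a short interval, continuation by linearity, bootstrap of $\partial_t\xi$), which is exactly the content of the result Lunardi's book supplies; it also correctly identifies the reduction enabled by $\partial_z\xi=0$ and the finite-dimensionality of $X_n$, and your maximum-principle lower/upper bounds for $\xi$ reproduce the paper's estimate \eqref{b5}, which the authors actually prove \emph{after} the lemma via explicit sub/super-solutions rather than inside it. So you have done more than the paper, not less.

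One point to tighten: you write that ``one may equally treat the full $\Omega\subset\mathbb R^3$ with the stated periodic-in-$x$ and Neumann-in-$z$ boundary conditions, all of which are compatible with Schauder estimates.'' This is not literally true, because the operator in \eqref{b3} is $\varepsilon\Delta_x$ --- there is no diffusion in $z$ --- so the equation is \emph{not} uniformly parabolic on the three-dimensional cylinder, and 3D parabolic Schauder theory does not apply directly. Your earlier observation is the right one and should be the argument: $\overline u$ and $\xi_0$ are $z$-independent, the constraint $\partial_z\xi=0$ is propagated, and the problem is genuinely a uniformly parabolic equation on the two-dimensional torus $\Omega_x=\mathbb T^2$ with $z$ inert, to which the cited regularity theory applies without any boundary-layer issue in $z$ (indeed, on $\mathbb T^2$ there is no spatial boundary at all, so the compatibility-condition subtlety you flag disappears; the Neumann-in-$z$ condition is trivially satisfied by any $z$-independent function). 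Apart from that slip, your route is the standard one and matches the result the paper invokes.
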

Moreover, it is easy to prove that the system $(\ref{b3})$ exists an unique classical solution $\xi\in C^{1}([0,T];C^{11}(\overline{\Omega}))$ by using the bootstrap method and Lemma 3.1£¬ due to the given function $u\in C([0,T];X_{n})$ .

Furthermore, if $0<\underline{\xi}\leq \xi\leq \overline{\xi}$ and ${\rm div}_{x}\overline{u}\in L^{1}([0,T];L^{\infty}(\Omega))$, the maximum principle gives $$\xi(x,t)\geq 0.$$

We define $L\xi=\partial_{t}\xi+{\rm div}_{x}(\xi \overline{u})-\varepsilon\triangle_{x}\xi$. By direct calculation, we can obtain
\begin{equation*}
\begin{aligned}
&L(\overline{\xi}e^{\int_{0}^{T}\|{\rm div}_{x}\overline{u}\|_{L^{\infty}}dt})=\overline{\xi}e^{\int_{0}^{T}
\|{\rm div}\overline{u}\|_{L^{\infty}}dt}(\|{\rm div}_{x}\overline{u}\|
_{L^{\infty}}+{\rm div}_{x}\overline{u})\geq0,\\
&L(\underline{\xi}e^{-\int_{0}^{T}\|{\rm div}_{x}\overline{u}\|_{L^{\infty}}dt})\leq 0,\ \ \ \ L\xi=0.
\end{aligned}
\end{equation*}
 So $\overline{\xi}e^{\int_{0}^{T}\|{\rm div}_{x}\overline{u}\|_{L^{\infty}}dt}$ and $\underline{\xi}e^{-\int_{0}^{T}\|{\rm div}_{x}\overline{u}\|_{L^{\infty}}dt}$ are super solutions and sub solutions to the equation (\ref{b3}) due to $0<\underline{\xi}\leq \xi\leq \overline{\xi}$ respectively. By using the comparison principle, we can obtain
\begin{equation}\label{b5}
0<\underline{\xi}e^{-\int_{0}^{T}\|{\rm div}_{x}\overline{u}\|_{L^{\infty}}dt}\leq \xi(x,t)\leq \overline{\xi}e^{\int_{0}^{T}\|{\rm div}_{x}\overline{u}\|_{L^{\infty}}dt},\ \ \forall x \in \Omega,\ \ t\geq0.
\end{equation}

Next we will show that the solution of the equation (\ref{b3}) depends on the velocity $u$ continuously. Let $\xi_{1},\ \xi_{2}$ are two solutions with the same initial data $(\ref{b3})_{2}$, i.e.
$$\partial_{t}\xi_{1} +{\rm div}_{x}(\xi_{1}\overline{u}_{1})=\varepsilon\Delta_{x}\xi_{1},\ \  \partial_{t}\xi_{2} +{\rm div}_{x}(\xi_{2}\overline{u}_{2})=\varepsilon\Delta_{x}\xi_{2}.$$
Subtracting the above two equations, multiplying the result equation with $-\Delta_{x}(\xi_{1}-\xi_{2})+(\xi_{1}-\xi_{2})$ and integrating by parts with respect to $x$ over $\Omega$, we have
\begin{equation*}
\begin{aligned}
\sup_{t\in [0,\tau]}\|\xi_{1}-\xi_{2}\|_{H^{1}}\leq \tau C( \xi_{0},\varepsilon,\|\overline{u}_{1}\|_{L^{1}((0,\tau);W^{1,\infty})},\|\overline{u}_{1}\|_{L^{1}((0,\tau);W^{1,\infty})})
\|u_{1}-u_{2}\|_{H^{1}}.
\end{aligned}
\end{equation*}
Moreover, for $u\in C([0,T];X_{n})$ a given vector field, by using the bootstrap method and compactness analysis, we can prove
\begin{equation}\label{b6}
\begin{aligned}
\|\xi_{1}&-\xi_{2}\|_{C([0,\tau];C^{11}(\overline{\Omega}))}\\
&\leq \tau C( \xi_{0},\varepsilon,\|\overline{u}_{1}\|_{L^{1}((0,\tau);X_{n})},\|\overline{u}_{1}\|_{L^{1}((0,\tau);X_{n})})
\|u_{1}-u_{2}\|_{C([0,\tau];X_{n})}.
\end{aligned}
\end{equation}

According to the mass equation, we construct the operator $\mathcal{S}:C([0,T];X_{n})\rightarrow C([0,T];C^{11}(\overline{\Omega}))$ by $ \mathcal{S}(u)=\xi$. Due to Lemma 3.1, (\ref{b5}) and (\ref{b6}), we have the following proposition.
\begin{Proposition}\label{pro1}
If $0<\underline{\xi}\leq \xi\leq \overline{\xi},\ \xi_{0}\in C^{\infty}(\overline{\Omega}),\ \overline{u}\in C([0,T];X_{n})$, then there exists an operator $\mathcal{S}:C([0,T];X_{n})\rightarrow C([0,T];C^{11}(\overline{\Omega}))$ satisfying
\begin{itemize}
  \item $\xi=\mathcal{S}(u)$ is an unique solution to the problem (\ref{b3}).
  \item $0<\underline{\xi}e^{-\int_{0}^{T}\|{\rm div}_{x}\overline{u}\|_{L^{\infty}}dt}\leq \xi(x,t)\leq \overline{\xi}e^{\int_{0}^{T}\|{\rm div}_{x}\overline{u}\|_{L^{\infty}}dt},\ \ \forall x\in \Omega,\ \ t\geq0.$
  \item $
\|\mathcal{S}(u_{1})-\mathcal{S}(u_{2})\|_{C([0,\tau];C^{11}(\overline{\Omega}))}$\\
$\leq \tau C( \xi_{0},\varepsilon,\|u_{1}\|_{L^{1}((0,\tau);X_{n})},\|u_{1}\|_{L^{1}((0,T\tau);X_{n})})
\|u_{1}-u_{2}\|_{C([0,\tau];X_{n})}$,\\
 for any $\tau \in [0,T]$ and $u_{1},u_{2}\in M_{k}=\{u\in C([0,T];X_{n});\|u\|_{C([0,T];X_{n})}\leq k,\ t\in\ [0,T]\}.$
\end{itemize}
\end{Proposition}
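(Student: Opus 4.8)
The plan is to assemble the proposition from the three ingredients already prepared above: Lemma \ref{le4}, the comparison–principle bounds \eqref{b5}, and the continuous–dependence estimate \eqref{b6}.

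First I would fix $u\in C([0,T];X_n)$ and note that its vertical average $\overline u$ again lies in $C([0,T];X_n)$, hence by \eqref{b2} is smooth in $(x,z)$ with all spatial derivatives bounded uniformly in $t$. Then \eqref{b3} is a linear uniformly parabolic equation with smooth time–continuous coefficients and smooth datum $\xi_0\ge\varsigma>0$, so Lemma \ref{le4} provides a unique classical solution with $\xi\in C([0,T];C^{2,\alpha}(\overline\Omega))$ and $\partial_t\xi\in C([0,T];C^{0,\alpha}(\overline\Omega))$; differentiating \eqref{b3} successively in $x$ and applying linear parabolic Schauder estimates at each step (the bootstrap) upgrades this to $\xi\in C^1([0,T];C^{11}(\overline\Omega))$, the regularity needed to handle the high–order terms $\kappa\xi\nabla_x(\Delta_x\sqrt\xi/\sqrt\xi)$ and $\delta\xi\nabla_x\Delta_x^5\xi$ in \eqref{b1}. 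This defines $\mathcal S(u)=\xi$ and settles the first bullet together with the regularity claim.

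Next, for the two–sided bound I would use the operator $L\xi:=\partial_t\xi+{\rm div}_x(\xi\overline u)-\varepsilon\Delta_x\xi$ and the spatially constant barriers $\overline\xi\,e^{\int_0^T\|{\rm div}_x\overline u\|_{L^\infty}dt}$ and $\underline\xi\,e^{-\int_0^T\|{\rm div}_x\overline u\|_{L^\infty}dt}$; a direct computation (carried out above) shows these are a super- and a sub-solution and, since $0<\underline\xi\le\xi_0\le\overline\xi$, dominate/are dominated by $\xi_0$ at $t=0$, so the parabolic comparison principle yields \eqref{b5}, keeping $\xi$ strictly positive and bounded. Finally, for the Lipschitz property I would take $u_1,u_2\in M_k$, put $\sigma=\mathcal S(u_1)-\mathcal S(u_2)$, subtract the two copies of \eqref{b3}, test with $-\Delta_x\sigma+\sigma$ over $\Omega_x$, control the drift terms via the $W^{1,\infty}$ bounds on the $\overline u_i$ and the bound \eqref{b5} on $\mathcal S(u_2)$, and close by Grönwall to get $\sup_{[0,\tau]}\|\sigma\|_{H^1}\le \tau\,C\,\|u_1-u_2\|_{C([0,\tau];X_n)}$; bootstrapping this inequality in the spatial derivatives as before promotes it to the $C^{11}$ norm, which is exactly the third bullet \eqref{b6}.

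The only genuinely delicate point is the iterated parabolic regularity in the first and third steps: at each order up to eleven one must verify that differentiating \eqref{b3} yields a linear parabolic equation whose coefficients and source are already controlled in the norm supplied by the previous iterate — this is where the smoothness of the Galerkin velocities (through \eqref{b2}) and of $\xi_0$ is essential. Everything else reduces to the maximum principle and to energy estimates for a linear parabolic equation on the torus, and the crucial structural feature to keep track of is the explicit factor $\tau$ in the contraction estimate, which will later let $\mathcal S$ be iterated in a fixed-point scheme for the full approximate system.
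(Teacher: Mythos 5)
Your proposal follows essentially the same route as the paper: existence and base regularity from Lemma \ref{le4} upgraded by a parabolic bootstrap to $C^{1}([0,T];C^{11}(\overline\Omega))$, the two-sided bound via the same super-/sub-solution barriers and the comparison principle as in \eqref{b5}, and the Lipschitz estimate by subtracting the two equations, testing with $-\Delta_x(\xi_1-\xi_2)+(\xi_1-\xi_2)$, and bootstrapping to the $C^{11}$ norm as in \eqref{b6}. This matches the paper's argument; no gaps.
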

\begin{Remark}\label{re1}
Proposition \ref{pro1} implies the operator $\mathcal{S}$ is Lipschitz continuous for sufficient small time $t$.
\end{Remark}
\subsection{Faedo-Galerkin approximation}
We intend to solve the momentum equation on the space $X_{n}$. For any test function $\varphi\in X_{n},$ the approximate solutions $u_{n}\in(0,T;X_{n})$ looking for are required to satisfy
\begin{equation}\label{b7}
\begin{aligned}
&\int_{\Omega}\xi_{n} u_{n}(T)\varphi dxdz-\int_{\Omega} m_{0}\varphi dxdz+\mu\int_{0}^{T}\int_{\Omega} \Delta u_{n}\cdot\Delta \varphi dxdzdt\\
&-\int_{0}^{T}\int_{\Omega}\xi_{n} u_{n}w_{n}\partial_{z}\varphi dxdzdt
-\int_{0}^{T}\int_{\Omega} (\xi_{n} u_{n}\otimes u_{n})\cdot\nabla_{x} \varphi dxdzdt\\
&+2\overline{\nu}_{1}\int_{0}^{T}\int_{\Omega} \xi_{n} D_{x}(u_{n})\cdot\nabla_{x} \varphi dxdzdt-\int_{0}^{T}\int_{\Omega} \xi_{n}{\rm div}_{x} \varphi dxdzdt\\
&+\overline{\nu}_{2}\int_{0}^{T}\int_{\Omega}\xi_{n}\partial_{z}u_{n}\partial_{z}\varphi dxdzdt+\eta\int_{0}^{T}\int_{\Omega} \xi_{n}^{-10}{\rm div}_{x}\varphi dxdzdt\\
&+\varepsilon\int_{0}^{T}\int_{\Omega} \nabla_{x}\xi_{n}\cdot\nabla_{x} u_{n}\varphi dxdzdt
+r_{0}\int_{0}^{T}\int_{\Omega} u_{n}\varphi dxdzdt\\
&+r\int_{0}^{T}\int_{\Omega} \xi_{n} |u_{n}|u_{n} \varphi dxdt-\delta\int_{0}^{T}\int_{\Omega} \xi_{n} \nabla_{x}\Delta_{x}^{5} \xi_{n}\varphi dxdzdt \\
&=-2\kappa\int_{0}^{T}\int_{\Omega}\Delta_{x}\sqrt{\xi_{n}}\nabla_{x}\sqrt{\xi_{n}}\varphi dxdzdt-\kappa\int_{0}^{T}\int_{\Omega}\Delta_{x}\sqrt{\xi_{n}}\sqrt{\xi_{n}}{\rm div}_{x}\varphi dxdzdt,
\end{aligned}
\end{equation}
where
$$
w_{n}=-\frac{{\rm div}_{x}(\xi_{n}\widetilde{u}_{n}))}{\xi_{n}}+z\frac{{\rm div}_{x}(\xi_{n}\overline{u}_{n})}{\xi_{n}}
$$
and
 $$\xi_{n}=\mathcal{S}(u_{n}).$$
To solve (\ref{b7}), we follow the same arguments as in \cite{feireisl2004,fnp2001,aji2010}, and introduce the following family of operators
$$\mathcal{M}[\xi]:X_{n}\rightarrow X_{n}^{*},\ \ <\mathcal{M}[\xi] u,v>=\int_{\Omega}\xi u\cdot vdx,\ \ u,\ v\in X_{n}.$$
 Due to Lax-Milgram theorem, the above operators $M[\xi]$ are invertible provided  $\xi\in L^{1}(\Omega)$ with $\xi >\underline{\xi}>0$, and then we have
$$\|\mathcal{M}^{-1}[\xi]\|_{\mathcal{L}(X_{n}^{*},X_{n})}\leq \underline{\xi}^{-1},$$
where $\mathcal{L}(X_{n}^{*},X_{n})$ is the set of bounded liner mappings from $X_{n}^{*}$ to $X_{n}$.
 Moreover,
 \begin{equation*}
 \begin{aligned}
 \mathcal{M}^{-1}[\xi_{1}]-\mathcal{M}^{-1}[\xi_{2}]&=\mathcal{M}^{-1}[\xi_{2}]\mathcal{M}[\xi_{2}]
 \mathcal{M}^{-1}[\xi_{1}]-\mathcal{M}^{-1}[\xi_{2}]\mathcal{M}[\xi_{1}]\mathcal{M}^{-1}[\xi_{1}]\\
 &=\mathcal{M}^{-1}[\xi_{2}](\mathcal{M}[\xi_{2}]-\mathcal{M}[\xi_{1}])\mathcal{M}^{-1}[\xi_{1}],
 \end{aligned}
 \end{equation*}
 which imply that the operators
 $\mathcal{M}^{-1}[\rho]$ is Lipschitz continuous in the sense
  $$\|\mathcal{M}^{-1}[\xi_{1}]-\mathcal{M}^{-1}[\xi_{2}]\|_{\mathcal{L}(X_{n}^{*},X_{n})}\leq C(n,\underline{\xi})\|\xi_{1}-\xi_{2}\|_{L^{1}(\Omega)}$$
  for $\xi_{1},\ \xi_{2}\in L^{1}(\Omega)$ with $\xi_{1},\ \xi_{2}\geq \underline{\xi}>0$.
Then the integral equation  (\ref{b7}) can be rephrased as follows£»
\begin{equation}\label{b8}
\begin{aligned}
u_{n}(t)=\mathcal{M}^{-1}[(\mathcal{S}(u_{n})(t)]\big(\mathcal{M}[\xi_{0}](u_{0})
+\int_{0}^{T}\mathcal{N}
(\mathcal{S}(u_{n}),u_{n})(s)ds\big),
\end{aligned}
\end{equation}
where
\begin{equation*}
\begin{aligned}
\mathcal{N}
(\mathcal{S}(u_{n}),u_{n})(s)&=2\overline{\nu}_{1} {\rm div}_{x}(\xi D_{x}(u_{n}))+\overline{\nu}_{2}\partial_{z}(\xi\partial_{z}u_{n})-{\rm div}_{x}(\xi u_{n}\otimes u_{n})-\partial_{z}(\xi u_{n}w_{n})\\
&\ \ \ -\mu \Delta^{2}u_{n}-\varepsilon\nabla_{x} \xi\cdot\nabla_{x} u_{n}+\kappa\xi\nabla_{x}(\frac{\Delta_{x}\sqrt{\xi}}{\sqrt{\xi}})-\nabla_{x}\xi+\eta\nabla\xi^{-10}\\
&\ \ \ -r_{0}u_{n}-r\xi|u_{n}|u_{n}+\delta\xi\nabla_{x}\Delta_{x}^{5}\xi.
\end{aligned}
\end{equation*}

In view of the Lipschitz continuous estimates for $\mathcal{S}$ and $\mathcal{M}^{-1}$, the equation (\ref{b8}) can be solved on a short  time $[0,T^{'}]$, where $T^{'}\leq T$, by using the fixed-point theorem on the Banach space $C([0,T];X_{n})$. Thus there exists a unique local-in-time solution $(\xi_{n},u_{n},w_{n}))$ to (\ref{b3}) and (\ref{b8}).

Next we will extend this obtained local solution to be a global one. Differentiating $(\ref{b7})$ with respect to time $t$, taking $\varphi=u_{n}$ and integrating by parts with respect to $x$ over $\Omega$, we have the following energy estimate
\begin{equation}\label{b9}
\begin{aligned}
&\int_{\Omega}\frac{d}{dt}\xi_{n}(\frac{u_{n}^{2}}{2})dxdz
+\int_{\Omega}\nabla_{x}\xi_{n}\cdot u_{n}dxdz+r_{0}\int_{\Omega}u_{n}^{2}dxdz+r\int_{\Omega}\xi_{n}|u_{n}|^{3}dxdz
\\&+\mu\int_{\Omega}|\Delta u_{n}|^{2}dxdz
+2\overline{\nu}_{1}\int_{\Omega}\xi_{n}|D_{x}u_{n}|^{2}dxdz+\overline{\nu}_{2}\int_{\Omega}\xi_{n}|\partial_{z}u_{n}|^{2}dxdz-\\
&\eta\int_{\Omega}\nabla_{x}\xi_{n}^{-10} u_{n}dxdz+\kappa\int_{\Omega}\frac{\Delta_{x}\sqrt{\xi_{n}}}{\sqrt{\xi_{n}}}{\rm div}_{x}(\xi_{n} u_{n})dxdz+\delta\int_{\Omega}\Delta_{x}^{5}\xi_{n}{\rm div}_{x}(\xi_{n} u_{n})dxdz\\
&=0
\end{aligned}
\end{equation}

Furthermore, we estimate the terms of the left hand side one by one:
\begin{equation}\label{b10}
\begin{aligned}
\int_{\Omega} \nabla_{x} \xi_{n}\cdot u_{n}dxdz&=\int_{\Omega}\nabla_{x}\ln\xi_{n}\cdot \xi_{n} u_{n}dxdz\\
&=\int_{\Omega}\ln\xi_{n}(\partial_{t}\xi_{n}-\epsilon\Delta_{x}\xi_{n}+
\partial_{z}(\xi_{n} w_{n}))dxdz \\
&=\frac{d}{dt}\int_{\Omega}\xi_{n}\ln\xi_{n}-\xi_{n}+1dxdz+4\epsilon\int_{\Omega}|\nabla_{x}\sqrt{\xi_{n}}|^{2}dxdz
\end{aligned}
\end{equation}
where we used (\ref{a11}),$\  \partial_{z}\xi=0$ and integration by parts.
Next we will deal with the cold pressure and high order derivative of the density terms as follows
\begin{equation}\label{b11}
\begin{aligned}
-\eta\int_{\Omega} \nabla_{x}\xi_{n}^{-10}\cdot u_{n}dxdz&=-\frac{10}{11}\eta\int_{\Omega} \xi_{n} u_{n}\cdot\nabla_{x} \xi_{n}^{-11}dxdz\\
&=\frac{10}{11}\eta\int_{\Omega} \xi_{n}^{-11}[\varepsilon\Delta_{x}\xi_{n}-\partial_{t}\xi_{n}-\partial_{z}(\xi_{n} w_{n})]dxdz\\
&=\frac{1}{11}\eta\frac{d}{dt}\int_{\Omega}\xi_{n}^{-10}dxdz+\frac{2}{5}
\eta\varepsilon\int_{\Omega}|\nabla_{x}\xi_{n}^{-5}|^{2}dxdz,
\end{aligned}
\end{equation}
\begin{equation}\label{b12}
\begin{aligned}
\delta\int_{\Omega} {\rm div}_{x}(\xi_{n} u_{n})\Delta_{x}^{5}\xi_{n}&=\delta\int_{\Omega} [\varepsilon\Delta_{x}\xi_{n}-\partial_{t}\xi_{n}-\partial_{z}(\xi_{n} w_{n})]\Delta_{x}^{5}\xi_{n} dxdz\\
&=\frac{\delta}{2}\frac{d}{dt}\int_{\Omega}|\nabla_{x}
\Delta_{x}^{2}\xi_{n}|^{2}dxdz+\delta\varepsilon\int_{\Omega}|\Delta_{x}^{3}\xi_{n}|^{2}dxdz.
\end{aligned}
\end{equation}
Finally, we will estimate the quantum term
\begin{equation}\label{b13}
\begin{aligned}
\kappa\int_{\Omega}\frac{\Delta_{x}\sqrt{\xi_{n}}}{\sqrt{\xi_{n}}}{\rm div}_{x}(\xi_{n} u_{n})dxdz&=\kappa\int_{\Omega}\frac{\Delta_{x}\sqrt{\xi_{n}}}{\sqrt{\xi_{n}}} [\varepsilon\Delta_{x}\xi_{n}-\partial_{t}\xi_{n}-\partial_{z}(\xi_{n} w_{n}]dxdz\\
&=\kappa\frac{d}{dt}\int_{\Omega}|\nabla_{x}\sqrt{\xi_{n}}|^{2}dxdz+\frac{\kappa\varepsilon}{2}
\int_{\Omega}\xi_{n}|\nabla_{x}^{2}\ln\xi_{n}|^{2}dxdz,
\end{aligned}
\end{equation}
where we used
\begin{equation*}
\begin{aligned}
2\xi_{n}\nabla_{x}(\frac{\Delta_{x}\sqrt{\xi_{n}}}{\sqrt{\xi_{n}}})&=2\xi_{n}\nabla_{x}({\rm div}_{x}(\frac{\nabla_{x}\sqrt{\xi_{n}}}{\sqrt{\xi_{n}}})-\nabla_{x}\sqrt{\xi_{n}}\cdot\nabla_{x}
(\frac{1}{\sqrt{\xi_{n}}}))\\
&=\xi_{n}{\rm div}_{x}(\nabla_{x}^{2}\ln\xi_{n})+\frac{1}{2}\xi_{n}\nabla_{x}(\nabla_{x}\ln\xi_{n})^{2}\\
&={\rm div}_{x}(\xi_{n}\nabla_{x}^{2}\ln\xi_{n}).
\end{aligned}
\end{equation*}
Substituting (\ref{b10})-(\ref{b13}) into (\ref{b9}), we have the following energy balance
\begin{equation}\label{b14}
\begin{aligned}
&\frac{d}{dt}E(\xi_{n},u_{n})+
4\epsilon\int_{\Omega}|\nabla_{x}\sqrt{\xi_{n}}|^{2}dxdz+r_{0}\int_{\Omega}u_{n}^{2}dxdz+r\int_{\Omega}\xi_{n}|u_{n}|^{3}dxdz\\
&+\int_{\Omega} 2\overline{\nu}_{1}\xi_{n}|D_{x}(u_{n})|^{2}dxdz
+\int_{\Omega}\overline{\nu}_{2}\xi_{n}|\partial_{z}u_{n}|^{2}dxdz+\mu\int_{\Omega} |\Delta u_{n}|^{2}dxdz\\
&+\frac{2}{5}\eta\epsilon\int_{\Omega}|\nabla_{x}\xi_{n}^{-5}|^{2}dxdz+\frac{\kappa\epsilon}{2}\int_{\Omega}\xi_{n}|\nabla_{x}\ln\xi_{n}|^{2}dxdz+
\delta\epsilon\int_{\Omega}|\Delta^{3}_{x}\xi_{n}|^{2}dxdz=0,
\end{aligned}
\end{equation}
on $[0,T^{'}]$, where
$$
E(\xi_{n},u_{n})=\int_{\Omega}\frac{1}{2}\xi_{n} u_{n}^{2}+\xi_{n}\ln\xi_{n}-\xi_{n}+1+\frac{1}{11}\eta\xi_{n}^{-10}+\kappa|\nabla_{x}\sqrt{\xi_{n}}|^{2}+
\frac{\delta}{2}|\nabla_{x}\Delta_{x}^{2}\xi_{n}|^{2}dxdz
$$
and
$$
E_{0}(\xi_{n},u_{n})=\int_{\Omega}\frac{1}{2}\xi_{0} u_{0}^{2}+\xi_{0}\ln\xi_{0}-\xi_{0}+1+\frac{1}{11}\eta\xi_{0}^{-10}+\kappa|\nabla_{x}\sqrt{\xi_{0}}|^{2}+
\frac{\delta}{2}|\nabla_{x}\Delta_{x}^{2}\xi_{0}|^{2}dxdz.
$$

So the energy inequality (\ref{b14}) yields
\begin{equation}\label{b15}
\int_{0}^{T^{'}}\|\Delta u_{n}\|_{L^{2}}^{2}dt\leq E_{0}(\xi_{n},u_{n})<+\infty,
\end{equation}
Thanks to dim$X_{n}<\infty$ and (\ref{b5}), the density is bounded and bounded away from blow with a positive constant, which means there exists a constant $c>0$ such that
\begin{equation}\label{b16}
0<\frac{1}{c}\leq \xi_{n}\leq c,
\end{equation}
for all $t\in [0,T^{'})$. Furthermore, the basic energy equality also gives us
\begin{equation}\label{b17}
\sup_{t\in(0,T^{'})}\int_{\Omega}\xi_{n}u_{n}^{2}dxdz\leq C<\infty,
\end{equation}

which together with (\ref{b16}), implies
\begin{equation}\label{b18}
\sup_{t\in(0,T^{'})}\|u_{n}\|_{L^{\infty}}\leq C<\infty,
\end{equation}
where we have used the fact that all the norms are equivalence on $X_{n}$. Then we can repeat above argument many times, we can extend $T^{'}$ to $T$ and obtain $u_{n}\in C([0,T];X_{n})$. Thus there exists a global solution $(\xi_{n},u_{n},w_{n}))$ to (\ref{b3}), (\ref{b8}) for any time $T$.
By the energy inequality(\ref{b14}), we have
$$
\sup_{t\in(0,T)}\int_{\Omega}\sqrt{\xi_{n}}u_{n}^{2}dxdz\leq E_{0}(\xi_{n},u_{n}).
$$

Then we can prove
\begin{equation*}
\begin{aligned}
\int_{\Omega}\xi_{n}\overline{u}_{n}^{2}dxdz&=\int_{\Omega}\xi_{n}(\int_{0}^{h}\frac{1}{h}u_{n}(\tau)d\tau)^{2}dxdz\\
&\leq\int_{\Omega}\xi_{n}\frac{1}{h^{2}}(\int_{0}^{h}1^{2}d\tau)(\int_{0}^{h}u_{n}^{2}(\tau)d\tau)dxdz\\
&\leq \frac{1}{h}\int_{0}^{h}E_{0}(\xi_{n},u_{n})dz\leq E_{0}(\xi_{n},u_{n})
\end{aligned}
\end{equation*}
and
\begin{equation*}
\begin{aligned}
\int_{\Omega}\xi_{n}\widetilde{u}_{n}^{2}dxdz&=\int_{\Omega}\xi_{n}(\int_{0}^{z}u_{n}(\tau)d\tau)^{2}dxdz\\
&\leq\int_{\Omega}\xi_{n}(\int_{0}^{h}1^{2}d\tau)(\int_{0}^{h}u_{n}^{2}(\tau)d\tau)dxdz\\
&\leq h\int_{0}^{h}E_{0}(\xi_{n},u_{n})dz\leq h^{2}E_{0}(\xi_{n},u_{n}).
\end{aligned}
\end{equation*}
Furthermore, the basic energy also gives us
$$
\sup_{t\in(0,T)}\int_{\Omega}\delta|\nabla_{x}\Delta_{x}^{2}\xi_{n}|^{2}dxdz\leq E_{0}(\xi_{n},u_{n}),
$$
which together with (\ref{b16}) implies that $\xi(x,t)$ is a positive smooth function for all $(x,t)$.
Then the  energy inequality allows us to present the following lemma (see\cite{yucheng2016,aji2010}).
\begin{Lemma}\label{lemma1}
For any smooth positive function $\xi(x,t)$, the following uniform estimate  holds
\begin{equation}\label{b19}
\begin{aligned}
(\kappa\varepsilon)^{\frac{1}{2}}\|\sqrt{\xi_{n}}\|_{L^{2}(0,T;H^{2}(\Omega))}+(\kappa\varepsilon)^{\frac{1}{4}}\|\nabla_{x}\xi_{n}^{\frac{1}{4}}\|_{L^{4}(0,T;L^{4}(\Omega))}\leq C,
\end{aligned}
\end{equation}
for some constant $C>0$ which is independent of $N$
\end{Lemma}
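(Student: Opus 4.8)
The plan is to read the estimate \eqref{b19} directly off the energy balance \eqref{b14}, converting the quantum--dissipation term into the two quantities on the left of \eqref{b19} by means of a standard algebraic inequality.

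First I would integrate \eqref{b14} over $[0,T]$ and discard all the non-negative terms except the $\kappa$--dissipation, using \eqref{b13}, in which the $\kappa\varepsilon$--term appears as $\tfrac{\kappa\varepsilon}{2}\int_\Omega\xi_n|\nabla_x^2\ln\xi_n|^2\,dx\,dz$. This gives
$$\kappa\varepsilon\int_0^T\int_\Omega\xi_n|\nabla_x^2\ln\xi_n|^2\,dx\,dz\,dt\leq 2E_0(\xi_n,u_n),$$
and $E_0(\xi_n,u_n)$ is bounded independently of $n$, since $(\xi_0,u_0)$ and the parameters $\eta,\kappa,\delta$ are fixed and only the Galerkin truncation of $u_0$, which is a contraction in $L^2(\Omega)$, enters the kinetic part. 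From the same inequality I would also retain $\kappa\,\|\nabla_x\sqrt{\xi_n}\|_{L^\infty(0,T;L^2(\Omega))}^2\leq E_0$, and integrating the first equation of \eqref{b3} over $\Omega$ (both the transport and the $\varepsilon$--diffusion terms being in divergence form and $\Omega_x$ a torus) I would record the mass conservation $\|\sqrt{\xi_n}\|_{L^\infty(0,T;L^2(\Omega))}^2=\int_\Omega\xi_0\,dx\,dz$.

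Next I would invoke the pointwise identity $\xi_n|\nabla_x^2\ln\xi_n|^2=4\,|\nabla_x^2\sqrt{\xi_n}-(\nabla_x\sqrt{\xi_n}\otimes\nabla_x\sqrt{\xi_n})/\sqrt{\xi_n}|^2$ together with the attendant coercivity estimate used in \cite{yucheng2016,aji2010},
$$\int_\Omega\xi_n|\nabla_x^2\ln\xi_n|^2\,dx\,dz\geq c_0\big(\|\nabla_x^2\sqrt{\xi_n}\|_{L^2(\Omega)}^2+\|\nabla_x\xi_n^{1/4}\|_{L^4(\Omega)}^4\big),$$
where $\Omega_x=T^2$ produces no boundary terms and one uses $\nabla_x\xi_n^{1/4}=\tfrac12\xi_n^{-1/4}\nabla_x\sqrt{\xi_n}$. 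Combined with the previous step this yields $\kappa\varepsilon\int_0^T(\|\nabla_x^2\sqrt{\xi_n}\|_{L^2(\Omega)}^2+\|\nabla_x\xi_n^{1/4}\|_{L^4(\Omega)}^4)\,dt\leq C$ with $C$ independent of $n$. Taking the square root of the first term and the fourth root of the time integral of the second, and upgrading $\|\nabla_x^2\sqrt{\xi_n}\|_{L^2(\Omega)}$ to the full $H^2(\Omega)$ norm via the $L^\infty(0,T;L^2)$ bounds on $\sqrt{\xi_n}$ and $\nabla_x\sqrt{\xi_n}$ obtained above (the prefactor $(\kappa\varepsilon)^{1/2}$ being a fixed finite number), one arrives at \eqref{b19}.

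The only non-routine point is the coercivity inequality just used: expanding $\int_\Omega\xi|\nabla_x^2\ln\xi|^2$ produces a sign--indefinite cross term $-8\int_\Omega(\nabla_x^2\sqrt{\xi}:\nabla_x\sqrt{\xi}\otimes\nabla_x\sqrt{\xi})/\sqrt{\xi}$ which, after integration by parts in $x$, must be recombined with $\int_\Omega|\nabla_x\sqrt{\xi}|^4/\xi$ and $\int_\Omega(\Delta_x\sqrt{\xi})\,|\nabla_x\sqrt{\xi}|^2/\sqrt{\xi}$ so that a strictly positive constant is left in front of $\|\nabla_x^2\sqrt{\xi}\|_{L^2}^2+\|\nabla_x\xi^{1/4}\|_{L^4}^4$; this is the well-known algebraic lemma exploited in \cite{yucheng2016} (see also \cite{aji2010}), which I would quote rather than reprove. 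Everything else is bookkeeping of the non-negative terms in \eqref{b14} and of the independence of $E_0$ and of the total mass on $n$.
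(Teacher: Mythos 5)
Your argument is correct and is exactly the standard route: the paper itself gives no proof of Lemma \ref{lemma1}, merely citing \cite{yucheng2016,aji2010}, and what you write out (keep the $\tfrac{\kappa\varepsilon}{2}\int\xi_n|\nabla_x^2\ln\xi_n|^2$ dissipation from \eqref{b13}--\eqref{b14}, apply J\"ungel's coercivity inequality $\int\xi|\nabla_x^2\ln\xi|^2\geq c_0(\|\nabla_x^2\sqrt{\xi}\|_{L^2}^2+\|\nabla_x\xi^{1/4}\|_{L^4}^4)$, then pad out to the full $H^2$ norm with mass conservation and the $\kappa|\nabla_x\sqrt{\xi_n}|^2$ part of $E$) is precisely the proof those references supply. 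Your side remarks on the $n$-independence of $E_0$ and on $\xi_n$ being $z$-independent (so the lemma is really applied on $\Omega_x=T^2$) are correct bookkeeping; no gaps.
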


To conclude this part, we have the following proposition on the approximate solutions $(\xi_{n},u_{n},w_{n})$:
\begin{Proposition}\label{pro2}
Let $(\xi_{n},u_{n},w_{n})$ be the solutions of (\ref{b3}) and (\ref{b8}) on $(0,T)\times \Omega$ constructed above, then the solutions must satisfy the energy inequality
\begin{equation}\label{b20}
\begin{aligned}
&E(\xi_{n},u_{n})+
4\epsilon\int_{0}^{T}\int_{\Omega}|\nabla_{x}\sqrt{\xi_{n}}|^{2}dxdzdt+r_{0}\int_{0}^{T}\int_{\Omega}u_{n}^{2}dxdzdt\\
&+r\int_{0}^{T}\int_{\Omega}\xi_{n}|u_{n}|^{3}dxdzdt
+\int_{0}^{T}\int_{\Omega} 2\overline{\nu}_{1}\xi_{n}|D_{x}(u_{n})|^{2}dxdzdt\\
&+\int_{0}^{T}\int_{\Omega}\overline{\nu}_{2}\xi_{n}|\partial_{z}u_{n}|^{2}dxdzdt+\mu\int_{0}^{T}\int_{\Omega} |\Delta_{x}u_{n}|^{2}dxdzdt\\
&+\frac{2}{5}\eta\epsilon\int_{0}^{T}\int_{\Omega}|\nabla_{x}\xi_{n}^{-5}|^{2}dxdzdt+\frac{\kappa\epsilon}{2}\int_{0}^{T}\int_{\Omega}\xi_{n}|\nabla_{x}\ln\xi_{n}|^{2}dxdzdt\\
&+\delta\epsilon\int_{0}^{T}\int_{\Omega}|\Delta^{3}_{x}\xi_{n}|^{2}dxdzdt
\leq E(\xi_{0},u_{0}),
\end{aligned}
\end{equation}
where
$$
E(\xi_{n},u_{n})=\int_{\Omega}\frac{1}{2}\xi_{n} u_{n}^{2}+\xi_{n}\ln\xi_{n}-\xi_{n}+1+\frac{1}{11}\eta\xi_{n}^{-10}+\kappa|\nabla_{x}\sqrt{\xi_{n}}|^{2}+
\frac{\delta}{2}|\nabla_{x}\Delta_{x}^{2}\xi_{n}|^{2}dxdz.
$$

 In particular, we have
\begin{equation}\label{b21}
\begin{aligned}
&\sqrt{\xi_{n}}u_{n}\in L^{\infty}(0,T;L^{2}),\xi_{n}\ln\xi_{n}-\xi_{n}+1\in L^{\infty}(0,T;L^{1}),\eta\xi_{n}^{-10}\in L^{\infty}(0,T;L^{1}),\\
&\sqrt{\overline{\nu}_{1}}\sqrt{\xi_{n}}D_{x}u_{n}\in L^{2}(0,T;L^{2}),\sqrt{\mu}\Delta u_{n}\in L^{2}(0,T;L^{2}),\sqrt{\kappa}\sqrt{\xi_{n}}\in L^{\infty}(0,T;H^{1}),\\
&\sqrt{\delta}\xi_{n}\in L^{\infty}(0,T;H^{5}),\sqrt{\varepsilon}\nabla_{x} \sqrt{\xi}_{n}\in L^{2}(0,T;L^{2}),\sqrt{\overline{\nu}_{2}}\sqrt{\xi_{n}}\partial_{z}u_{n}\in L^{2}(0,T;L^{2})\\
&\sqrt{\varepsilon\eta}\nabla\xi_{n}^{-5}\in L^{2}(0,T;L^{2}),\sqrt{\delta\varepsilon}\xi_{n}\in L^{2}(0,T;H^{6}),\sqrt{r_{0}}u_{n}\in L^{2}(0,T;L^{2}),\\
&\xi_{n}^{\frac{1}{3}}u_{n}\in L^{3}(0,T;L^{3}),\sqrt{\kappa\varepsilon}\sqrt{\xi_{n}}\in L^{2}(0,T;H^{2}),
 \sqrt[4]{\kappa\varepsilon}\nabla_{x}\xi_{n}^{\frac{1}{4}}\in L^{4}(0,T;L^{4}),\\
 &\sqrt{\xi_{n}}\overline{u}_{n}\in L^{\infty}(0,T;L^{2}),\sqrt{\xi_{n}}\widetilde{u}_{n}\in L^{\infty}(0,T;L^{2}).
\end{aligned}
\end{equation}
\end{Proposition}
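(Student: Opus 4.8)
The plan is to establish Proposition \ref{pro2} by promoting the differential energy balance \eqref{b14}, which holds on the local existence interval $[0,T']$, to a global statement on $[0,T]$ and then reading off the uniform bounds \eqref{b21} term by term. First I would recall that the fixed-point construction on the Banach space $C([0,T];X_n)$ built from the operators $\mathcal{S}$ and $\mathcal{M}^{-1}[\xi]$ produces a unique local-in-time solution $(\xi_n,u_n,w_n)$ on some $[0,T']$, and that along this solution the Galerkin equation \eqref{b7} may be differentiated in $t$ and tested with $\varphi=u_n\in X_n$ (which is legitimate since $u_n(t)\in X_n$ for every $t$), giving \eqref{b9}. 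Feeding in the identities \eqref{b10}–\eqref{b13} — which rewrite the pressure, drag $\eta\nabla_x\xi_n^{-10}$, the high-order term $\delta\xi_n\nabla_x\Delta_x^5\xi_n$ and the quantum term $\kappa\xi_n\nabla_x(\Delta_x\sqrt{\xi_n}/\sqrt{\xi_n})$ as exact time derivatives plus manifestly nonnegative dissipation, using the continuity equation $(\ref{b3})_1$ and $\partial_z\xi_n=0$ — yields the closed energy balance \eqref{b14} with the nonnegative functional $E(\xi_n,u_n)$.

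The next step is the continuation argument. Integrating \eqref{b14} in time and discarding the nonnegative dissipation terms gives $E(\xi_n,u_n)(t)\le E(\xi_0,u_0)$ for all $t\in[0,T']$; in particular $\int_0^{T'}\|\Delta u_n\|_{L^2}^2\,dt$ and $\sup_t\int_\Omega\xi_n u_n^2$ are bounded by the initial energy as in \eqref{b15}, \eqref{b17}. Because $\dim X_n<\infty$ the bound \eqref{b5} from Proposition \ref{pro1} forces $1/c\le\xi_n\le c$ on $[0,T')$, so $\sqrt{\xi_n}u_n$ bounded in $L^2$ upgrades to $u_n$ bounded in $L^\infty$ via norm equivalence on $X_n$, giving \eqref{b18}. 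These a priori bounds are exactly what is needed so that the local solution cannot blow up: the usual ODE continuation criterion on $C([0,T];X_n)$ lets us re-apply the fixed-point argument from time $T'$ with data controlled by $E(\xi_0,u_0)$, extend to $2T'$, and iterate finitely many times to reach $T$. Thus $(\xi_n,u_n,w_n)$ exists globally and \eqref{b14} holds on all of $[0,T]$; integrating it over $[0,T]$ and keeping all the dissipation terms produces \eqref{b20}.

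Finally, \eqref{b21} follows by inspecting \eqref{b20}: each term on the left of \eqref{b20} is the square of one of the listed quantities, so $\sqrt{\xi_n}u_n\in L^\infty(0,T;L^2)$, $\sqrt{\overline{\nu}_1}\sqrt{\xi_n}D_x u_n,\ \sqrt{\mu}\Delta u_n,\ \sqrt{\overline{\nu}_2}\sqrt{\xi_n}\partial_z u_n,\ \sqrt{r_0}u_n\in L^2(0,T;L^2)$, $\xi_n\ln\xi_n-\xi_n+1$ and $\eta\xi_n^{-10}\in L^\infty(0,T;L^1)$, $\sqrt{\kappa}\sqrt{\xi_n}\in L^\infty(0,T;H^1)$, $\sqrt{\delta}\xi_n\in L^\infty(0,T;H^5)$, $\xi_n^{1/3}u_n\in L^3(0,T;L^3)$, and the $\varepsilon$-dependent pieces $\sqrt{\varepsilon}\nabla_x\sqrt{\xi_n}$, $\sqrt{\varepsilon\eta}\nabla\xi_n^{-5}$, $\sqrt{\delta\varepsilon}\xi_n\in L^2(0,T;H^6)$. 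The bounds $\sqrt{\kappa\varepsilon}\sqrt{\xi_n}\in L^2(0,T;H^2)$ and $\sqrt[4]{\kappa\varepsilon}\nabla_x\xi_n^{1/4}\in L^4(0,T;L^4)$ are supplied by Lemma \ref{lemma1}, while $\sqrt{\xi_n}\,\overline{u}_n,\ \sqrt{\xi_n}\,\widetilde{u}_n\in L^\infty(0,T;L^2)$ come from the Cauchy–Schwarz computations already displayed before the statement, estimating $\overline{u}_n$ and $\widetilde{u}_n$ in terms of $\int_0^h u_n^2\,d\tau$ and $E_0$. I expect the main obstacle to be the continuation step: one must check carefully that the a priori bounds \eqref{b16}–\eqref{b18}, which involve constants depending on $\varepsilon,\mu,\eta,\kappa,\delta,n$ and on $T$ through $\int_0^T\|\mathrm{div}_x\overline{u}_n\|_{L^\infty}\,dt$, do not degenerate after finitely many iterations, so that a fixed finite number of extensions of length $T'$ covers $[0,T]$; here the $L^\infty$-in-time control of $u_n$ on $X_n$ together with \eqref{b5} is what closes the loop.
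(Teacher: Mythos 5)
Your proposal is correct and follows essentially the same route as the paper: derive the energy balance \eqref{b14} by testing the Galerkin equation with $u_{n}$ and rewriting the pressure, drag, quantum and high-order terms via \eqref{b10}--\eqref{b13}, use the resulting a priori bounds together with \eqref{b5} and norm equivalence on $X_{n}$ to continue the local solution to $[0,T]$, and then read off \eqref{b21} term by term, invoking Lemma \ref{lemma1} for the $\sqrt{\kappa\varepsilon}$ estimates and the Cauchy--Schwarz computations for $\sqrt{\xi_{n}}\overline{u}_{n}$ and $\sqrt{\xi_{n}}\widetilde{u}_{n}$. Your explicit attention to the continuation step is if anything more careful than the paper's brief ``repeat the argument'' remark.
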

\subsection{Passing to the limits as $n\rightarrow\infty$.}We fix $\varepsilon,\eta,\kappa,\mu,\delta,r_{0}>0$ and take first the limit as $n\rightarrow\infty$. Here we need the regularities of the solution, Lemma 2.2 and the following compactness results.
\subsubsection{Convergence of $\xi_{n}$.}
\begin{Lemma}\label{22}
The following estimates hold for any fixed positive constants $\varepsilon,\eta,\kappa,$
$\mu,\delta,r_{0}$:
\begin{equation}\label{b22}
\begin{aligned}
&\|\partial_{t}\sqrt{\xi_{n}}\|_{L^{\infty}(0,T;W^{-1,\frac{3}{2}})}+\|\sqrt{\xi_{n}}\|_{L^{2}(0,T;H^{2})}\leq K,\\
&\|\xi_{n}\|_{L^{\infty}(0,T;H^{5})}+\|\partial_{t}\xi_{n}\|_{L^{\infty}(0,T;W^{-1,\frac{3}{2}})}\leq K,\ \ \|\xi_{n}^{-10}\|_{L^{\frac{5}{3}}(0,T;L^{\frac{5}{3}})}\leq K,\\
\end{aligned}
\end{equation}
where $K$ is independent of $n$, depends on $\varepsilon,\eta,\delta$,$r_{0}$, initial data and $T$. Moreover, up to an extracted subsequence
\begin{equation}\label{b23}
\begin{aligned}
&\sqrt{\xi_{n}}\rightarrow\sqrt{\xi}\ \   strongly\ in\ L^{2}([0,T];H^{1}) \ \ and \ \ \sqrt{\xi_{n}}\rightarrow\sqrt{\xi}\ \ a.e.  ,\\
&\xi_{n}\rightarrow\xi\ \ strongly\ in\  C([0,T];H^{5}) \ \ and\ \ \xi_{n}\rightarrow\xi\ \ a.e. ,\\
&\xi_{n}^{-10}\rightarrow \xi^{-10}\ \ strongly\ in\  L^{1}(0,T;L^{1}) \ \ and \ \ \xi_{n}^{-10}\rightarrow \xi^{-10}\ \ a.e.
\end{aligned}
\end{equation}
\end{Lemma}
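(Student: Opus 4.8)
The plan is to extract all uniform bounds directly from the energy inequality \eqref{b20} of Proposition \ref{pro2} together with the parabolic smoothing coming from the viscous term $\varepsilon\Delta_x\xi$ in \eqref{b3}, and then use the Aubin--Lions lemma (Lemma \ref{le2}) to upgrade these bounds to strong convergence. First I would collect the density estimates: from \eqref{b20} the terms $\kappa\|\nabla_x\sqrt{\xi_n}\|_{L^\infty_tL^2}^2$, $\tfrac{\delta}{2}\|\nabla_x\Delta_x^2\xi_n\|_{L^\infty_tL^2}^2$ and $\tfrac{1}{11}\eta\|\xi_n^{-10}\|_{L^\infty_tL^1}$ are all bounded by the initial energy $E(\xi_0,u_0)$, which is finite because $(\xi_0,u_0)\in C^\infty(\overline\Omega)$ and $\xi_0\geq\varsigma>0$. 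Combining $\nabla_x\Delta_x^2\xi_n\in L^\infty_tL^2$ with the conservation of mass (so that $\xi_n$ is bounded in $L^\infty_tL^1$, hence in $L^\infty_tL^2$ by \eqref{b16} at this fixed level) gives $\xi_n$ bounded in $L^\infty(0,T;H^5)$. Likewise $\sqrt{\xi_n}$ is bounded in $L^\infty(0,T;H^1)$, and the extra gain $(\kappa\varepsilon)^{1/2}\|\sqrt{\xi_n}\|_{L^2_tH^2}\leq C$ from Lemma \ref{lemma1} gives $\sqrt{\xi_n}$ bounded in $L^2(0,T;H^2)$. Finally $\|\xi_n^{-10}\|_{L^{5/3}_tL^{5/3}}\leq K$ follows from interpolating the $L^\infty_tL^1$ bound on $\xi_n^{-10}$ with the $L^2_tL^2$-type bound on $\sqrt{\varepsilon\eta}\,\nabla_x\xi_n^{-5}$ (which gives $\xi_n^{-5}\in L^2_tH^1\hookrightarrow L^2_tL^6$, hence $\xi_n^{-10}\in L^1_tL^3$) via Gagliardo--Nirenberg (Lemma \ref{le1}).

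Next I would bound the time derivatives. Writing $\partial_t\xi_n=\varepsilon\Delta_x\xi_n-{\rm div}_x(\xi_n\overline u_n)$ and using the $L^\infty_tH^5$ bound on $\xi_n$ together with $\sqrt{\xi_n}\,\overline u_n\in L^\infty_tL^2$ and $\xi_n\in L^\infty$, one gets $\xi_n\overline u_n\in L^\infty_tL^2$, so $\partial_t\xi_n\in L^\infty(0,T;H^{-1})\hookrightarrow L^\infty(0,T;W^{-1,3/2})$. For $\partial_t\sqrt{\xi_n}$ one uses the identity $\partial_t\sqrt{\xi_n}=\tfrac{1}{2\sqrt{\xi_n}}\partial_t\xi_n=\tfrac{1}{2\sqrt{\xi_n}}(\varepsilon\Delta_x\xi_n-{\rm div}_x(\xi_n\overline u_n))$; expanding $\tfrac{\Delta_x\xi_n}{\sqrt{\xi_n}}$ and $\tfrac{{\rm div}_x(\xi_n\overline u_n)}{\sqrt{\xi_n}}$ in terms of $\sqrt{\xi_n}$, $\nabla_x\sqrt{\xi_n}$, $\nabla_x^2\sqrt{\xi_n}$ and $\sqrt{\xi_n}\,\overline u_n$ and using the bounds just obtained, each piece lands in $L^\infty_tW^{-1,3/2}$ after testing against $H^1_0$. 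This yields the first line of \eqref{b22}.

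Finally, for the convergences \eqref{b23}: since $\sqrt{\xi_n}$ is bounded in $L^2(0,T;H^2)$ with $\partial_t\sqrt{\xi_n}$ bounded in $L^\infty(0,T;W^{-1,3/2})$, Lemma \ref{le2}(I) with $X_0=H^2$, $X=H^1$, $X_1=W^{-1,3/2}$ gives $\sqrt{\xi_n}\to\sqrt{\xi}$ strongly in $L^2(0,T;H^1)$, hence a.e. after passing to a subsequence. For $\xi_n$, the bound in $L^\infty(0,T;H^5)$ together with $\partial_t\xi_n\in L^\infty(0,T;W^{-1,3/2})$ and Lemma \ref{le2}(II) (with $X_0=H^5$, $X=H^{5-\sigma}$ for small $\sigma>0$ compactly embedded, $X_1=W^{-1,3/2}$, $p=\infty$, $q=\infty>1$ — note one may instead use $q$ finite by working on the bounded interval, or interpolate down to $H^5$ strong convergence via the extra regularity) gives $\xi_n\to\xi$ strongly in $C([0,T];H^5)$ and a.e. Then $\xi_n^{-10}\to\xi^{-10}$ a.e. follows from the a.e. convergence of $\xi_n$ and continuity of $t\mapsto t^{-10}$ away from zero, while the uniform $L^{5/3}_tL^{5/3}$ bound combined with Lemma \ref{le3}(II) upgrades this to strong convergence in $L^1(0,T;L^1)$. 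The main obstacle I anticipate is the $\partial_t\sqrt{\xi_n}$ estimate: one must carefully distribute the factor $\xi_n^{-1/2}$ through the second-order term $\Delta_x\xi_n$ so that the worst term $\xi_n^{-1/2}\nabla_x^2\xi_n$ is rewritten using $\nabla_x^2\xi_n = 2\sqrt{\xi_n}\,\nabla_x^2\sqrt{\xi_n} + 2\nabla_x\sqrt{\xi_n}\otimes\nabla_x\sqrt{\xi_n}$, so that after dividing by $\sqrt{\xi_n}$ one is left with $2\nabla_x^2\sqrt{\xi_n}+2\xi_n^{-1/2}|\nabla_x\sqrt{\xi_n}|^2$, and the second term is controlled only because $\xi_n$ is bounded below at this fixed approximation level — this is precisely where the lower bound \eqref{b16}, and ultimately the cold-pressure term $\eta\nabla_x\xi_n^{-10}$, is essential.
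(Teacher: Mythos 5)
Your proposal follows essentially the same route as the paper: uniform bounds on the density read off from the energy inequality \eqref{b20} together with Lemma \ref{lemma1}, time-derivative bounds extracted from the regularized continuity equation \eqref{b3}, the $L^{\infty}_{t}L^{1}$--$L^{1}_{t}L^{3}$ interpolation for $\xi_{n}^{-10}$, and Aubin--Lions (Lemma \ref{le2}) plus Lemma \ref{le3} for the convergences. One correction is needed: in your treatment of $\partial_{t}\sqrt{\xi_{n}}$ you invoke the lower bound \eqref{b16}, but that bound is obtained from $\dim X_{n}<\infty$ and its constant depends on $n$, so it cannot yield a $K$ independent of $n$. The paper instead uses the $n$-uniform bound \eqref{b26}, $\|\xi_{n}^{-1}\|_{L^{\infty}}\leq C(\eta,\delta)$, obtained from the Sobolev-type inequality $\|\xi_{n}^{-1}\|_{L^{\infty}}\leq C(1+\|\xi_{n}^{-1}\|_{L^{3}})^{3}(1+\|\xi_{n}\|_{H^{k+2}}^{2})$ combined with the $\eta$- and $\delta$-controlled quantities in \eqref{b21}; since you correctly identify the cold pressure (and, implicitly, the $\delta$-regularization) as the ultimate source of the uniform lower bound, the fix is only to cite the right estimate. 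A minor point in your favour: your caveat that Aubin--Lions naturally gives compactness only in $C([0,T];H^{5-\sigma})$ is more careful than the paper's direct claim of strong convergence in $C([0,T];H^{5})$.
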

\begin{proof}
Since$ \sqrt{\xi_{n}}\in L^{\infty}(0,T;H^{1})$, we deduce that $ \sqrt{\xi_{n}}\in L^{\infty}(0,T;L^{6})$. Then

$$
\xi_{n} \overline{u}_{n}=\sqrt{\xi_{n}}\sqrt{\xi_{n}}\overline{u}_{n}\in L^{\infty}(0,T;L^{\frac{3}{2}}).
$$
By (\ref{b3}), we have
\begin{equation}\label{b24}
\begin{aligned}
\partial_{t}\xi_{n}&=\varepsilon\Delta_{x}\xi_{n}-{\rm div}_{x}(\xi_{n}\overline{ u}_{n})\\
&=\varepsilon\Delta_{x}\xi_{n}-{\rm div}_{x}(\sqrt{\xi_{n}}\sqrt{\xi_{n}}\overline{u}_{n}) \in L^{\infty}(0,T;W^{-1,\frac{3}{2}})
\end{aligned}
\end{equation}
By using (\ref{b24}), $\xi_{n}\in L^{\infty}(0,T;H^{5})\cap L^{2}(0,T;H^{6})$ and Lemma \ref{le2}, we can claim $\xi_{n}\in C([0,T];H^{5})$. So up to a subsequence, we have
$$\xi_{n}\rightarrow\xi\ \ \ strongly\ \ \ in\ C([0,T];H^{5})\  and \ \ \xi_{n}\rightarrow\xi\ \  a.e.$$

Next we claim that $\xi_{n}^{-10}$ is bounded in $L^{\frac{5}{3}}(0,T;L^{\frac{5}{3}})$.
 Thanks to $\nabla\xi_{n}^{-5}$ is bounded in ${L^{2}(0,T;L^{2})}$, and the Sobolev embedding theorem, we obtain $\xi_{n}$ is bounded in ${L^{1}(0,T;L^{3})}$.
 Note that
 $$
 \xi_{n}^{-10}\in L^{\infty}(0,T;L^{1}).
 $$
 Then we apply H$\ddot{o}$lder inequality to have
\begin{equation}\label{b25}
\begin{aligned}\|\xi_{n}^{-10}\|_{L^{\frac{5}{3}}(0,T;L^{\frac{5}{3}})}\leq \|\xi_{n}^{-10}\|_{L^{\infty}(0,T;L^{1})}^{\frac{2}{5}}\|\xi_{n}^{-10}\|_{L^{1}(0,T;L^{3})}
^{\frac{3}{5}}\leq K.
\end{aligned}
\end{equation}
 Meanwhile, by using the following Sobolve inequality (see\cite{dbb2006,ez2012})
$$
\|\xi_{n}^{-1}\|_{L^{\infty}} \leq C(1+\|\xi_{n}^{-1}\|_{L^{3}})^{3}(1+\|\xi_{n}\|_{H^{k+2}}^{2}),
$$
for $k\geq\frac{3}{2}$, and the estimates of density in (\ref{b21}) ,we get
\begin{equation}\label{b26}
\begin{aligned}
\|\xi_{n}^{-1}\|_{L^{\infty}}\leq C(\eta,\delta),\ \ \ a.e.\in(0,T)\times\Omega.
\end{aligned}
\end{equation}

Thus, we have $\xi_{n}^{-10}$converges almost everywhere to $ \xi^{-10}$. Thanks to (\ref{b25}) and lemma \ref{le3}, we have
$$\xi_{n}^{-10}\rightarrow \xi^{-10}\ \ \ strongly\ in\ L^{1}(0,T;L^{1}).$$
Using again the mass equation (\ref{b3}), we have
$$\partial_{t}\sqrt{\xi_{n}}=\frac{1}{2}\frac{1}{\sqrt{\xi_{n}}}(\varepsilon\Delta_{x}\xi_{n}-{\rm div}_{x}(\sqrt{\xi_{n}}\sqrt{\xi_{n}}\overline{u}_{n})),$$
which combines with (\ref{b26}) and $\partial_{t}\xi_{n}\in L^{\infty}(0,T;W^{-1,\frac{3}{2}})$, we have
$$
\partial_{t}\sqrt{\xi_{n}}\in L^{\infty}(0,T;W^{-1,\frac{3}{2}}).
$$
Notice that $\sqrt{\xi_{n}}\in L^{2}(0,T;H^{2})$, using  Lemma \ref{le2}, we get
$$
\sqrt{\xi_{n}}\rightarrow\sqrt{\xi}\ \ \ \ strongly\ in\ L^{2}([0,T];H^{1}) \ \ \ \and\ \ \sqrt{\xi_{n}}\rightarrow\sqrt{\xi}\ \ a.e..
$$
Then the proof of this lemma is completed.
\end{proof}
\subsubsection{ Convergence of momentum $\xi_{n}u_{n}$}
\begin{Lemma}\label{l2}
Up to an extracted subsequence, we have
$$\xi_{n}u_{n}\rightarrow\xi u\ \ strongly\ \ in \ \ L^{2}(0,T;L^{2}) \ \ \ and \ \ \xi_{n}u_{n}\rightarrow\xi u \ \ a.e..$$
\end{Lemma}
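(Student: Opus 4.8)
The plan is to obtain strong convergence of $\xi_n u_n$ via an Aubin--Lions argument applied to the momentum itself. First I would record the bounds already available: from Proposition \ref{pro2} we have $\sqrt{\xi_n}u_n$ bounded in $L^\infty(0,T;L^2)$ and, combined with $\sqrt{\xi_n}$ bounded in $L^\infty(0,T;H^1)\subset L^\infty(0,T;L^6)$, this gives $\xi_n u_n=\sqrt{\xi_n}\cdot\sqrt{\xi_n}u_n$ bounded in $L^\infty(0,T;L^{3/2})$. Moreover $\sqrt{\xi_n}D_x u_n$ bounded in $L^2(0,T;L^2)$ together with the lower bound \eqref{b16} (for fixed $\varepsilon,\eta,\kappa,\mu,\delta,r_0$, via \eqref{b26}) promotes this: $\nabla_x(\xi_n u_n)=\xi_n\nabla_x u_n+u_n\nabla_x\xi_n$, where $\xi_n\nabla_x u_n=\sqrt{\xi_n}\cdot\sqrt{\xi_n}\nabla_x u_n$ is controlled and $u_n\nabla_x\xi_n$ is controlled using $\nabla_x\xi_n=2\sqrt{\xi_n}\nabla_x\sqrt{\xi_n}\in L^\infty(0,T;L^2)$ and $u_n$ bounded in $L^\infty$ at this approximation level (or, more robustly, $\sqrt{\xi_n}u_n\in L^\infty L^2$ and $\nabla_x\sqrt{\xi_n}\in L^\infty L^2$ plus interpolation). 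Hence $\xi_n u_n$ is bounded in $L^\infty(0,T;W^{1,1})$, or in $L^2(0,T;W^{1,p})$ for some $p>1$.

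Next I would bound the time derivative $\partial_t(\xi_n u_n)$ in a negative-order space by reading it off from the momentum equation \eqref{b1}. Each term on the right-hand side — the viscous terms $\overline\nu_1{\rm div}_x(\xi_n D_x u_n)$, $\overline\nu_2\partial_z(\xi_n\partial_z u_n)$, $\varepsilon\nabla_x\xi_n\cdot\nabla_x u_n$, the convective terms ${\rm div}_x(\xi_n u_n\otimes u_n)$ and $\partial_z(\xi_n u_n w_n)$, the pressure $\nabla_x\xi_n$, the damping/drag $r_0 u_n+r\xi_n|u_n|u_n$, the capillarity $\mu\Delta^2 u_n$, the cold pressure $\eta\nabla_x\xi_n^{-10}$, and the quantum and higher-order terms $\kappa\xi_n\nabla_x(\Delta_x\sqrt{\xi_n}/\sqrt{\xi_n})$, $\delta\xi_n\nabla_x\Delta_x^5\xi_n$ — is bounded uniformly in $n$ in some $L^q(0,T;W^{-m,s})$ thanks to Proposition \ref{pro2} and Lemma \ref{22}; the worst terms are the high-derivative density terms, which are nevertheless controlled since $\xi_n$ is bounded in $L^\infty(0,T;H^5)$ and $L^2(0,T;H^6)$, and the capillary term since $\mu\Delta u_n\in L^2(0,T;L^2)$ so $\mu\Delta^2 u_n\in L^2(0,T;H^{-2})$. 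Collecting, $\partial_t(\xi_n u_n)$ is bounded in $L^q(0,T;W^{-m,s})$ for suitable finite $q$ and some $m,s$.

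With $\xi_n u_n$ bounded in $L^p(0,T;W^{1,r})$ (some $p\in(1,\infty]$, $r>1$) and $\partial_t(\xi_n u_n)$ bounded in $L^q(0,T;W^{-m,s})$, Lemma \ref{le2} with the compact embedding $W^{1,r}(\Omega)\hookrightarrow\hookrightarrow L^2(\Omega)$ (valid since $\Omega\subset\mathbb{R}^3$ is bounded) yields $\xi_n u_n\to\xi u$ strongly in $L^2(0,T;L^2)$, hence also almost everywhere along a further subsequence. The identification of the limit as $\xi u$ uses the already established strong convergence $\xi_n\to\xi$ in $C([0,T];H^5)$ from Lemma \ref{22} together with the weak limit of $\xi_n u_n$. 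The main obstacle is the bookkeeping for $\partial_t(\xi_n u_n)$: one must check that every forcing term — especially $\partial_z(\xi_n u_n w_n)$, where $w_n$ is the nonlocal expression in $\xi_n$ and $u_n$ — lands in a common dual space, which requires combining the $L^\infty L^2$ bounds on $\sqrt{\xi_n}u_n$, $\sqrt{\xi_n}\overline u_n$, $\sqrt{\xi_n}\widetilde u_n$ from \eqref{b21} with the pointwise bounds \eqref{b16} and \eqref{b26} that are available only at this fixed level of approximation.
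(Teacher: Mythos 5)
Your proposal follows essentially the same route as the paper: a spatial Sobolev bound on $\xi_n u_n$, a bound on $\partial_t(\xi_n u_n)$ in a negative-order Sobolev space read off from the momentum equation (including the rewriting of $\partial_z(\xi_n u_n w_n)$ via $\widetilde u_n$ and $\overline u_n$), Aubin--Lions compactness via Lemma \ref{le2}, and identification of the limit as $\xi u$ through the strong convergence $\xi_n\to\xi$ in $C([0,T];H^5)$ together with the weak convergence of $u_n$. The one slip is the claim that $W^{1,r}(\Omega)\hookrightarrow L^2(\Omega)$ is compact for every $r>1$ on a bounded three-dimensional domain (this requires $r>6/5$), but at this fixed approximation level it is harmless: $\mu\Delta u_n\in L^2(0,T;L^2)$ and the upper bound on $\xi_n$ give $\xi_n u_n\in L^2(0,T;H^1)$, which is exactly the bound the paper uses.
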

\begin{proof}
From the energy estimates, we know that $u_{n}$ is bounded in $L^{2}(0,T;L^{2})$. So up to a subsequence, we have
$$u_{n}\rightharpoonup u\ \ \ \ in\ \ L^{2}(0,T;L^{2}).$$
Recalling that $\xi_{n}\rightarrow \xi$ strongly in $C([0,T];H^{5})$, we have
$$\xi_{n}u_{n}\rightarrow \xi u\ \ \ \ stronly\ \ in\ \ L^{1}(0,T;L^{1}).$$
Moreover, since $\xi_{n}\in L^{\infty}(0,T;H^{5}), u_{n}\in L^{2}(0,T;H^{2}),\sqrt{\xi_{n}}\partial_{z}u_{n}\in L^{2}(0,T;L^{2})$, we can show $$\nabla(\xi_{n}u_{n})=\nabla_{x}\xi_{n}u_{n}+\xi_{n}\nabla_{x} u_{n}+\xi_{n}\partial_{z}u_{n}\in L^{2}(0,T;L^{2}).$$
This identity and $\xi_{n}u_{n}\in L^{2}(0,T;L^{2})$ give $\xi_{n}u_{n}\in L^{2}(0,T;H^{1})$.
Next we can prove
$$\partial _{t}(\xi_{n}u_{n})\in L^{2}(0,T;H^{-s})\ \ \ for\ some\ s>0.$$
In fact,

\begin{equation}\label{b27}
\begin{aligned}
\partial_{t}(\xi_{n} u_{n})=&-{\rm div}_{x}(\xi_{n} u_{n}\otimes u_{n})-\partial_{z}(\xi_{n} u_{n}w_{n})-\nabla_{x}\xi_{n}-r_{0} u_{n}-r\xi |u_{n}|u_{n}-\mu\Delta^{2}u_{n}\\
&+2\overline{\nu}_{1}{\rm div}_{x}(\xi_{n} D_{x}(u_{n}))+\overline{\nu}_{2}\partial_{z}(\xi_{n} \partial_{z}u_{n})+\varepsilon\nabla_{x}\xi_{n}\cdot\nabla_{x}u_{n}
+\eta\nabla_{x}\xi_{n}^{-10}\\
&+\kappa\xi_{n}\nabla_{x}(\dfrac{\Delta_{x}\sqrt{\xi_{n}}}{\sqrt{\xi_{n}}})
+\delta\xi_{n}\nabla_{x}\Delta_{x}^{5}\xi_{n}.\\
\end{aligned}
\end{equation}
where
\begin{equation}\label{b28}
\begin{aligned}
\partial_{z}(\xi_{n} u_{n}w_{n})&=\partial_{z}(\xi_{n} u_{n}(-\frac{{\rm div}_{x}(\xi_{n}\widetilde{u}_{n})}{\xi_{n}}+z\frac{{\rm div}_{x}(\xi_{n}\overline{u}_{n})}{\xi_{n}}))\\
&=\partial_{z}(-{\rm div}_{x}(\xi_{n}\widetilde{u}_{n}\otimes u_{n})+\xi_{n}\widetilde{u}_{n}\cdot\nabla_{x}u_{n}\\
&\ \ \ +z{\rm div}_{x}(\xi_{n}\overline{u}_{n}\otimes u_{n})-z\xi_{n}\overline{u}_{n}\cdot\nabla_{x}u_{n}).
\end{aligned}
\end{equation}
Based on the energy estimates (\ref{b21}), it is easy to check that $\partial_{t}(\xi_{n}u_{n})\in L^{2}(0,T;$ $H^{-5})$. Then, we can show
$$\xi_{n}u_{n} \rightarrow g \ \ strongly\ \ in\ L^{2}(0,T;L^{2}),\  for\  some\  function\  g\in L^{2}(0,T;L^{2}).$$
Moreover, since $\xi_{n}u_{n}\rightarrow \xi u \ strongly\ in\ L^{1}(0,T;L^{1}),$ we have $$\xi_{n}u_{n} \rightarrow \xi u\ \ strongly\ \ in\ L^{2}(0,T;L^{2}).$$
 Thus the proof of this lemma is completed.
\end{proof}

\subsubsection{ Convergence of $\sqrt{\xi_{n}}u_{n}$,\ Convergence of $\sqrt{\xi_{n}}\widetilde{u}_{n}$,\ Convergence of $\sqrt{\xi_{n}}\overline{u}_{n}$}
\begin{Lemma}\label{l9}
Up to an extracted subsequence, we have
\begin{equation*}
\begin{aligned}
&\sqrt{\xi_{n}}u_{n}\rightarrow\sqrt{\xi} u\   \ strongly\ \ in \ \ L^{2}(0,T;L^{2}),\\
&\sqrt{\xi_{n}}\widetilde{u}_{n}\rightarrow\sqrt{\xi} \widetilde{u}\ \  strongly\ \ in \ \ L^{2}(0,T;L^{2}),\\
&\sqrt{\xi_{n}}\overline{u}_{n}\rightarrow\sqrt{\xi} \overline{u}\  strongly\ \ in \ \ L^{2}(0,T;L^{2}).
\end{aligned}
\end{equation*}
\end{Lemma}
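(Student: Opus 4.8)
The plan is to derive all three strong limits from a single input already available, namely the strong convergence $\xi_n u_n\to\xi u$ in $L^2(0,T;L^2)$ of Lemma \ref{l2}, together with the uniform positivity of the density. At this approximation level the parameters $\varepsilon,\eta,\kappa,\mu,\delta,r_0$ are frozen, so the bound $\|\xi_n^{-1}\|_{L^\infty}\le C(\eta,\delta)$ from (\ref{b26}) holds uniformly in $n$. The structural fact that makes the averaged quantities tractable is that $\xi_n$ does not depend on $z$ (since $\partial_z\xi_n=0$), whence
\[
\xi_n\widetilde u_n=\int_0^z \xi_n u_n(\cdot,\tau)\,d\tau,\qquad \xi_n\overline u_n=\frac1h\int_0^h \xi_n u_n(\cdot,\tau)\,d\tau,
\]
so that the vertical integration commutes with multiplication by $\xi_n$.

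First I would show that $1/\sqrt{\xi_n}\to 1/\sqrt{\xi}$ strongly in $L^\infty((0,T)\times\Omega)$. By Lemma \ref{22}, $\xi_n\to\xi$ in $C([0,T];H^5)$, and $H^5(\Omega)\hookrightarrow C(\overline\Omega)$ in three dimensions, so $\xi_n\to\xi$ uniformly on $[0,T]\times\overline\Omega$; the uniform lower bound from (\ref{b26}) also passes to the limit, giving $\xi\ge c>0$. Since $|1/\sqrt{\xi_n}-1/\sqrt{\xi}|=|\xi-\xi_n|/(\sqrt{\xi_n}\sqrt{\xi}\,(\sqrt{\xi_n}+\sqrt{\xi}))$, the uniform convergence of $\xi_n$ and the uniform lower bounds give the claim. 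Consequently $\sqrt{\xi_n}u_n=(\xi_n u_n)\cdot(1/\sqrt{\xi_n})$ is a product of a sequence converging strongly in $L^2(0,T;L^2)$ and one converging strongly in $L^\infty((0,T)\times\Omega)$, hence converges strongly in $L^2(0,T;L^2)$ to $(\xi u)/\sqrt{\xi}=\sqrt{\xi}u$.

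For the averaged velocities, note that $T_1:f\mapsto\int_0^z f(\cdot,\tau)\,d\tau$ and $T_2:f\mapsto\frac1h\int_0^h f(\cdot,\tau)\,d\tau$ are bounded linear operators on $L^2(\Omega)$ (Cauchy--Schwarz in the $\tau$-integral), hence continuous; applying them to $\xi_n u_n\to\xi u$ in $L^2(0,T;L^2)$ and using the identities above gives $\xi_n\widetilde u_n\to\xi\widetilde u$ and $\xi_n\overline u_n\to\xi\overline u$ strongly in $L^2(0,T;L^2)$. Multiplying once more by $1/\sqrt{\xi_n}\to 1/\sqrt{\xi}$ in $L^\infty$ yields $\sqrt{\xi_n}\widetilde u_n\to\sqrt{\xi}\widetilde u$ and $\sqrt{\xi_n}\overline u_n\to\sqrt{\xi}\overline u$ strongly in $L^2(0,T;L^2)$, which is the assertion.

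The point to be careful about---and the reason the argument must be organised in this order---is that $u_n$ itself is only \emph{weakly} convergent in $L^2(0,T;L^2)$, so one cannot pass to the limit in any product still containing a bare $u_n$, $\widetilde u_n$ or $\overline u_n$; everything has to be routed through $\xi_n u_n$, whose strong convergence is the genuine input, and the identity $\xi_n\widetilde u_n=\int_0^z\xi_n u_n\,d\tau$ (valid precisely because $\partial_z\xi_n=0$) is what lets the vertical averaging inherit strong convergence. The uniform positivity of $\xi_n$ used to divide by $\sqrt{\xi_n}$ is available here only because the regularising parameters are switched on; at the later stages ($\eta\to0$, $\kappa=\delta\to0$) this particular step is not at our disposal and the analogous convergences will have to be obtained by different means.
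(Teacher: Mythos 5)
Your proof is correct, but it takes a genuinely different route from the one in the paper. The paper's proof is a Mellet--Vasseur type truncation argument: it first shows $\sqrt{\xi_n}u_n\to\sqrt{\xi}u$ a.e.\ (splitting according to whether the density vanishes), proves $L^2$-convergence of the truncations $\sqrt{\xi_n}u_n\mathbf{1}_{|u_n|\le M}$ via Lemma \ref{le3}, and controls the tails by the uniform $L^1$ bound on $\xi_n|u_n|^3$ (and, via Jensen's inequality as in (\ref{b33}), on $\xi_n|\overline{u}_n|^3$) before letting $M\to\infty$; the same scheme is then repeated for $\overline{u}_n$ and $\widetilde{u}_n$. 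You instead route everything through the strong $L^2$ convergence of $\xi_n u_n$ from Lemma \ref{l2} and divide by $\sqrt{\xi_n}$, which is legitimate here because the uniform-in-$n$ lower bound (\ref{b26}) and the convergence $\xi_n\to\xi$ in $C([0,T];H^5)\hookrightarrow C([0,T];C(\overline{\Omega}))$ give $1/\sqrt{\xi_n}\to 1/\sqrt{\xi}$ uniformly; your observation that $\partial_z\xi_n=0$ makes $\xi_n\widetilde{u}_n$ and $\xi_n\overline{u}_n$ images of $\xi_n u_n$ under bounded vertical averaging operators then disposes of the other two limits in one stroke. Your argument is shorter and avoids the truncation machinery entirely, at the price of depending essentially on the absence of vacuum at this approximation level; the paper's argument is longer but vacuum-insensitive, which is why the paper can invoke it again verbatim ("similar to the proof of Lemma \ref{l9}") in Section 6 when $\eta,\kappa,\delta,r_0\to 0$ and no lower bound on the density survives. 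You correctly flag this limitation yourself, so there is no gap -- just a trade-off of simplicity now against reusability later.
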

Remark:$\sqrt{\xi_{n}}u_{n}\rightarrow\sqrt{\xi} u,\ \sqrt{\xi_{n}}\widetilde{u}_{n}\rightarrow\sqrt{\xi}\widetilde{u}\ and\ \sqrt{\xi_{n}}\overline{u}_{n}\rightarrow\sqrt{\xi} \overline{u}  \ \ a.e.$.
\begin{proof}
Due to the above basic energy estimates, Fatou's Lemma gives
$$
\int_{0}^{T}\int_{\Omega}\xi |u|^{3}dxdzdt\leq\int_{0}^{T}\int_{\Omega}\liminf\xi_{n}|u_{n}|^{3}dxdzdt
\leq\liminf\int_{0}^{T}\int_{\Omega}\xi_{n}|u_{n}|^{3}dxdzdt,
$$
and hence $\xi |u|^{3}$ is bounded in $L^{1}(0,T;L^{1}(\Omega))$.

By Lemma 3.4, up to subsequence, we have
$\sqrt{\xi_{n}}\rightarrow\sqrt{\xi}$
and
$\xi_{n}u_{n}\rightarrow\xi u$ almost everywhere. Then, it's easy to show that
$$\sqrt{\xi_{n}}u_{n}\rightarrow \sqrt{\xi}u,\ \ a.e. \ in\ \{\xi_{n}(x,t)\neq 0\}.$$
And for almost every $(x,z,t)$ in $\{\xi_{n}(x,t)=0\}$, we have
$$\sqrt{\xi_{n}}u_{n}\textbf{l}_{|u_{n}|\leq M}\leq M\sqrt{\xi_{n}}=0=\sqrt{\xi}u\textbf{l}_{|u|\leq M}.$$

As a matter of fact, $\sqrt{\xi_{n}}u_{n}\textbf{l}_{|u_{n}|\leq M}$ converges to $\sqrt{\xi}u\textbf{l}_{|u|\leq M}$ almost everywhere for $(x,t)$. Meanwhile, $\sqrt{\xi_{n}}u_{n}\textbf{l}_{|u_{n}|\leq M}$ is uniformly bounded in $L^{\infty}(0,T;L^{6})$. Using Lemma \ref{le3}, we obtain
\begin{equation}\label{b29}
\sqrt{\xi_{n}}u_{n}\textbf{l}_{|u_{n}|\leq M}\rightarrow \sqrt{\xi}u\textbf{l}_{|u|\leq M}\ \ strongly\ \ in\ L^{2}(0,T;L^{2}).
\end{equation}
For any $M>0$, we have
\begin{equation}\label{b30}
\begin{aligned}
&\int_{0}^{T}\int_{\Omega}|\sqrt{\xi_{n}}u_{n}-\sqrt{\xi}u|^{2}dxdzdt\\
&\leq 2\int_{0}^{T}\int_{\Omega}|\sqrt{\xi_{n}}u_{n}\textbf{l}_{|u_{n}|\leq M}-\sqrt{\xi}u\textbf{l}_{|u|\leq M}|^{2}dxdzdt\\
&\ \ \ +4\int_{0}^{T}\int_{\Omega}|\sqrt{\xi_{n}}u_{n}\textbf{l}_{|u_{n}|\geq M}|^{2}dxdzdt+4\int_{0}^{T}\int_{\Omega}|\sqrt{\xi}u\textbf{l}_{|u|\geq M}|^{2}dxdzdt\\
&\leq2\int_{0}^{T}\int_{\Omega}|\sqrt{\xi_{n}}u_{n}\textbf{l}_{|u_{n}|\leq M}-\sqrt{\xi}u\textbf{l}_{|u|\leq M}|^{2}dxdzdt\\
&\ \ \ +\frac{4}{M}\int_{0}^{T}\int_{\Omega}\xi_{n}|u_{n}|^{3}dxdzdt+
\frac{4}{M}\int_{0}^{T}\int_{\Omega}\xi|u|^{3}dxdzdt\\
\end{aligned}
\end{equation}

Thanks to (\ref{b29}) and $\xi |u|^{3},\ \xi_{n} |u_{n}|^{3}$ are bounded in $L^{1}(0,T;L^{1}(\Omega))$, letting $M\rightarrow\infty$, we have
$$\sqrt{\xi_{n}}u_{n}\rightarrow\sqrt{\xi}u\ \ strongly\ in\ L^{2}(0,T;L^{2}).$$

Recalling that $\overline{u}=\frac{1}{h}\int_{0}^{h}u(\tau)d\tau$, similar to the above proof, we have
\begin{equation}\label{b31}
\sqrt{\xi_{n}}\overline{u}_{n}\textbf{l}_{|\overline{u}_{n}|\leq M}\rightarrow \sqrt{\xi}\overline{u}\textbf{l}_{|\overline{u}|\leq M}\ \ strongly\ \ in\ L^{2}(0,T;L^{2}).
\end{equation}
Then
\begin{equation}\label{b32}
\begin{aligned}
&\int_{0}^{T}\int_{\Omega}|\sqrt{\xi_{n}}\overline{u}_{n}-\sqrt{\xi}\overline{u}|^{2}dxdzdt\\
&\leq 2\int_{0}^{T}\int_{\Omega}|\sqrt{\xi_{n}}\overline{u}_{n}\textbf{l}_{|\overline{u}_{n}|\leq M}-\sqrt{\xi}\overline{u}\textbf{l}_{|\overline{u}|\leq M}|^{2}dxdzdt\\
&\ \ +4\int_{0}^{T}\int_{\Omega}|\sqrt{\xi_{n}}\overline{u}_{n}\textbf{l}_{|\overline{u}_{n}|\geq M}|^{2}dxdzdt+4\int_{0}^{T}\int_{\Omega}|\sqrt{\xi}\overline{u}\textbf{l}_{|\overline{u}|\geq M}|^{2}dxdzdt\\
&\leq2\int_{0}^{T}\int_{\Omega}|\sqrt{\xi_{n}}\overline{u}_{n}\textbf{l}_{|\overline{u}_{n}|\leq M}-\sqrt{\xi}\overline{u}\textbf{l}_{|\overline{u}|\leq M}|^{2}dxdzdt\\
&\ \ +\frac{4}{M}\int_{0}^{T}\int_{\Omega}\xi_{n}|\overline{u}_{n}|^{3}dxdzdt+
\frac{4}{M}\int_{0}^{T}\int_{\Omega}\xi|\overline{u}|^{3}dxdzdt.\\
\end{aligned}
\end{equation}
Due to the Fatou's Lemma and H$\ddot{o}$lder inequality, we get
\begin{equation}\label{b33}
\begin{aligned}
&\int_{0}^{T}\int_{\Omega}\xi |\overline{u}|^{3}dxdzdt\leq\int_{0}^{T}\int_{\Omega}\liminf\xi_{n}|\overline{u}_{n}|^{3}dxdzdt\\
&\leq\liminf\int_{0}^{T}\int_{\Omega}\xi_{n}|\overline{u}_{n}|^{3}dxdzdt\\
&\leq\liminf\int_{0}^{T}\int_{\Omega}\xi_{n}|\frac{1}{h}\int_{0}^{h}u_{n}(\tau)d\tau|^{3}dxdzdt\\
&\leq\liminf\frac{1}{h}\int_{0}^{T}\int_{\Omega}\xi_{n}\int_{0}^{h}|u_{n}(\tau)|^{3}d\tau dxdzdt\\
&\leq\liminf\frac{1}{h}\int_{0}^{h}\int_{0}^{T}\int_{\Omega_{x}}\int_{0}^{h}\xi_{n}|u_{n}(\tau)|^{3}d\tau dxdtdz\\
&\leq\liminf\frac{E_{0}(\xi_{0},u_{0})}{h}\int_{0}^{h}dz\leq E_{0}(\xi_{0},u_{0}).
\end{aligned}
\end{equation}
Then (\ref{b33}) and (\ref{b31}) gives
$$
\parallel\sqrt{\xi_{n}}\overline{u}_{n}-\sqrt{\xi}\overline{u}\parallel_{L^{2}(0,T;L^{2}(\Omega))}\leq_{}\frac{C}{\sqrt{M}}
$$
for fixed $C>0$. we take $M\rightarrow\infty$ to get
$$\sqrt{\xi_{n}}\overline{u}_{n}\rightarrow\sqrt{\xi}\overline{u}\ \ strongly\ in\ L^{2}(0,T;L^{2}).$$
Similarly, for $\widetilde{u}=\frac{1}{h}\int_{0}^{h}u(\tau)d\tau$, we can obtain
$$\sqrt{\xi_{n}}\widetilde{u}_{n}\rightarrow\sqrt{\xi}\widetilde{u}\ \ strongly\ in\ L^{2}(0,T;L^{2}).$$
Thus the proof of this lemma is completed.
\end{proof}
\subsubsection{ Convergence of $\partial_{z}(\xi_{n}u_{n}w_{n})$.}
Let $\varphi \in C^{\infty}_{0}([0,T];\Omega)$ be a smooth function with compact support such as $ \varphi(T,x,z)=0$ and $ \varphi(0,x,z)=\varphi_{0}(x,z)$. With (\ref{b28}), we have

\begin{equation}\label{b34}
\begin{aligned}
&\int_{0}^{T}\int_{\Omega}\partial_{z}(\xi_{n} u_{n}w_{n})\cdot\varphi dxdzdt
=-\int_{0}^{T}\int_{\Omega}\xi_{n} u_{n}w_{n}\cdot \partial_{z}\varphi dxdzdt\\
&=-\int_{0}^{T}\int_{\Omega}u_{n}(-{\rm div}_{x}(\xi_{n}\widetilde{u}_{n})+z{\rm div}_{x}(\xi_{n}\overline{u}_{n}))\cdot \partial_{z}\varphi dxdzdt\\
&=-\int_{0}^{T}\int_{\Omega}(-{\rm div}_{x}(\xi_{n}\widetilde{u}_{n}\otimes u_{n})+\xi_{n}\widetilde{u}_{n}\cdot\nabla_{x}u_{n}+z
{\rm div}_{x}(\xi_{n}\overline{u}_{n}\otimes u_{n})\\
&\ \ \ -z\xi_{n}\overline{u}_{n}\cdot\nabla_{x}u_{n})\cdot \partial_{z}\varphi dxdzdt\\
&=-\int_{0}^{T}\int_{\Omega}\xi_{n}\widetilde{u}_{n}\otimes u_{n}:\partial_{z}\nabla_{x}\varphi dxdzdt+\int_{0}^{T}\int_{\Omega}\xi_{n}\overline{u}_{n}\otimes u_{n}:z\partial_{z}\nabla_{x}\varphi dxdzdt\\
&\ \ \ -\int_{0}^{T}\int_{\Omega}\xi_{n}\widetilde{u}_{n}\cdot\nabla_{x}u_{n}\cdot\partial_{z}\varphi dxdzdt
+\int_{0}^{T}\int_{\Omega}\xi_{n}\overline{u}_{n}\cdot\nabla_{x}u_{n}\cdot z\partial_{z}\varphi dxdzdt.
\end{aligned}
\end{equation}
Since
\begin{equation*}
\begin{aligned}
&\int_{0}^{T}\int_{\Omega}\sqrt{\xi_{n}}\nabla_{x}u_{n}:\varphi dxdzdt\\
&=\int_{0}^{T}\int_{\Omega}(\nabla_{x}(\sqrt{\xi_{n}}u_{n})-\nabla_{x}\sqrt{\xi_{n}}\otimes u_{n}):\varphi dxdzdt\\
&=-\int_{0}^{T}\int_{\Omega}(\sqrt{\xi_{n}}u_{n})\cdot{\rm div}\varphi dxdzdt-\int_{0}^{T}\int_{\Omega}\nabla_{x}\sqrt{\xi_{n}}\otimes u_{n}:\varphi dxdzdt\\
&\rightarrow-\int_{0}^{T}\int_{\Omega}(\sqrt{\xi}u)\cdot{\rm div}\varphi dxdzdt-\int_{0}^{T}\int_{\Omega}\nabla_{x}\sqrt{\xi}\otimes u:\varphi dxdzdt\\
&=\int_{0}^{T}\int_{\Omega}\sqrt{\xi}\nabla_{x}u:\varphi dxdzdt,
\end{aligned}
\end{equation*}
as $n\rightarrow\infty$.
Hence
$$
\sqrt{\xi_{n}}\nabla_{x}u_{n}\rightarrow \sqrt{\xi}\nabla_{x}u \ \ weakly \ in\ \  L^{2}(0,T;L^{2}),
$$
and combines with (\ref{b34}), we have
$$\int_{0}^{T}\int_{\Omega}\partial_{z}(\xi_{n} u_{n}w_{n})\cdot\varphi dxdzdt
\rightarrow\int_{0}^{T}\int_{\Omega}\partial_{z}(\xi uw)\cdot\varphi dxdzdt
$$
as $n\rightarrow\infty$,
where
$$
w=-\frac{{\rm div}_{x}(\xi\widetilde{u})}{\xi}+z\frac{{\rm div}_{x}(\xi\overline{u})}{\xi}.
$$

\subsubsection{ Convergence of nonlinear diffusion terms.}

By direct computation, we have
\begin{equation}\label{b35}
\begin{aligned}
&\int_{0}^{T}\int_{\Omega} {\rm div}_{x}(\xi_{n}D_{x}(u_{n}))\varphi dxdzdt=-\int_{0}^{T}\int_{\Omega} (\xi_{n}D_{x}(u_{n})):\nabla_{x}\varphi dxdzdt\\
&=\frac{1}{2}\int_{0}^{T}\int_{\Omega}(\xi_{n}u_{n}\cdot\Delta_{x}\varphi+2\nabla_{x}\varphi\cdot\nabla_{x}\sqrt{\xi_{n}}
\cdot\sqrt{\xi_{n}}u_{n})dxdzdt\\
&\ \ \ +\frac{1}{2}\int_{0}^{T}\int_{\Omega}(\xi_{n}u_{n}\cdot{\rm div}_{x}(\nabla_{x}^{t}\varphi)+2\nabla_{x}^{t}\varphi\cdot\nabla_{x}\sqrt{\xi_{n}}
\cdot\sqrt{\xi_{n}}u_{n})dxdzdt.\\
\end{aligned}
\end{equation}
Since $\sqrt{\xi_{n}} \rightarrow \sqrt{\xi}\ \ strongly\ \ in\ L^{2}([0,T];H^{1}),\ \xi_{n}u_{n} \rightarrow \xi u\ \ strongly\ \ in\ L^{2}(0,T;L^{2})$, $\sqrt{\xi_{n}}u_{n}\rightarrow \sqrt{\xi}u\ \ strongly\ \ in\ L^{2}(0,T;L^{2})$, we obtain
\begin{equation}\label{b36}
\begin{aligned}
&\frac{1}{2}\int_{0}^{T}\int_{\Omega}(\xi_{n}u_{n}\cdot\Delta_{x}\varphi+2\nabla_{x}\varphi\cdot\nabla_{x}\sqrt{\xi_{n}}
\cdot\sqrt{\xi_{n}}u_{n})dxdzdt\\
&+\frac{1}{2}\int_{0}^{T}\int_{\Omega}(\xi_{n}u_{n}\cdot{\rm div}_{x}(\nabla_{x}^{t}\varphi)+2\nabla_{x}^{t}\varphi\cdot\nabla_{x}\sqrt{\xi_{n}}
\cdot\sqrt{\xi_{n}}u_{n})dxdzdt\\
\rightarrow&\frac{1}{2}\int_{0}^{T}\int_{\Omega}(\xi u\cdot\Delta_{x}\varphi+2\nabla_{x}\varphi\cdot\nabla_{x}\sqrt{\xi}
\cdot\sqrt{\xi}u)dxdzdt\\
&+\frac{1}{2}\int_{0}^{T}\int_{\Omega}(\xi u\cdot{\rm div}_{x}(\nabla_{x}^{t}\varphi)+2\nabla_{x}^{t}\varphi\cdot\nabla_{x}\sqrt{\xi}
\cdot\sqrt{\xi}u)dxdzdt.\\
\end{aligned}
\end{equation}
as $n\rightarrow\infty$.
Hence
$$
\int_{0}^{T}\int_{\Omega} {\rm div}_{x}(\xi_{n}D_{x}(u_{n}))\varphi dxdzdt\rightarrow\int_{0}^{T}\int_{\Omega} {\rm div}_{x}(\xi D_{x}(u))\varphi dxdzdt,
$$
when $n\rightarrow\infty$.
Similarly, we have
\begin{equation}\label{b37}
\begin{aligned}
\int_{0}^{T}\int_{\Omega}\xi_{n}\nabla_{x} \Delta_{x}^{5}\xi_{n}\varphi dxdzdt&=-\int_{0}^{T}\int_{\Omega}(\xi_{n}{\rm div}_{x}\varphi+\varphi\cdot\nabla_{x} \xi_{n})\Delta_{x}^{5}\xi_{n}dxdzdt\\
&=-\int_{0}^{T}\int_{\Omega}\Delta_{x}^{2}(\xi_{n}{\rm div}_{x}\varphi+\varphi\cdot\nabla_{x} \xi_{n})\Delta_{x}^{3}\xi_{n}dxdzdt.
\end{aligned}
\end{equation}
 The limit of the term $-\int_{0}^{T}\int_{\Omega}\varphi\cdot\nabla_{x}\Delta_{x}^{2}\xi_{n}\Delta_{x}^{3}\xi_{n}dxdzdt$ is difficult.
Due to  $\xi_{n}\rightarrow \xi$ strongly in $C([0,T];H^{5})$ and $\xi_{n} \rightharpoonup \xi$ weakly in $ L^{2}(0,T;H^{6})$, we have
$$-\int_{0}^{T}\int_{\Omega}\varphi\cdot\nabla_{x}\Delta_{x}^{2} \xi_{n}\Delta_{x}^{3}\xi_{n}dxdzdt
\rightarrow -\int_{0}^{T}\int_{\Omega}\varphi\cdot\nabla_{x}\Delta_{x}^{2} \xi\Delta_{x}^{3}\xi dxdzdt,$$ as $n\rightarrow\infty$.
Likewise, applying the above arguments, we can handle with the other terms of
$$-\int_{0}^{T}\int_{\Omega}\Delta_{x}^{2}(\xi_{n}{\rm div}_{x}\varphi+\varphi\cdot\nabla_{x} \xi_{n})\Delta_{x}^{3}\xi_{n}dxdzdt.$$
Thus we have
$$\int_{0}^{T}\int_{\Omega}\xi_{n}\nabla_{x} \Delta_{x}^{5}\xi_{n}\varphi dxdzdt\rightarrow\int_{0}^{T}\int_{\Omega}\xi\nabla_{x} \Delta_{x}^{5}\xi\varphi dxdzdt,$$
as $n\rightarrow\infty$.

To deal with the quantum term, we use the same arguments to obtain
\begin{equation*}
\begin{aligned}
&\int_{0}^{T}\int_{\Omega}\xi_{n}\nabla_{x}(\dfrac{\Delta_{x}\sqrt{\xi_{n}}}{\sqrt{\xi_{n}}})\varphi dxdzdt\\
&=-2\int_{0}^{T}\int_{\Omega}\Delta_{x}\sqrt{\xi_{n}}\nabla_{x}\sqrt{\xi_{n}}dxdzdt-\int_{0}^{T}\int_{\Omega}
\Delta_{x}\sqrt{\xi_{n}}\sqrt{\xi_{n}}{\rm div}_{x}\varphi dxdzdt\\
&\rightarrow-2\int_{0}^{T}\int_{\Omega}\Delta_{x}\sqrt{\xi}\nabla_{x}\sqrt{\xi}dxdzdt-\int_{0}^{T}\int_{\Omega}
\Delta_{x}\sqrt{\xi}\sqrt{\xi}{\rm div}_{x}\varphi dxdzdt\\
&=\int_{0}^{T}\int_{\Omega}\xi\nabla_{x}(\dfrac{\Delta_{x}\sqrt{\xi}}{\sqrt{\xi}})\varphi dxdzdt.
\end{aligned}
\end{equation*} as $n\rightarrow\infty$.

With the above compactness results in hand, we can take the limits as $n\rightarrow\infty$ in the approximate system (\ref{b3}) and (\ref{b8}). Thus, we can show that $(\xi,u,w)$ solves
$$\xi_{t}+{\rm div}_{x}(\rho \overline{u})=\varepsilon\Delta_{x}\rho,\ \ on\ (0,T)\times\Omega.$$
And for any test function $\varphi$, the following identity holds
\begin{equation}\label{b38}
\begin{aligned}
&\int_{\Omega}\xi u(T)\varphi dxdz-\int_{\Omega} m_{0}\varphi dxdz+\mu\int_{0}^{T}\int_{\Omega} \Delta u\cdot\Delta \varphi dxdzdt\\
&-\int_{0}^{T}\int_{\Omega}\xi uw\partial_{z}\varphi dxdzdt
-\int_{0}^{T}\int_{\Omega} (\xi u\otimes u)\cdot\nabla_{x} \varphi dxdzdt\\
&+2\overline{\nu}_{1}\int_{0}^{T}\int_{\Omega} \xi D_{x}(u)\cdot\nabla_{x} \varphi dxdzdt-\int_{0}^{T}\int_{\Omega} \xi{\rm div}_{x} \varphi dxdzdt\\
&+\overline{\nu}_{2}\int_{0}^{T}\int_{\Omega}\xi\partial_{z}u\partial_{z}\varphi dxdzdt+\eta\int_{0}^{T}\int_{\Omega} \xi^{-10}{\rm div}_{x}\varphi dxdzdt\\
&+\varepsilon\int_{0}^{T}\int_{\Omega} \nabla_{x}\xi\cdot\nabla_{x} u\varphi dxdzdt
+r_{0}\int_{0}^{T}\int_{\Omega} u\varphi dxdzdt\\
&+r\int_{0}^{T}\int_{\Omega} \xi |u|u \varphi dxdt-\delta\int_{0}^{T}\int_{\Omega} \xi \nabla_{x}\Delta_{x}^{5} \xi\varphi dxdzdt \\
&=-2\kappa\int_{0}^{T}\int_{\Omega}\Delta_{x}\sqrt{\xi}\nabla_{x}\sqrt{\xi}\varphi dxdzdt-\kappa\int_{0}^{T}\int_{\Omega}\Delta_{x}\sqrt{\xi}\sqrt{\xi}{\rm div}_{x}\varphi dxdzdt,
\end{aligned}
\end{equation}
where
$$
w=-\frac{{\rm div}_{x}(\xi\widetilde{u})}{\xi}+z\frac{{\rm div}_{x}(\xi\overline{u})}{\xi}.
$$
Thanks to the lower semicontinuity of norms, we can pass to the limits in the energy estimate (\ref{b20}), and we have the following energy inequality in the sense of distributions on $(0,T)$.
\begin{equation}\label{b39}
\begin{aligned}
&\sup_{t\in(0,T)}E(\xi,u)+
4\epsilon\int_{0}^{T}\int_{\Omega}|\nabla_{x}\sqrt{\xi}|^{2}dxdzdt+r_{0}\int_{0}^{T}\int_{\Omega}u^{2}dxdzdt\\
&+r\int_{0}^{T}\int_{\Omega}\xi|u|^{3}dxdzdt
+\int_{0}^{T}\int_{\Omega} 2\overline{\nu}_{1}\xi|D_{x}(u)|^{2}dxdzdt\\
&+\int_{0}^{T}\int_{\Omega}\overline{\nu}_{2}\xi|\partial_{z}u|^{2}dxdzdt+\mu\int_{0}^{T}\int_{\Omega} |\Delta u|^{2}dxdzdt\\
&+\frac{2}{5}\eta\epsilon\int_{0}^{T}\int_{\Omega}|\nabla_{x}\xi^{-5}|^{2}dxdzdt+\frac{\kappa\epsilon}{2}\int_{0}^{T}\int_{\Omega}\xi|\nabla_{x}\ln\xi|^{2}dxdzdt\\
&+\delta\epsilon\int_{0}^{T}\int_{\Omega}|\Delta^{3}_{x}\xi|^{2}dxdzdt
\leq E(\xi_{0},u_{0}),
\end{aligned}
\end{equation}
where
$$
E(\xi,u)=\int_{\Omega}\frac{1}{2}\xi u^{2}+\xi\ln\xi-\xi+1+\frac{1}{11}\eta\xi^{-10}+\kappa|\nabla_{x}\sqrt{\xi}|^{2}+
\frac{\delta}{2}|\nabla_{x}\Delta_{x}^{2}\xi|^{2}dxdz.
$$

Thus, we have the following proposition on the existence of weak solutions at this level approximate system.
\begin{Proposition}\label{pro3}
There exists a weak solution to the following system
\begin{equation}\label{b40}
\left\{\begin{array}{lll}
\partial_{t}\xi+{\rm div}_{x}(\xi \overline{u})=\varepsilon\Delta_{x}\xi,\\
\partial_{t}(\xi u)+{\rm div}_{x}(\xi u\otimes u)+\partial_{z}(\xi uw)+\nabla_{x}\xi+r_{0} u+r\xi |u|u+\mu\Delta^{2}u\\
=2\overline{\nu}_{1}{\rm div}_{x}(\xi D_{x}(u))+\overline{\nu}_{2}\partial_{z}(\xi \partial_{z}u)+\varepsilon\nabla_{x}\cdot\nabla_{x}u+
\eta\nabla_{x}\xi^{-10}+\\ \ \ \  \kappa\xi\nabla_{x}(\dfrac{\Delta_{x}\sqrt{\xi}}{\sqrt{\xi}})+\delta\xi\nabla_{x}\Delta_{x}^{5}\xi\\
\partial_{z}\xi=0.
\end{array}\right.
\end{equation}
with suitable initial data, for any $T>0$. In particular, the weak solutions $(\xi,u,w)$ satisfy the energy inequality (\ref{b39}).
\end{Proposition}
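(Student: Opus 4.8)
The plan is to assemble the Faedo--Galerkin construction carried out in this section into a single existence statement, so the argument proceeds in three stages: construction of Galerkin solutions, uniform estimates with global extension in time, and passage to the limit $n\to\infty$. Throughout we keep $\varepsilon,\eta,\kappa,\mu,\delta,r_{0}>0$ fixed and take smooth initial data with $\xi_{0}\geq\varsigma>0$.

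First I would set up the scheme. For a given velocity $u\in C([0,T];X_{n})$ the regularized continuity equation (\ref{b3}) is solved by the operator $\mathcal{S}$ of Proposition \ref{pro1}, producing a classical density $\xi=\mathcal{S}(u)$ with the two-sided bound (\ref{b5}) and depending Lipschitz-continuously on $u$ on short time intervals (Remark \ref{re1}). Inserting $w_{n}=-\,{\rm div}_{x}(\xi_{n}\widetilde{u}_{n})/\xi_{n}+z\,{\rm div}_{x}(\xi_{n}\overline{u}_{n})/\xi_{n}$ and $\xi_{n}=\mathcal{S}(u_{n})$ into the Galerkin momentum identity (\ref{b7}), I rewrite it through the invertible operators $\mathcal{M}[\xi]$ as the fixed-point equation (\ref{b8}) on $C([0,T];X_{n})$; since $\mathcal{S}$ and $\mathcal{M}^{-1}$ are Lipschitz and $X_{n}$ is finite dimensional, the Banach fixed-point theorem yields a unique local-in-time solution $(\xi_{n},u_{n},w_{n})$ on some $[0,T']$.

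Next, differentiating (\ref{b7}) in time and testing with $\varphi=u_{n}$ gives the energy balance (\ref{b9}), and the identities (\ref{b10})--(\ref{b13}) convert the pressure, drag, cold-pressure, quantum and highest-order terms into exact time derivatives plus nonnegative dissipation, producing (\ref{b14}). From (\ref{b14}) one reads off, for each fixed $n$, a bound on $\int_{0}^{T'}\|\Delta u_{n}\|_{L^{2}}^{2}$, hence via (\ref{b5}) and the equivalence of norms on $X_{n}$ a two-sided bound $0<c^{-1}\leq\xi_{n}\leq c$ and $\sup_{t}\|u_{n}\|_{L^{\infty}}\leq C$ on $[0,T']$; iterating this extends the solution to $[0,T]$ for any $T>0$. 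Integrating (\ref{b14}) in time yields the uniform estimate (\ref{b20}) of Proposition \ref{pro2} together with all the $n$-independent bounds (\ref{b21}) and the parabolic smoothing estimate (\ref{b19}) of Lemma \ref{lemma1}; in particular $\xi_{n}$ is bounded in $L^{\infty}(0,T;H^{5})\cap L^{2}(0,T;H^{6})$ and $\xi_{n}^{-1}$ is bounded in $L^{\infty}$. Then I pass to the limit along a subsequence: Lemma \ref{22} gives $\xi_{n}\to\xi$ in $C([0,T];H^{5})$, $\sqrt{\xi_{n}}\to\sqrt{\xi}$ in $L^{2}(0,T;H^{1})$, and $\xi_{n}^{-10}\to\xi^{-10}$ in $L^{1}(0,T;L^{1})$ and a.e.; Lemma \ref{l2} upgrades the weak $L^{2}$ limit of $u_{n}$ to strong convergence $\xi_{n}u_{n}\to\xi u$ in $L^{2}(0,T;L^{2})$ via Lemma \ref{le2} applied to $\partial_{t}(\xi_{n}u_{n})$; and Lemma \ref{l9} yields strong convergence of $\sqrt{\xi_{n}}u_{n}$, $\sqrt{\xi_{n}}\widetilde{u}_{n}$, $\sqrt{\xi_{n}}\overline{u}_{n}$ in $L^{2}(0,T;L^{2})$, together with $\sqrt{\xi_{n}}\nabla_{x}u_{n}\rightharpoonup\sqrt{\xi}\nabla_{x}u$ in $L^{2}(0,T;L^{2})$. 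These are exactly the ingredients needed to pass term by term to the limit in (\ref{b7}): the vertical transport $\partial_{z}(\xi_{n}u_{n}w_{n})$ after the rewriting (\ref{b28})--(\ref{b34}), and the nonlinear diffusion, quantum and $\delta$-terms after integrating by parts as in (\ref{b35})--(\ref{b37}) using the $H^{5}$/$H^{6}$ bounds on $\xi_{n}$. This gives (\ref{b38}), so $(\xi,u,w)$ with $w=-\,{\rm div}_{x}(\xi\widetilde{u})/\xi+z\,{\rm div}_{x}(\xi\overline{u})/\xi$ solves (\ref{b40}) in $\mathcal{D}'$; passing to the limit in (\ref{b20}) by weak lower semicontinuity of the convex energy and dissipation functionals yields (\ref{b39}).

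The main obstacle is the passage to the limit in the vertical transport term $\partial_{z}(\xi_{n}u_{n}w_{n})$: since $w_{n}$ is a nonlinear, nonlocal function of $\xi_{n}$ and $u_{n}$ carrying a singular factor $1/\xi_{n}$, one cannot pass to the limit directly. The remedy is the algebraic rewriting (\ref{b28}), which cancels the $1/\xi_{n}$ and leaves only products of the form (strongly convergent density--velocity quantities such as $\sqrt{\xi_{n}}\widetilde{u}_{n}$, $\sqrt{\xi_{n}}\overline{u}_{n}$, $\xi_{n}\widetilde{u}_{n}\otimes u_{n}$) times (weakly convergent $\nabla_{x}u_{n}$ or $\sqrt{\xi_{n}}\nabla_{x}u_{n}$), each of which converges by the Lemmas of this section; it is precisely the uniform lower bound $\xi_{n}^{-1}\in L^{\infty}$ coming from the $\eta\xi^{-10}$ term and the $H^{5}$-regularity coming from the $\delta$-term that make this rewriting and all the integrations by parts legitimate.
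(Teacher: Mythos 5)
Your proposal is correct and follows essentially the same route as the paper: local solvability of the fixed-point equation (\ref{b8}) via the Lipschitz operators $\mathcal{S}$ and $\mathcal{M}^{-1}$, global extension through the energy balance (\ref{b14}) and the finite-dimensional bounds (\ref{b15})--(\ref{b18}), and passage to the limit $n\to\infty$ using Lemmas \ref{22}, \ref{l2}, \ref{l9} together with the rewriting (\ref{b28}) of the vertical transport term and lower semicontinuity for (\ref{b39}). You also correctly isolate the key point that the $\eta$- and $\delta$-regularizations supply the lower bound on $\xi_{n}$ and the $H^{5}$-regularity that make the rewriting and the integrations by parts legitimate.
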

\section{B-D entropy and passing to the limits as $\varepsilon,\mu\rightarrow 0$}
In this section, the B-D entropy estimate for the approximate system in proposition \ref{pro3} will be obtained which was first introduced by Bresch and Desjardins in \cite{dbcl2003}. By (\ref{b26}) and (\ref{b21}), we have
\begin{equation}\label{c1}
\begin{aligned}
\ \xi(x,t)\geq C(\delta,\eta)>0,\ \ \xi\in L^{\infty}(0,T;H^{5})\cap L^{2}(0,T;H^{6}).
\end{aligned}
\end{equation}
\subsection{B-D entropy.}
Thanks to (\ref{c1}),
 we can use $\frac{\nabla\xi}{\xi}$ as a test function to test the momentum equation to derive the B-D entropy. To this end, we first take the gradient of the mass equation and multiply by $2\overline{\nu}_{1}$. Next we sum the obtained equation with the momentum  of system (\ref{b40})  to get
 \begin{equation}\label{c2}
\begin{aligned}
&\partial_{t}(\xi(u+2\overline{\nu}_{1}\nabla_{x}\ln\xi)+{\rm div}_{x}(\xi(u+2\overline{\nu}_{1}\nabla_{x}\ln\xi)\otimes u)+\partial_{z}(2\overline{\nu}_{1}\nabla_{x}(\xi w))+\partial_{z}(\xi uw)\\
&+\nabla_{x}\xi+r_{0}u+r\xi|u|u+\mu\Delta^{2}u={\rm div}_{x}(2\overline{\nu}_{1}\xi A_{x}(u))+\overline{\nu}_{2}\partial_{z}(\xi\partial_{z}u)
+\eta\nabla_{x}\xi^{-10}\\
&-\varepsilon\nabla_{x}\xi\cdot\nabla_{x}u+2\overline{\nu}_{1}\varepsilon\nabla_{x}\Delta_{x}\xi+
\kappa\xi\nabla_{x}(\dfrac{\Delta_{x}\sqrt{\xi}}{\sqrt{\xi}})+\delta\xi\nabla_{x}\Delta_{x}^{5}\xi\\
\end{aligned}
\end{equation}
 where $A_{x}(u)=\dfrac{\nabla_{x}u-\nabla_{x}^{t}u}{2}$.
\begin{Lemma}\label{l3} Under the assumption $(\ref{c1})$, we have the following B-D entropy
\begin{equation}\label{c3}
\begin{aligned}
&\int_{\Omega}(\frac{1}{2}\xi(u+2\overline{\nu}_{1} \nabla_{x}\ln \xi)^{2}-2\overline{\nu}_{1}r_{0}\ln\xi) dxdzdt
+2\overline{\nu}_{1}\int_{0}^{T}\int_{\Omega}\xi|\partial_{z}w|^{2}dxdzdt\\
&+8\overline{\nu}_{1}\int_{0}^{T}\int_{\Omega}
|\nabla_{x}\sqrt{\xi}|^{2}dxdzdt +r\int_{0}^{T}\int_{\Omega}\xi|u|^{3}dxdzdt+\overline{\nu}_{2}\int_{0}^{T}\int_{\Omega}\xi|\partial_{z}u|^{2}dxdzdt\\
&+\frac{8}{5}\eta\overline{\nu}_{1}\int_{0}^{T}\int_{\Omega}|\nabla_{x}\xi^{-5}|^{2}dxdzdt
+\mu\int_{0}^{T}\int_{\Omega}|\Delta u|^{2}dxdzdt\\
&+\kappa\overline{\nu}_{1}\int_{0}^{T}\int_{\Omega}\xi|\nabla_{x}^{2}\ln\xi|^{2}dxdzdt
+r_{0}\int_{0}^{T}\int_{\Omega}u^{2}dxdzdt
+2\delta\overline{\nu}_{1}\int_{0}^{T}\int_{\Omega}|\Delta_{x}^{3}\xi|^{2}dxdzdt\\
&+2\overline{\nu}_{1}\int_{0}^{T}\int_{\Omega}\xi|A_{x}(u)|^{2}dxdzdt
+\overline{\nu}_{1}r_{0}\varepsilon\int_{0}^{T}\int_{\Omega}\frac{|\nabla_{x}\xi|^{2}}{\xi^{2}}dxdzdt\\
&\leq\int_{\Omega}(\frac{1}{2}\xi_{0}(u_{0}+2\overline{\nu}_{1} \nabla_{x}\ln \xi_{0})^{2}
-2\overline{\nu}_{1}r_{0}\ln\xi_{0}) dxdz+E_{0}+C+\varepsilon C(\delta,\eta)+\sqrt{\mu}C(\delta,\eta)
\end{aligned}
\end{equation}

where $C$ is a generic positive constant depending on the initial data and other constants but independent of $\varepsilon,\delta,\eta,r_{0},\mu$, and $C(\delta,\eta)$is a generic positive constant  only depending on
$\delta,\eta$.
\end{Lemma}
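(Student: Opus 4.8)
The plan is to derive the Bresch--Desjardins entropy by using $\varphi = \nabla_x \ln\xi$ (equivalently $\nabla_x\xi/\xi$, which is admissible as a test function thanks to the uniform lower bound \eqref{c1} on $\xi$) tested against the reformulated momentum equation \eqref{c2}. The starting point is the observation that the combination $\xi(u + 2\overline{\nu}_1\nabla_x\ln\xi)$ satisfies \eqref{c2}, obtained by adding $2\overline{\nu}_1\nabla_x$ of the mass equation to the momentum equation. First I would multiply \eqref{c2} by $u + 2\overline{\nu}_1\nabla_x\ln\xi$ and integrate over $\Omega$; the transport structure produces $\frac{d}{dt}\int_\Omega \frac12\xi(u+2\overline{\nu}_1\nabla_x\ln\xi)^2\,dxdz$ plus the convective terms, which vanish or combine after integration by parts using the mass equation $\partial_t\xi + {\rm div}_x(\xi\overline{u}) = \varepsilon\Delta_x\xi$.

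The key step-by-step work is to identify the "good" (sign-definite) terms on the left. Testing the viscous term ${\rm div}_x(2\overline{\nu}_1\xi A_x(u))$ against $u$ gives $2\overline{\nu}_1\int\xi|A_x(u)|^2$; the cross term $2\overline{\nu}_1\nabla_x\ln\xi$ tested against the diffusion produces, after integration by parts and use of the continuity equation (as in \eqref{b10}--\eqref{b13}), the term $8\overline{\nu}_1\int|\nabla_x\sqrt\xi|^2$ plus the Bohm-type term $\kappa\overline{\nu}_1\int\xi|\nabla_x^2\ln\xi|^2$ from the quantum contribution, $2\delta\overline{\nu}_1\int|\Delta_x^3\xi|^2$ from the $\delta$-term, and $\frac{8}{5}\eta\overline{\nu}_1\int|\nabla_x\xi^{-5}|^2$ from the cold-pressure term. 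The vertical terms $\partial_z(\xi uw)$ and $\partial_z(2\overline{\nu}_1\nabla_x(\xi w))$, using the explicit representation $w = -{\rm div}_x(\xi\widetilde u)/\xi + z\,{\rm div}_x(\xi\overline u)/\xi$ and the decomposition \eqref{b28}, contribute $\overline{\nu}_2\int\xi|\partial_z u|^2$ and $2\overline{\nu}_1\int\xi|\partial_z w|^2$ after integration by parts in $z$ (the boundary terms dropping by \eqref{a5}). The damping $r\xi|u|u$ tested against $u$ gives $r\int\xi|u|^3$, and against $2\overline{\nu}_1\nabla_x\ln\xi$ gives a term absorbable by Young's inequality. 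The $r_0 u$ term is the source of both $r_0\int u^2$ and, tested against $2\overline{\nu}_1\nabla_x\ln\xi$, the term $-2\overline{\nu}_1 r_0\frac{d}{dt}\int\ln\xi$ together with $\overline{\nu}_1 r_0\varepsilon\int|\nabla_x\xi|^2/\xi^2$ (using the mass equation once more); this explains the $-2\overline{\nu}_1 r_0\ln\xi$ term sitting inside the energy functional.

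The remaining terms go to the right-hand side and must be controlled by the basic energy \eqref{b39} and the a priori bounds \eqref{b21}: the $\mu\Delta^2 u$ term tested against $2\overline{\nu}_1\nabla_x\ln\xi$ is estimated by $\sqrt\mu$ times a constant depending on $\delta,\eta$ (using $\|\sqrt\mu\Delta u\|_{L^2L^2}\le C$ and the $H^6$ bound on $\xi$); the artificial viscosity terms $-\varepsilon\nabla_x\xi\cdot\nabla_x u$ and $2\overline{\nu}_1\varepsilon\nabla_x\Delta_x\xi$ give contributions of size $\varepsilon\, C(\delta,\eta)$; the pressure $\nabla_x\xi$ tested against $2\overline{\nu}_1\nabla_x\ln\xi$ gives $-8\overline{\nu}_1\int|\nabla_x\sqrt\xi|^2$ of favorable sign (or is combined with the corresponding left-hand term). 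Finally one invokes the basic energy inequality \eqref{b39} to bound the genuinely lower-order pieces and collects constants into the stated $C$, $C(\delta,\eta)$. I expect the main obstacle to be the careful bookkeeping of the vertical terms via the $w$-representation --- ensuring that the integration by parts in $z$ on $\partial_z(2\overline{\nu}_1\nabla_x(\xi w))$ tested against $2\overline{\nu}_1\nabla_x\ln\xi$, combined with the convective vertical piece, really closes to the nonnegative quantity $2\overline{\nu}_1\int\xi|\partial_z w|^2$ without leaving uncontrolled cross terms; this requires repeated use of $\partial_z\xi = 0$ and of \eqref{a12}.
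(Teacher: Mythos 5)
Your plan coincides with the paper's proof: the paper likewise forms \eqref{c2} by adding $2\overline{\nu}_1\nabla_x$ of the mass equation to the momentum equation, tests with $u+2\overline{\nu}_1\nabla_x\ln\xi$, extracts exactly the sign-definite terms you list (with the $r_0$-term handled through the mass equation to produce $-2\overline{\nu}_1 r_0\ln\xi$ and $\overline{\nu}_1 r_0\varepsilon\int|\nabla_x\xi|^2/\xi^2$), and bounds the residual $\mu$- and $\varepsilon$-contributions by $\sqrt{\mu}\,C(\delta,\eta)$ and $\varepsilon\,C(\delta,\eta)$ using \eqref{b21} and the energy inequality \eqref{b39}. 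The only cosmetic difference is that the paper isolates the cross term $\int{\rm div}_x(\xi A_x(u))\cdot\nabla_x\ln\xi$ and shows it vanishes identically under periodicity, a detail your bookkeeping would uncover in carrying out the computation.
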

\begin{proof}
Multiplying (\ref{c2}) by $u+2\overline{\nu}_{1}\nabla_{x}\ln\xi$ and integrating over $\Omega$, we have
\begin{equation*}
\begin{aligned}
&\frac{d}{dt}\int_{\Omega}(\frac{1}{2}\xi(u+2\overline{\nu}_{1} \nabla_{x}\ln \xi)^{2} +(\xi\ln\xi-\xi+1)+\frac{1}{11}\eta\xi^{-10}+\kappa|\nabla_{x}\sqrt{\xi}|^{2}\\&+\frac{\delta}{2}|\nabla_{x}\Delta_{x}^{2}\xi|^{2}dxdz
+2\overline{\nu}_{1}\int_{\Omega}\xi|\partial_{z}w|^{2}dxdz+(8\overline{\nu}_{1}-4\varepsilon)\int_{\Omega}|\nabla_{x}\sqrt{\xi}|^{2}dxdz \\&+r\int_{\Omega}\xi|u|^{3}dxdz+\overline{\nu}_{2}\int_{\Omega}\xi|\partial_{z}u|^{2}dxdz
+\frac{2}{5}\eta(\varepsilon+4\overline{\nu}_{1})\int_{\Omega}|\nabla_{x}\xi^{-5}|^{2}dxdz\\
&+\mu\int_{\Omega}|\Delta_{x}u|^{2}dxdz+\frac{\kappa(\varepsilon+2\overline{\nu}_{1})}{2}\int_{\Omega}\xi|\nabla_{x}^{2}\ln\xi|^{2}dxdz
+\delta(\varepsilon+2\overline{\nu}_{1})\int_{\Omega}|\Delta_{x}^{3}\xi|^{2}dxdz\\&+2\overline{\nu}_{1}\int_{\Omega}\xi|A_{x}(u)|^{2}dxdz
+r_{0}\int_{\Omega}u^{2}dxdz\\
&=2\overline{\nu}_{1}\int_{\Omega}{\rm div}_{x}(2\overline{\nu}_{1}\xi A_{x}(u))\nabla_{x}\ln\xi dxdz
-2\overline{\nu}_{1}\mu\int_{\Omega}\Delta_{x} u\nabla_{x}\Delta_{x}\ln\xi dxdz\\
\end{aligned}
\end{equation*}
\begin{equation}\label{c4}
\begin{aligned}
&\ \ \ +2\overline{\nu}_{1}\varepsilon\int_{\Omega}\nabla_{x}\xi\cdot\nabla_{x}^{2}\ln\xi\cdot udxdz+
4\overline{\nu}_{1}^{2}\varepsilon\int_{\Omega}\nabla_{x}\xi\cdot\nabla_{x}^{2}\ln\xi\cdot\nabla_{x}\ln\xi dxdz\\
&\ \ \ +2\overline{\nu}_{1}\varepsilon\int_{\Omega}\nabla_{x}\Delta_{x}\xi(u+2\overline{\nu}_{1} \nabla_{x}\ln \xi)dxdz-2\overline{\nu}_{1}r\int_{\Omega}|u|u\nabla_{x}\xi dxdz\\
&\ \ \ -2\overline{\nu}_{1}r_{0}\int_{\Omega}u\frac{\nabla_{x}\xi}{\xi}dxdz
=\sum_{i=1}^{7}I_{i}.
\end{aligned}
\end{equation}

We claim that $I_{1}=0$ due to the periodic conditions. That is
\begin{equation}\label{c5}
\begin{aligned}
&2\int_{\Omega}{\rm div}_{x}(\xi A_{x}(u))\nabla_{x}\ln\xi dxdz=\int_{\Omega}\partial_{i}[\xi(\partial_{i}u_{j}-\partial_{j}u_{i})]\frac{\partial_{j}\xi}{\xi}dxdz\\
&=\int_{\Omega}\frac{\partial_{i}\xi\partial_{j}\xi}{\xi}(\partial_{i}u_{j}-\partial_{j}u_{i})
+(\partial_{i}\partial_{i}u_{j}-\partial_{i}\partial_{j}u_{i})\partial_{j}\xi dxdz\\
&=\int_{\Omega}\partial_{i}\partial_{i}u_{j}\partial_{j}\xi-\partial_{i}\partial_{j}u_{i}\partial_{j}\xi dxdz\\
&=\int_{\Omega}\partial_{i}\partial_{i}u_{j}\partial_{j}\xi-\partial_{j}\partial_{j}u_{i}\partial_{i}\xi dxdz=0,
\end{aligned}
\end{equation}

then $I_{1}=0$. For $I_{7}$, we have
\begin{equation}\label{c6}
\begin{aligned}
I_{7}&=-2\overline{\nu}_{1}r_{0}\int_{\Omega}\frac{{\rm div}_{x}(\xi u)}{\xi}dxdz
=2\overline{\nu}_{1}r_{0}\int_{\Omega}\frac{\partial_{t}\xi+\partial_{z}(\xi w)-\varepsilon\Delta_{x}\xi}{\xi}dxdz\\
&=2\overline{\nu}_{1}r_{0}\int_{\Omega}\partial_{t}\ln\xi dxdz-2\overline{\nu}_{1}r_{0}\varepsilon\int_{\Omega}\frac{\Delta_{x}\xi}{\xi}dxdz\\
&=2\overline{\nu}_{1}r_{0}\int_{\Omega}\partial_{t}\ln\xi dxdz-2\overline{\nu}_{1}r_{0}\varepsilon\int_{\Omega}\frac{|\nabla_{x}\xi|^{2}}{\xi^{2}}dxdz.
\end{aligned}
\end{equation}

Substituting (\ref{c4})-(\ref{c6}) into (\ref{c4}) and integrating it with respect to the time $t$ over [0,T], we have
\begin{equation}\label{c7}
\begin{aligned}
&\int_{\Omega}(\frac{1}{2}\xi(u+2\overline{\nu}_{1} \nabla_{x}\ln \xi)^{2}-2\overline{\nu}_{1}r_{0}\ln\xi) dxdzdt
+2\overline{\nu}_{1}\int_{0}^{T}\int_{\Omega}\xi|\partial_{z}w|^{2}dxdzdt\\
&+8\overline{\nu}_{1}\int_{0}^{T}\int_{\Omega}
|\nabla_{x}\sqrt{\xi}|^{2}dxdzdt +r\int_{0}^{T}\int_{\Omega}\xi|u|^{3}dxdzdt+\overline{\nu}_{2}\int_{0}^{T}\int_{\Omega}\xi|\partial_{z}u|^{2}dxdzdt\\
&+\frac{8}{5}\eta\overline{\nu}_{1}\int_{0}^{T}\int_{\Omega}|\nabla_{x}\xi^{-5}|^{2}dxdzdt
+\mu\int_{0}^{T}\int_{\Omega}|\Delta u|^{2}dxdzdt+\\
&\kappa\overline{\nu}_{1}\int_{0}^{T}
\int_{\Omega}\xi|\nabla_{x}^{2}\ln\xi|^{2}dxdzdt
+r_{0}\int_{0}^{T}\int_{\Omega}u^{2}dxdzdt
+2\delta\overline{\nu}_{1}\int_{0}^{T}\int_{\Omega}|\Delta_{x}^{3}\xi|^{2}dxdzdt\\
&+2\overline{\nu}_{1}\int_{0}^{T}\int_{\Omega}\xi|A_{x}(u)|^{2}dxdzdt
+2\overline{\nu}_{1}r_{0}\varepsilon\int_{0}^{T}\int_{\Omega}\frac{|\nabla_{x}\xi|^{2}}{\xi^{2}}dxdzdt\\
&\leq\int_{\Omega}(\frac{1}{2}\xi_{0}(u_{0}+2\overline{\nu}_{1} \nabla_{x}\ln \xi_{0})^{2}
-2\overline{\nu}_{1}r_{0}\ln\xi_{0}) dxdz+\int_{0}^{T}\sum_{i=2}^{6}I_{i}dt+E_{0},
\end{aligned}
\end{equation}
where we have used the energy inequality (\ref{b39}). Next we control the rest terms on the right hand of (\ref{c7}).
\begin{equation}\label{c8}
\begin{aligned}
\int&_{0}^{T}I_{2}dt=-2\overline{\nu}_{1}\mu\int_{0}^{T}\int_{\Omega}\Delta_{x}u\cdot\nabla_{x}\Delta_{x}\ln\xi dxdzdt\\
&=-2\overline{\nu}_{1}\mu\int_{0}^{T}\int_{\Omega}\Delta_{x}u\cdot(\frac{\nabla_{x}\Delta_{x}\xi}{\xi}-
\frac{\Delta_{x}\xi\nabla_{x}\xi}{\xi^{2}}-2\frac{(\nabla_{x}\xi\cdot\nabla_{x})\nabla_{x}\xi}{\xi^{2}}\\
&\ \ \ +2\frac{|\nabla_{x}\xi|^{2}\nabla_{x}\xi}{\xi^{3}}) dxdzdt\\
&\leq C\mu\int_{0}^{T}\int_{\Omega}|\Delta_{x} u|\big[\xi^{-1}|\nabla_{x}^{3}\xi|+\xi^{-2}|\nabla_{x}\xi||\nabla_{x}^{2}\xi|+\xi^{-3}
|\nabla_{x}\xi|^{3}\big]dxdzdt\\
&\leq C\sqrt{\mu}\|\sqrt{\mu}\Delta_{x} u\|_{L^{2}(L^{2})}\big[\|\xi^{-1}\|_{L^{\infty}(L^{\infty})}\|\nabla_{x}^{3}\xi\|_{L^{2}(L^{2})}\\
&\ \ \ +\|\xi^{-1}\|_{L^{\infty}(L^{\infty})}^{2}\|\nabla_{x}\xi\|_{L^{\infty}(L^{\infty})}\|\nabla_{x}
^{2}\xi\|_{L^{2}(L^{2})}
+\|\xi^{-1}\|_{L^{\infty}(L^{\infty})}^{3}\|\nabla_{x}\xi\|_{L^{6}
(L^{6})}^{3}\big]\\
&\leq C\sqrt{\mu}\|\sqrt{\mu}\Delta_{x} u\|_{L^{2}(L^{2})}\big[\|\xi^{-1}\|_{L^{\infty}(L^{\infty})}^{3}\|\nabla_{x}^{3}\xi\|_{L^{2}(L^{2})}
+\|\xi^{-1}\|_{L^{\infty}(L^{\infty})}\big]\\
&\leq C(\delta,\eta)\sqrt{\mu},
\end{aligned}
\end{equation}
\begin{equation}\label{c9}
\begin{aligned}
\int_{0}^{T}I_{4}dt&=4\overline{\nu}_{1}^{2}\varepsilon\int_{0}^{T}\int_{\Omega}\nabla_{x}\xi\cdot\nabla_{x}^{2}\ln\xi\cdot\nabla_{x}\ln\xi dxdzdt\\
&=-2\overline{\nu}_{1}^{2}\varepsilon\int_{0}^{T}\int_{\Omega}\Delta_{x}\xi|\nabla_{x}\ln\xi|^{2} dxdzdt\\
&\leq2\overline{\nu}_{1}^{2}\varepsilon\parallel\frac{1}{\xi}\parallel^{2}_{L^{\infty}(L^{\infty})}
\parallel\nabla_{x}\xi\parallel_{L^{\infty}(L^{\infty})}\parallel\nabla_{x}\xi\parallel_{L^{2}(L^{2})}\parallel
\Delta_{x}\xi\parallel_{L^{2}(L^{2})}\\
&\leq\varepsilon C(\delta,\eta).
\end{aligned}
\end{equation}
Besides this, we also have
\begin{equation}\label{c10}
\begin{aligned}
\int_{0}^{T}I_{3}dt&=2\overline{\nu}_{1}\varepsilon\int_{0}^{T}\int_{\Omega}\nabla_{x}\xi\cdot\nabla_{x}^{2}\ln\xi\cdot udxdzdt\\
&=-\overline{\nu}_{1}\varepsilon\int_{0}^{T}\int_{\Omega}|\frac{\nabla_{x}\xi}{\xi}|^{2}{\rm div}_{x}(\xi u)dxdzdt\\
&\leq\overline{\nu}_{1}\varepsilon\int_{0}^{T}\int_{\Omega}\frac{(\nabla_{x}\xi)^{2}}{\xi^{1.5}}\sqrt{\xi}\nabla_{x} u+\frac{(\nabla_{x}\xi)^{2}}{\xi^{2.5}}\sqrt{\xi}u\cdot\nabla_{x}\xi dxdzdt\\
&\leq \varepsilon \overline{\nu}_{1}\parallel\frac{1}{\xi}\parallel^{1.5}_{L^{\infty}(L^{\infty})}\parallel\nabla_{x}\xi\parallel_{L^{\infty}(L^{\infty})}
\parallel\nabla_{x}\xi\parallel_{L^{2}(L^{2})}\parallel\sqrt{\xi}\nabla_{x}u\parallel_{L^{2}(L^{2})}\\
&\ \ \ +\varepsilon \overline{\nu}_{1}\parallel\frac{1}{\xi}\parallel^{2.5}_{L^{\infty}(L^{\infty})}\parallel\nabla_{x}\xi\parallel^{2}_{L^{\infty}(L^{\infty})}
\parallel\nabla_{x}\xi\parallel_{L^{2}(L^{2})}\parallel\sqrt{\xi}u\parallel_{L^{2}(L^{2})}\\
&\leq\varepsilon\overline{\nu}_{1} C(\delta,\eta)(\parallel\sqrt{\xi}\nabla_{x}u\parallel_{L^{2}(L^{2})}+1)\\
&\leq \frac{\overline{\nu}_{1}}{2}\parallel\sqrt{\xi} A_{x}u\parallel_{L^{2}(L^{2})}^{2}+\varepsilon C(\delta,\eta),
\end{aligned}
\end{equation}
where we have used
$$\parallel\sqrt{\xi} A_{x}u\parallel_{L^{2}(L^{2})}^{2}+\parallel\sqrt{\xi} D_{x}u\parallel_{L^{2}(L^{2})}^{2}=\parallel\sqrt{\xi} \nabla_{x}u\parallel_{L^{2}(L^{2})}^{2}.
$$
And
\begin{equation}\label{c11}
\begin{aligned}
\int_{0}^{T}I_{5}dt&=2\overline{\nu}_{1}\varepsilon\int_{\Omega}\nabla_{x}\Delta_{x}\xi(u+2\overline{\nu}_{1} \nabla_{x}\ln \xi)dxdz\\
&\leq2\overline{\nu}_{1}\varepsilon\parallel\nabla_{x}\Delta_{x}\xi\parallel_{L^{2}(L^{2})}\parallel\sqrt{\xi}u\parallel_{L^{2}(L^{2})}
\parallel\sqrt{\xi}\parallel_{L^{\infty}(L^{\infty})}\\
&\ \ \ +4\overline{\nu}_{1}^{2}\varepsilon\parallel\nabla_{x}\Delta_{x}\xi\parallel_{L^{2}(L^{2})}
\parallel\nabla_{x}\xi\parallel_{L^{2}(L^{2})}
\parallel\xi^{-1}\parallel_{L^{\infty}(L^{\infty})}\\
&\leq \varepsilon C(\delta,\eta),
\end{aligned}
\end{equation}
\begin{equation}\label{c12}
\begin{aligned}
\int_{0}^{T}I_{6}dt&=-2\overline{\nu}_{1}r\int_{0}^{T}\int_{\Omega} |u|u\cdot\nabla_{x}\xi dxdzdt\\
&=2\overline{\nu}_{1}r\int_{0}^{T}\int_{\Omega} \xi(|u|{\rm div}_{x}u+u\frac{u}{|u|}\nabla_{x} u)dxdzdt\\
&\leq C\|\sqrt{\rho}u\|_{L^{2}}\|\sqrt{\rho}\nabla_{x} u\|_{L^{2}}\leq C+\frac{\overline{\nu}_{1}}{2}\parallel\sqrt{\xi}
A_{x}(u)\parallel_{L^{2}(L^{2})}^{2}.
\end{aligned}
\end{equation}

Then substituting (\ref{c8})-(\ref{c12}) into (\ref{c7}),  we obtain the B-D entropy (\ref{c3}).
\end{proof}

\subsection{Passing to the limits as $\mu,\varepsilon\rightarrow0$.} We denote the solutions to (\ref{b40}) at this level of approximation as $(\xi_{\mu,\varepsilon},u_{\mu,\varepsilon},w_{\mu,\varepsilon})$. From the energy inequality (\ref{b39}), it is easy to obtain the following uniform regularities.
\begin{equation}\label{c13}
\begin{aligned}
&\sqrt{\xi_{\mu,\varepsilon}}u_{\mu,\varepsilon}\in L^{\infty}(0,T;L^{2}),\eta\xi_{\mu,\varepsilon}^{-10}\in L^{\infty}(0,T;L^{1}),\xi_{n}^{\frac{1}{3}}u_{\mu,\varepsilon}\in L^{3}(0,T;L^{3}).\\
&\sqrt{\xi_{\mu,\varepsilon}}D_{x}(u_{\mu,\varepsilon})\in L^{2}(0,T;L^{2}),\sqrt{\xi_{\mu,\varepsilon}}\partial_{z}u_{\mu,\varepsilon}\in L^{2}(0,T;L^{2}),\sqrt{\mu}\Delta u_{\mu,\varepsilon}\in L^{2}(0,T;L^{2}),\\
&\sqrt{\delta}\xi_{\mu,\varepsilon}\in L^{\infty}(0,T;H^{5}),\sqrt{r_{0}}u_{\mu,\varepsilon}\in L^{2}(0,T;L^{2}),
\xi_{\mu,\varepsilon}\ln\xi_{\mu,\varepsilon}-\xi_{\mu,\varepsilon}+1\in L^{\infty}(0,T;L^{1}).
\end{aligned}
\end{equation}
The B-D entropy (\ref{c3}) gives the following additionally uniform regularities
\begin{equation}\label{c14}
\begin{aligned}
&\nabla_{x}\sqrt{\xi_{\mu,\varepsilon}}\in L^{\infty}(0,T;L^{2}),\ \sqrt{\delta}\xi_{\mu,\varepsilon}\in L^{2}(0,T;H^{6}),\ \sqrt{\xi_{\mu,\varepsilon}}\partial_{z}w\in L^{2}(0,T;L^{2}),\\
&\sqrt{\eta}\nabla\xi_{\mu,\varepsilon}^{-5}\in L^{2}(0,T;L^{2}), \sqrt{\xi_{\mu,\varepsilon}}A_{x} (u_{\mu,\varepsilon})\in L^{2}(0,T;L^{2}).
\end{aligned}
\end{equation}
Similar to the proof of Lemma \ref{lemma1}, we have the following uniform boundedness
$$
\sqrt{\kappa}\sqrt{\xi_{\mu,\varepsilon}}\in L^{2}(0,T;H^{2}),
 \sqrt[4]{\kappa}\nabla_{x}\xi_{\mu,\varepsilon}^{\frac{1}{4}}\in L^{4}(0,T;L^{4}).
 $$

Based on the above regularities, we have the following uniform compactness results.
\begin{Lemma}\label{l4}
Let $(\xi_{\mu,\varepsilon},u_{\mu,\varepsilon},w_{\mu,\varepsilon})$ be weak solutions to (\ref{b40}) satisfying (\ref{c13}) and (\ref{c14}), then the following estimates holds
\begin{equation}\label{c15}
\begin{aligned}
&\|\partial_{t}\sqrt{\xi_{\mu,\varepsilon}}\|_{L^{\infty}(0,T;W^{-1,\frac{3}{2}})}+\|\sqrt{\xi_{\mu,\varepsilon}}
\|_{L^{2}(0,T;H^{2})}\leq K,\\
&\parallel\xi_{\mu,\varepsilon}\parallel_{ L^{2}(0,T;H^{6})}+ \parallel\partial_{t}\xi_{\mu,\varepsilon}\parallel_{ L^{2}(0,T;H^{-1})}\leq K,\\
&\parallel\xi_{\mu,\varepsilon}u_{\mu,\varepsilon}\parallel_{ L^{2}(0,T;W^{1,\frac{3}{2}})}+ \parallel\partial_{t}(\xi_{\mu,\varepsilon}u_{\mu,\varepsilon})\parallel_{L^{2}(0,T;H^{-5})}\leq K,\\
& \parallel\xi_{\mu,\varepsilon}^{-10}\parallel_{ L^{\frac{5}{3}}(0,T;L^{\frac{5}{3}})}\leq K,\\
\end{aligned}
\end{equation}
where $K$ is independent of $ \varepsilon,\mu$.
\begin{proof}
The proof is similar to the compactness analysis in Section 2, we can prove the above resluts. For the simplicity, we omit the details here.
\end{proof}

With Lemma \ref{le2} and Lemma \ref{l4} in hand, we have the following compactness results as $\mu\rightarrow0=\varepsilon\rightarrow0$:
\begin{equation}\label{c16}
\begin{aligned}
&\sqrt{\xi_{\mu,\varepsilon}}\rightarrow \sqrt{\xi},\  strongly\ in\  L^{2}([0,T];H^{1}), \\
&\xi_{\mu,\varepsilon}\rightarrow \xi,\ strongly\ in\  C([0,T];H^{5}),\ \xi_{\mu,\varepsilon}\rightharpoonup \xi,\ in\ L^{2}(0,T;H^{6}),\ \\
&u_{\mu,\varepsilon}\rightharpoonup u,\ in\ L^{2}(0,T;L^{2}), \ \sqrt{\xi_{\mu,\varepsilon}}\rightharpoonup \sqrt{\xi},\ in\ L^{2}(0,T;H^{2}),\\
&\xi_{\mu,\varepsilon}u_{\mu,\varepsilon}\rightarrow \xi u\ strongly\ in\ L^{2}(0,T;L^{p}),\ for\ \forall \ 1\leq p<3,\\
&\xi_{\mu,\varepsilon}^{-10}\rightarrow \xi^{-10},\ strongly\ in\  L^{1}(0,T;L^{1}),\\
&\sqrt{\xi_{\mu,\varepsilon}}u_{\mu,\varepsilon}\rightarrow\sqrt{\xi}u\ \ strongly\ in\ L^{2}(0,T;L^{2}).
\end{aligned}
\end{equation}\end{Lemma}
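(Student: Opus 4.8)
The plan is to obtain \eqref{c15} by reading the four bounds off the energy inequality \eqref{b39} and the Bresch--Desjardins entropy \eqref{c3} together with the pointwise density control \eqref{c1}, mimicking the $n\to\infty$ analysis (Lemmas \ref{22}, \ref{l2}, \ref{l9}), and then feeding these bounds into the Aubin--Lions lemma (Lemma \ref{le2}) to get \eqref{c16}. The only thing that must be watched is that every constant is independent of $\varepsilon$ and $\mu$: since $\delta,\eta,\kappa,r_0$ are frozen at this level, \eqref{c1} gives $\xi_{\mu,\varepsilon}$ bounded and bounded below and $\xi_{\mu,\varepsilon}\in L^\infty(0,T;H^5)\cap L^2(0,T;H^6)$ with constants depending only on $\delta,\eta$, and it is precisely the $\delta$- and $\kappa$-dissipation in \eqref{b39}--\eqref{c3} that supplies the $L^2(0,T;H^6)$ bound on $\xi_{\mu,\varepsilon}$ and the $L^2$-bound on $\sqrt{\xi_{\mu,\varepsilon}}\,\nabla_x^2\ln\xi_{\mu,\varepsilon}$ uniformly in $\varepsilon,\mu$. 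For the density part: since $\xi_{\mu,\varepsilon}$ is pinched between positive constants $C(\delta,\eta)$, the chain rule transfers the $H^6$ bound to $\sqrt{\xi_{\mu,\varepsilon}}$, so $\|\sqrt{\xi_{\mu,\varepsilon}}\|_{L^2(0,T;H^2)}\le K$, while $\|\xi_{\mu,\varepsilon}\|_{L^2(0,T;H^6)}\le K$ is \eqref{c1}; writing the mass equation as $\partial_t\xi_{\mu,\varepsilon}=\varepsilon\Delta_x\xi_{\mu,\varepsilon}-\mathrm{div}_x(\sqrt{\xi_{\mu,\varepsilon}}\,\sqrt{\xi_{\mu,\varepsilon}}\,\overline u_{\mu,\varepsilon})$ one sees that with $\varepsilon\le1$ the first term stays bounded in $L^2(0,T;H^4)$, and since $\sqrt{\xi_{\mu,\varepsilon}}\in L^\infty(0,T;L^6)$, $\sqrt{\xi_{\mu,\varepsilon}}\,\overline u_{\mu,\varepsilon}\in L^\infty(0,T;L^2)$ and $\overline u_{\mu,\varepsilon}\in L^2(0,T;L^2)$, the convective term lies in $L^\infty(0,T;W^{-1,3/2})\cap L^2(0,T;H^{-1})$; this yields the two $\partial_t\xi_{\mu,\varepsilon}$ bounds, and dividing by $2\sqrt{\xi_{\mu,\varepsilon}}$ gives $\partial_t\sqrt{\xi_{\mu,\varepsilon}}\in L^\infty(0,T;W^{-1,3/2})$. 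Finally $\|\xi_{\mu,\varepsilon}^{-10}\|_{L^{5/3}(0,T;L^{5/3})}\le K$ follows exactly as in \eqref{b25}, interpolating $\xi_{\mu,\varepsilon}^{-10}\in L^1(0,T;L^3)$ (from $\nabla_x\xi_{\mu,\varepsilon}^{-5}\in L^2L^2$ in \eqref{c3}) against $\xi_{\mu,\varepsilon}^{-10}\in L^\infty(0,T;L^1)$.

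For the momentum part I would write $\nabla(\xi_{\mu,\varepsilon}u_{\mu,\varepsilon})=(\nabla_x\xi_{\mu,\varepsilon})u_{\mu,\varepsilon}+\sqrt{\xi_{\mu,\varepsilon}}(\sqrt{\xi_{\mu,\varepsilon}}\nabla_x u_{\mu,\varepsilon})+\sqrt{\xi_{\mu,\varepsilon}}(\sqrt{\xi_{\mu,\varepsilon}}\partial_z u_{\mu,\varepsilon})$, where $\nabla_x\xi_{\mu,\varepsilon}\in L^\infty L^\infty$ by Sobolev embedding of $H^5$ in dimension three, $u_{\mu,\varepsilon}\in L^2L^2$ from the $r_0u$ term, $\sqrt{\xi_{\mu,\varepsilon}}\in L^\infty L^6$, and $\sqrt{\xi_{\mu,\varepsilon}}\nabla_x u_{\mu,\varepsilon},\sqrt{\xi_{\mu,\varepsilon}}\partial_z u_{\mu,\varepsilon}\in L^2L^2$ (combining the $\sqrt\xi D_x u$, $\sqrt\xi A_x u$ and $\sqrt\xi\partial_z u$ bounds); hence $\nabla(\xi_{\mu,\varepsilon}u_{\mu,\varepsilon})\in L^2(0,T;L^{3/2})$ and, together with $\xi_{\mu,\varepsilon}u_{\mu,\varepsilon}\in L^2L^2$, this gives $\xi_{\mu,\varepsilon}u_{\mu,\varepsilon}\in L^2(0,T;W^{1,3/2})$. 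The bound on $\partial_t(\xi_{\mu,\varepsilon}u_{\mu,\varepsilon})$ in $L^2(0,T;H^{-5})$ is obtained by testing \eqref{b40} against $\varphi$ and estimating term by term, first substituting $w_{\mu,\varepsilon}=-\mathrm{div}_x(\xi_{\mu,\varepsilon}\widetilde u_{\mu,\varepsilon})/\xi_{\mu,\varepsilon}+z\,\mathrm{div}_x(\xi_{\mu,\varepsilon}\overline u_{\mu,\varepsilon})/\xi_{\mu,\varepsilon}$ and rewriting $\partial_z(\xi_{\mu,\varepsilon}u_{\mu,\varepsilon}w_{\mu,\varepsilon})$ as in \eqref{b28}, so that only $\xi_{\mu,\varepsilon}\widetilde u_{\mu,\varepsilon}\otimes u_{\mu,\varepsilon}$, $\xi_{\mu,\varepsilon}\overline u_{\mu,\varepsilon}\otimes u_{\mu,\varepsilon}$ and $\sqrt{\xi_{\mu,\varepsilon}}\nabla_x u_{\mu,\varepsilon}$ are paired against derivatives of $\varphi$; the highest-order terms are $\mu\Delta^2u_{\mu,\varepsilon}$, which I would write as $\sqrt\mu\,\Delta(\sqrt\mu\Delta u_{\mu,\varepsilon})$ so that with $\mu\le1$ it is bounded in $L^2(0,T;H^{-2})$ by $\|\sqrt\mu\Delta u_{\mu,\varepsilon}\|_{L^2L^2}$ uniformly in $\mu$, and $\delta\xi_{\mu,\varepsilon}\nabla_x\Delta_x^5\xi_{\mu,\varepsilon}$, handled as in \eqref{b37} by moving five derivatives onto $\varphi$ so that only $\Delta_x^3\xi_{\mu,\varepsilon}\in L^2L^2$ and $\nabla_x\Delta_x^2\xi_{\mu,\varepsilon}\in L^\infty L^2$ appear; the remaining diffusion, pressure, damping and quantum terms are all lower order and uniformly bounded by the same regularities exactly as in Section 3. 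This completes \eqref{c15}.

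To pass to \eqref{c16}, apply Lemma \ref{le2} to $H^2\hookrightarrow\hookrightarrow H^1\hookrightarrow W^{-1,3/2}$ for $\sqrt{\xi_{\mu,\varepsilon}}$, to $H^6\hookrightarrow\hookrightarrow H^5\hookrightarrow H^{-1}$ for $\xi_{\mu,\varepsilon}$ (interpolating the $L^\infty H^5$ and $L^2 H^6$ bounds with $\partial_t\xi_{\mu,\varepsilon}$, as in the proof of Lemma \ref{22}), and to $W^{1,3/2}\hookrightarrow\hookrightarrow L^p\hookrightarrow H^{-5}$ for $\xi_{\mu,\varepsilon}u_{\mu,\varepsilon}$, using $W^{1,3/2}(\Omega)\hookrightarrow L^3(\Omega)$ in dimension three; this produces the strong convergences of $\sqrt{\xi_{\mu,\varepsilon}}$ in $L^2(0,T;H^1)$, of $\xi_{\mu,\varepsilon}$ in $C([0,T];H^5)$ (with $\xi_{\mu,\varepsilon}\rightharpoonup\xi$ weakly in $L^2(0,T;H^6)$) and of $\xi_{\mu,\varepsilon}u_{\mu,\varepsilon}$ in $L^2(0,T;L^p)$, $1\le p<3$, together with the weak limits $u_{\mu,\varepsilon}\rightharpoonup u$ in $L^2(0,T;L^2)$ and $\sqrt{\xi_{\mu,\varepsilon}}\rightharpoonup\sqrt\xi$ in $L^2(0,T;H^2)$. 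The a.e.\ convergence of $\xi_{\mu,\varepsilon}$ plus the uniform $L^{5/3}$ bound and Lemma \ref{le3} give $\xi_{\mu,\varepsilon}^{-10}\to\xi^{-10}$ strongly in $L^1(0,T;L^1)$; finally $\sqrt{\xi_{\mu,\varepsilon}}u_{\mu,\varepsilon}\to\sqrt\xi u$ a.e.\ (from the a.e.\ convergence of $\sqrt{\xi_{\mu,\varepsilon}}$ and of $\xi_{\mu,\varepsilon}u_{\mu,\varepsilon}$), and the truncation argument of Lemma \ref{l9} — splitting according to $|u_{\mu,\varepsilon}|\le M$ versus $|u_{\mu,\varepsilon}|>M$ and using the uniform bound on $\int_0^T\int_\Omega\xi_{\mu,\varepsilon}|u_{\mu,\varepsilon}|^3\,dxdzdt$ provided by the damping term in \eqref{b39} — upgrades this to strong convergence in $L^2(0,T;L^2)$. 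I expect the main obstacle to be the uniform-in-$(\varepsilon,\mu)$ bound $\partial_t(\xi_{\mu,\varepsilon}u_{\mu,\varepsilon})\in L^2(0,T;H^{-5})$, namely keeping the singular term $\mu\Delta^2u_{\mu,\varepsilon}$ and the tenth-order density term $\delta\xi_{\mu,\varepsilon}\nabla_x\Delta_x^5\xi_{\mu,\varepsilon}$ under control with the weak norms available, and correctly reorganizing $\partial_z(\xi u w)$ through the representation of $w$ so that no uncontrolled derivative of $u_{\mu,\varepsilon}$ survives.
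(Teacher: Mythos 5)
Your proposal is correct and follows exactly the route the paper intends: the paper's "proof" is a one-line reference to the earlier compactness analysis (Lemmas \ref{22}, \ref{l2}, \ref{l9}), and you reproduce that analysis while correctly identifying the only genuinely new point, namely that every bound must come from the $\varepsilon,\mu$-independent terms of \eqref{b39}, \eqref{c3} and \eqref{c1} (e.g.\ taking $\nabla_x\xi_{\mu,\varepsilon}^{-5}\in L^2L^2$ from the B--D entropy rather than from the $\eta\epsilon$-weighted term of the energy, and writing $\mu\Delta^2u_{\mu,\varepsilon}=\sqrt{\mu}\,\Delta(\sqrt{\mu}\Delta u_{\mu,\varepsilon})$). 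No gaps.
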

Hence, $\sqrt{\xi_{\mu,\varepsilon}}\rightarrow\sqrt{\xi},\ \xi_{\mu,\varepsilon}\rightarrow\xi \ and\ \xi_{\mu,\varepsilon}^{-10}\rightarrow \xi^{-10}$ almost everywhere.

Thanks to $\sqrt{\xi_{\mu,\varepsilon}}\partial_{z}w_{\mu,\varepsilon}\in L^{2}(0,T;L^{2})$, by the Poincar\'{e} inequality, we have
$$
\int_{0}^{h}|\sqrt{\xi_{\mu,\varepsilon}}w_{\mu,\varepsilon}|^{2}dz\leq c\int_{0}^{h}|\partial_{z}(\sqrt{\xi_{\mu,\varepsilon}}w_{\mu,\varepsilon})|^{2}dz.
$$
Thus the estimates
$$
\int_{0}^{T}\int_{\Omega}|\sqrt{\xi_{\mu,\varepsilon}}w_{\mu,\varepsilon}|^{2}dxdzdt\leq c\int_{0}^{T}\int_{\Omega}\xi_{\mu,\varepsilon}|\partial_{z}w_{\mu,\varepsilon}|^{2}dxdzdt
$$
gives $\sqrt{\xi_{\mu,\varepsilon}}w_{\mu,\varepsilon}\in L^{2}(0,T;L^{2}) $. Consequently, there exists,
up to a subsequence , $\sqrt{\xi_{\mu,\varepsilon}}w_{\mu,\varepsilon}\rightharpoonup l\in L^{2}(0,T;L^{2})$ weakly. Since $\sqrt{\xi_{\mu,\varepsilon}}\rightarrow \sqrt{\xi}$ strongly, we define $w$ as following:
\begin{equation}\label{c17}
w=\begin{cases}
\frac{l}{\sqrt{\xi}}\ \ \ \ \  \ \ \ if\ \ \xi>0,\\
0 \ \ \ a.e.\ \ if \ \ \xi=0.
\end{cases}
\end{equation}
Then the limit $l=\sqrt{\xi}(\dfrac{l}{\sqrt{\xi}})=\sqrt{\xi}w$.

By the above regularity results, we are ready to pass to the limits as $\mu=\varepsilon\rightarrow0$.
For any test function $\varphi\in C^{\infty}([0,T];\Omega)$, when $\mu=\varepsilon\rightarrow0$, we have
\begin{equation}\label{c18}
\begin{aligned}
\int_{0}^{T}\int_{\Omega}\partial_{z}(\xi_{\mu,\varepsilon}u_{\mu,\varepsilon}w_{\mu,\varepsilon})\varphi dxdzdt&=-\int_{0}^{T}\int_{\Omega}\xi_{\mu,\varepsilon}u_{\mu,\varepsilon}w_{\mu,\varepsilon}\partial_{z}\varphi dxdzdt\\
&\rightarrow-\int_{0}^{T}\int_{\Omega}\xi uw\partial_{z}\varphi dxdzdt,
\end{aligned}
\end{equation}
and
\begin{equation}\label{c19}
\begin{aligned}
\int_{0}^{T}\int_{\Omega}\partial_{z}(\xi_{\mu,\varepsilon}w_{\mu,\varepsilon})\varphi dxdzdt&=-\int_{0}^{T}\int_{\Omega}\xi_{\mu,\varepsilon}w_{\mu,\varepsilon}\partial_{z}\varphi dxdzdt\\
&\rightarrow-\int_{0}^{T}\int_{\Omega}\xi w\partial_{z}\varphi dxdzdt.
\end{aligned}
\end{equation}
Moreover,
\begin{equation}\label{c20}
\begin{aligned}
&\varepsilon\int_{0}^{T}\int_{\Omega}\nabla_{x}\xi_{\mu,\varepsilon}\cdot\nabla_{x} u_{\mu,\varepsilon}\varphi dxdzdt\\
&=-\varepsilon\int_{0}^{T}\int_{\Omega}\Delta_{x}\xi_{\mu,\varepsilon} u_{\mu,\varepsilon}\varphi dxdzdt-\varepsilon\int_{0}^{T}\int_{\Omega}\nabla_{x}\xi_{\mu,\varepsilon}u_{\mu,\varepsilon}{\rm div}_{x}\varphi dxdzdt\\
&\leq \varepsilon\|\Delta_{x}\xi_{\mu,\varepsilon}\|_{L^{2}(L^{2})}\| u_{\mu,\varepsilon}\|
_{L^{2}(L^{2})}\|\varphi\|_{L^{\infty}(L^{\infty})}\\
&\ \ \ +\varepsilon\|\nabla_{x}\xi_{\mu,\varepsilon}\|_{L^{2}(L^{2})}\| u_{\mu,\varepsilon}\|
_{L^{2}(L^{2})}\|{\rm div}_{x}\varphi\|_{L^{\infty}(L^{\infty})}\rightarrow0,\ \ as\ \ \varepsilon\rightarrow0,
\end{aligned}
\end{equation}
and
\begin{equation}\label{c21}
\begin{aligned}
\mu\int_{0}^{T}\int_{\Omega}\Delta^{2}u_{\mu,\varepsilon}\varphi dxdzdt
&=\mu\int_{0}^{T}\int_{\Omega}\Delta
u_{\mu,\varepsilon}\Delta\varphi dxdzdt\\
&\leq \sqrt{\mu}\|\sqrt{\mu}\Delta u_{\mu,\varepsilon}\|_{L^{2}(L^{2})}\|\Delta \varphi\|_{L^{2}(L^{2})}\rightarrow0,\ as\ \mu\rightarrow0.
\end{aligned}
\end{equation}

So by taking $\mu=\varepsilon\rightarrow0$ in $(\ref{b40})$, we have
\begin{equation}\label{c22}
\left\{\begin{array}{lll}
\partial_{t}\xi+{\rm div}_{x}(\xi u)+\partial_{z}(\xi w)=0,\\
\partial_{t}(\xi u)+{\rm div}_{x}(\xi u\otimes u)+\partial_{z}(\xi uw)+\nabla_{x}\xi+r_{0} u+r\xi |u|u\\
=2\overline{\nu}_{1}{\rm div}_{x}(\xi D_{x}(u))+\overline{\nu}_{2}\partial_{z}(\xi \partial_{z}u)+
\eta\nabla_{x}\xi^{-10}+\kappa\xi\nabla_{x}(\dfrac{\Delta_{x}\sqrt{\xi}}{\sqrt{\xi}})
+\delta\xi\nabla_{x}\Delta_{x}^{5}\xi,\\
\partial_{z}\xi=0,
\end{array}\right.
\end{equation}
holds in the sense of distribution on $(0; T)\times\Omega$.

Furthermore, thanks to lower semi-continuity of the convex function and the strong convergence of $(\xi_{\mu,\varepsilon},u_{\mu,\varepsilon},w_{\mu,\varepsilon})$, we can pass to the limits in the energy inequality (\ref{b39}) and B-D entropy (\ref{c3}) as $\mu=\varepsilon\rightarrow0$ with $\delta,\eta,r_{0},\kappa$ being fixed. And we have
\begin{equation}\label{c23}
\begin{aligned}
&\int_{\Omega}\frac{1}{2}\xi u^{2}+\xi\ln\xi-\xi+1+\frac{1}{11}\eta\xi^{-10}+\kappa|\nabla_{x}\sqrt{\xi}|^{2}+
\frac{\delta}{2}|\nabla_{x}\Delta_{x}^{2}\xi|^{2}dxdz\\
&+r_{0}\int_{0}^{T}\int_{\Omega}u^{2}dxdzdt+r\int_{0}^{T}\int_{\Omega}\xi|u|^{3}dxdzdt\\
&+\int_{0}^{T}\int_{\Omega} 2\overline{\nu}_{1}\xi|D_{x}(u)|^{2}dxdzdt
+\int_{0}^{T}\int_{\Omega}\overline{\nu}_{2}\xi|\partial_{z}u|^{2}dxdzdt\\
&\leq \int_{\Omega}\frac{1}{2}\xi_{0} u_{0}^{2}+\xi_{0}\ln\xi_{0}-\xi_{0}+1+\frac{1}{11}\eta\xi_{0}^{-10}+\kappa|\nabla_{x}\sqrt{\xi_{0}}|^{2}+
\frac{\delta}{2}|\nabla_{x}\Delta_{x}^{2}\xi_{0}|^{2}dxdz,
\end{aligned}
\end{equation}
and
\begin{equation*}
\begin{aligned}
&\int_{\Omega}(\frac{1}{2}\xi(u+2\overline{\nu}_{1} \nabla_{x}\ln \xi)^{2}-2\overline{\nu}_{1}r_{0}\ln\xi) dxdzdt
+2\overline{\nu}_{1}\int_{0}^{T}\int_{\Omega}\xi|\partial_{z}w|^{2}dxdzdt\\
&+8\overline{\nu}_{1}\int_{0}^{T}\int_{\Omega}
|\nabla_{x}\sqrt{\xi}|^{2}dxdzdt +r\int_{0}^{T}\int_{\Omega}\xi|u|^{3}dxdzdt+\overline{\nu}_{2}\int_{0}^{T}\int_{\Omega}\xi|\partial_{z}u|^{2}dxdzdt\\
&+\frac{8}{5}\eta\overline{\nu}_{1}\int_{0}^{T}\int_{\Omega}|\nabla_{x}\xi^{-5}|^{2}dxdzdt
+\kappa\overline{\nu}_{1}\int_{0}^{T}
\int_{\Omega}\xi|\nabla_{x}^{2}\ln\xi|^{2}dxdzdt\\
\end{aligned}
\end{equation*}
\begin{equation}\label{c24}
\begin{aligned}
&+r_{0}\int_{0}^{T}\int_{\Omega}u^{2}dxdzdt
+2\delta\overline{\nu}_{1}\int_{0}^{T}\int_{\Omega}|\Delta_{x}^{3}\xi|^{2}dxdzdt+\overline{\nu}_{1}
\int_{0}^{T}\int_{\Omega}\xi|A_{x}(u)|^{2}dxdzdt\\
&\leq\int_{\Omega}(\frac{1}{2}\xi_{0}(u_{0}+2\overline{\nu}_{1} \nabla_{x}\ln \xi_{0})^{2}
-2\overline{\nu}_{1}r_{0}\ln\xi_{0}) dxdz+E_{0}+C,
\end{aligned}
\end{equation}

Thus, to conclude this part, we have got that the following proposition
\begin{Proposition}\label{pro4}
There exists the weak solutions to the system
\begin{equation}\label{c25}
\left\{\begin{array}{lll}
\partial_{t}\xi+{\rm div}_{x}(\xi u)+\partial_{z}(\xi w)=0,\\
\partial_{t}(\xi u)+{\rm div}_{x}(\xi u\otimes u)+\partial_{z}(\xi uw)+\nabla_{x}\xi+r_{0} u+r\xi |u|u=\\
2\overline{\nu}_{1}{\rm div}_{x}(\xi D_{x}(u))+\overline{\nu}_{2}\partial_{z}(\xi \partial_{z}u)+
\eta\nabla_{x}\xi^{-10}+\kappa\xi\nabla_{x}(\dfrac{\Delta_{x}\sqrt{\xi}}{\sqrt{\xi}})
+\delta\xi\nabla_{x}\Delta_{x}^{5}\xi,\\
\partial_{z}\xi=0.
\end{array}\right.
\end{equation}
with suitable initial data, for any $T>0$. In particular, the weak solutions $(\xi,u,w)$ satisfy energy inequality (\ref{c23}) and the B-D entropy (\ref{c24}).
\end{Proposition}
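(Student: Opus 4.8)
The plan is to assemble into a single statement the compactness and limit-passage work already prepared above. Keep $\delta,\eta,\kappa,r_{0}>0$ fixed and let $(\xi_{\mu,\varepsilon},u_{\mu,\varepsilon},w_{\mu,\varepsilon})$ be the weak solutions of (\ref{b40}) furnished by Proposition \ref{pro3}. The structural point that makes everything work is that, with these parameters fixed, the bound (\ref{c1}) keeps $\xi_{\mu,\varepsilon}$ bounded and bounded away from vacuum uniformly in $\mu,\varepsilon$, so that $\xi_{\mu,\varepsilon}^{-1}$, $\ln\xi_{\mu,\varepsilon}$, and all the quotients in the quantum, cold-pressure and capillary terms are uniformly controlled.

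First I would collect the uniform estimates. The energy inequality (\ref{b39}) controls $\sqrt{\xi_{\mu,\varepsilon}}u_{\mu,\varepsilon}$ in $L^{\infty}(0,T;L^{2})$, the dissipations $\sqrt{\xi_{\mu,\varepsilon}}D_{x}(u_{\mu,\varepsilon})$, $\sqrt{\xi_{\mu,\varepsilon}}\partial_{z}u_{\mu,\varepsilon}$, $\sqrt{\mu}\Delta u_{\mu,\varepsilon}$ in $L^{2}(0,T;L^{2})$, and $\xi_{\mu,\varepsilon}$ in $L^{\infty}(0,T;H^{5})\cap L^{2}(0,T;H^{6})$; the B--D entropy (\ref{c3}) adds $\nabla_{x}\sqrt{\xi_{\mu,\varepsilon}}\in L^{\infty}(0,T;L^{2})$, $\sqrt{\xi_{\mu,\varepsilon}}\partial_{z}w_{\mu,\varepsilon}$, $\sqrt{\xi_{\mu,\varepsilon}}A_{x}(u_{\mu,\varepsilon})$, $\nabla_{x}\xi_{\mu,\varepsilon}^{-5}\in L^{2}(0,T;L^{2})$, all uniformly in $\mu,\varepsilon$. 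Combining these with Lemma \ref{l4} (bounds on $\partial_{t}\xi_{\mu,\varepsilon}$, $\partial_{t}\sqrt{\xi_{\mu,\varepsilon}}$, $\partial_{t}(\xi_{\mu,\varepsilon}u_{\mu,\varepsilon})$ in negative Sobolev spaces), the Aubin--Lions Lemma \ref{le2} and Lemma \ref{le3} yields the convergences (\ref{c16}): $\sqrt{\xi_{\mu,\varepsilon}}\to\sqrt{\xi}$ strongly in $L^{2}([0,T];H^{1})$, $\xi_{\mu,\varepsilon}\to\xi$ strongly in $C([0,T];H^{5})$ and weakly in $L^{2}(0,T;H^{6})$, $\xi_{\mu,\varepsilon}u_{\mu,\varepsilon}\to\xi u$ strongly in $L^{2}(0,T;L^{p})$ for $p<3$, $\sqrt{\xi_{\mu,\varepsilon}}u_{\mu,\varepsilon}\to\sqrt{\xi}u$ strongly in $L^{2}(0,T;L^{2})$, and $\xi_{\mu,\varepsilon}^{-10}\to\xi^{-10}$ strongly in $L^{1}(0,T;L^{1})$, all also a.e. Arguing as in Lemma \ref{l9} gives the strong $L^{2}(0,T;L^{2})$ convergence of $\sqrt{\xi_{\mu,\varepsilon}}\,\overline{u}_{\mu,\varepsilon}$ and $\sqrt{\xi_{\mu,\varepsilon}}\,\widetilde{u}_{\mu,\varepsilon}$ as well. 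Finally the Poincar\'{e} inequality in $z$ turns the bound on $\sqrt{\xi_{\mu,\varepsilon}}\partial_{z}w_{\mu,\varepsilon}$ into a bound on $\sqrt{\xi_{\mu,\varepsilon}}w_{\mu,\varepsilon}$ in $L^{2}(0,T;L^{2})$; taking a weak limit $\sqrt{\xi_{\mu,\varepsilon}}w_{\mu,\varepsilon}\rightharpoonup l$ and using $\sqrt{\xi_{\mu,\varepsilon}}\to\sqrt{\xi}$ strongly, define $w$ by (\ref{c17}), so $l=\sqrt{\xi}\,w$ and $w$ retains the representation $w=-{\rm div}_{x}(\xi\widetilde{u})/\xi+z\,{\rm div}_{x}(\xi\overline{u})/\xi$.

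Next I would pass to the limit in the weak formulation (\ref{b38}) and in the continuity equation term by term. The diffusion term ${\rm div}_{x}(\xi_{\mu,\varepsilon}D_{x}(u_{\mu,\varepsilon}))$, the quantum term $\kappa\xi_{\mu,\varepsilon}\nabla_{x}(\Delta_{x}\sqrt{\xi_{\mu,\varepsilon}}/\sqrt{\xi_{\mu,\varepsilon}})$, the capillary term $\delta\xi_{\mu,\varepsilon}\nabla_{x}\Delta_{x}^{5}\xi_{\mu,\varepsilon}$ and the cold-pressure term $\eta\nabla_{x}\xi_{\mu,\varepsilon}^{-10}$ are handled exactly as in the $n\to\infty$ step of Section~3: after integrating by parts to place derivatives on the test function, the strong $H^{1}$ convergence of $\sqrt{\xi_{\mu,\varepsilon}}$, the strong $C([0,T];H^{5})$ and weak $L^{2}(0,T;H^{6})$ convergence of $\xi_{\mu,\varepsilon}$, and the $L^{1}$ convergence of $\xi_{\mu,\varepsilon}^{-10}$ are enough. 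The regularizing terms vanish: $\varepsilon\Delta_{x}\xi_{\mu,\varepsilon}$ in the mass equation, and $\varepsilon\nabla_{x}\xi_{\mu,\varepsilon}\cdot\nabla_{x}u_{\mu,\varepsilon}$, $\mu\Delta^{2}u_{\mu,\varepsilon}$ in the momentum equation, are $O(\varepsilon)$ or $O(\sqrt{\mu})$ after one integration by parts and the uniform bounds, as in (\ref{c20})--(\ref{c21}). The delicate term is the convective vertical term $\partial_{z}(\xi_{\mu,\varepsilon}u_{\mu,\varepsilon}w_{\mu,\varepsilon})$: I would substitute the identity (\ref{b28}), rewriting it through $z$-derivatives of $\xi_{\mu,\varepsilon}\widetilde{u}_{\mu,\varepsilon}\otimes u_{\mu,\varepsilon}$ and $z\,\xi_{\mu,\varepsilon}\overline{u}_{\mu,\varepsilon}\otimes u_{\mu,\varepsilon}$ together with $\xi_{\mu,\varepsilon}\widetilde{u}_{\mu,\varepsilon}\cdot\nabla_{x}u_{\mu,\varepsilon}$ and $z\,\xi_{\mu,\varepsilon}\overline{u}_{\mu,\varepsilon}\cdot\nabla_{x}u_{\mu,\varepsilon}$, and combine the strong $L^{2}(L^{2})$ limits of $\sqrt{\xi_{\mu,\varepsilon}}u_{\mu,\varepsilon}$, $\sqrt{\xi_{\mu,\varepsilon}}\widetilde{u}_{\mu,\varepsilon}$, $\sqrt{\xi_{\mu,\varepsilon}}\overline{u}_{\mu,\varepsilon}$ with the weak limit $\sqrt{\xi_{\mu,\varepsilon}}\nabla_{x}u_{\mu,\varepsilon}\rightharpoonup\sqrt{\xi}\nabla_{x}u$ in $L^{2}(L^{2})$ to pass to the limit against $\partial_{z}\nabla_{x}\varphi$ and $\partial_{z}\varphi$, recovering $\partial_{z}(\xi u w)$ with the limiting $w$. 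This identifies $(\xi,u,w)$ as a distributional solution of (\ref{c25}), the regularized initial data being inherited.

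The energy inequality (\ref{c23}) and the B--D entropy (\ref{c24}) then follow from weak lower semicontinuity: each dissipative integrand on the left is a squared (or cubed) $L^{2}$- or $L^{3}$-norm of a weakly converging quantity, so its time integral cannot increase in the limit, while the right-hand sides are continuous in the strongly converging initial data; the combinations such as $\tfrac{1}{2}\xi(u+2\overline{\nu}_{1}\nabla_{x}\ln\xi)^{2}$ are handled by the Bresch--Desjardins convexity argument, and passing $\mu,\varepsilon\to0$ in (\ref{c3}) annihilates the remainder $\varepsilon C(\delta,\eta)+\sqrt{\mu}\,C(\delta,\eta)$ and discards the nonnegative term $\overline{\nu}_{1}r_{0}\varepsilon\int|\nabla_{x}\xi|^{2}/\xi^{2}$ on the left. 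I expect the main obstacle to be the convective vertical term $\partial_{z}(\xi u w)$: since $w$ obeys no evolution equation of its own and is only determined by the implicit formula (\ref{a11}), its lack of independent regularity must be bypassed, and the rewriting (\ref{b28}) --- which trades the product $uw$ for products of $u$ (and of its vertical mean and antiderivative) with $\nabla_{x}u$ and with $\nabla_{x}$ of the test function, all uniformly bounded --- is precisely what makes the limit possible.
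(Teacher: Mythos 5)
Your proposal is correct and follows essentially the same route as the paper: uniform bounds from (\ref{b39}) and (\ref{c3}), Aubin--Lions compactness as in Lemma \ref{l4} leading to (\ref{c16})--(\ref{c17}), term-by-term limit passage with the regularizing terms vanishing as in (\ref{c20})--(\ref{c21}), and lower semicontinuity for (\ref{c23})--(\ref{c24}). Your treatment of $\partial_{z}(\xi uw)$ via the decomposition (\ref{b28}) is in fact more explicit than the paper's one-line assertion (\ref{c18}), but it is exactly the mechanism the paper itself uses at the $n\to\infty$ stage in (\ref{b34}), so the approaches coincide.
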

\section{Passing to the limits as $\eta\rightarrow0$.}
In this section, we pass to the limits as $\eta\rightarrow0$ with $\delta,\kappa,r_{0}$ being fixed. We denote by $(\xi_{\eta},u_{\eta},w_{\eta})$ the weak solutions at this level. From Proposition \ref{pro4}, we have the following regularities
\begin{equation}\label{d1}
\begin{aligned}
&\sqrt{\xi_{\eta}}u_{\eta}\in L^{\infty}(0,T;L^{2}),\ \sqrt{\xi_{\eta}}D_{x}(u_{\eta})\in L^{2}(0,T;L^{2}),
\sqrt{\xi_{\eta}}\partial_{z}u_{\eta}\in L^{2}(0,T;L^{2})\\
&\nabla_{x}\sqrt{\xi_{\eta}}\in L^{\infty}(0,T;L^{2}),\ \sqrt{\delta}\xi_{\eta}\in L^{\infty}(0,T;H^{5}),\ \sqrt{\delta}\xi_{\eta}\in L^{2}(0,T;H^{6}),\\
&u_{\xi_{\eta}}\in L^{2}(0,T;L^{2}),\ \xi_{\eta}^{\frac{1}{3}}u_{\eta}\in L^{3}(0,T;L^{3}),\ \sqrt{\xi_{\eta}}\nabla u_{\eta}\in L^{2}(0,T;L^{2}),\\
&\xi_{\eta}\ln\xi_{\eta}-\xi_{\eta}+1\in L^{\infty}(0,T;L^{1}),
-r_{0}\ln\xi_{\eta}\in L^{\infty}(0,T;L^{1}),\\
&\eta\xi_{\eta}^{-10}\in L^{\infty}(0,T;L^{1}),\ \sqrt{\eta}\nabla_{x} \xi_{\eta}^{-5}\in L^{2}(0,T;L^{2}),\sqrt{\xi_{\eta}}\partial_{z}w_{\eta}\in L^{2}(0,T;L^{2}),\\
&\sqrt{\kappa}\sqrt{\xi_{\eta}}\in L^{2}(0,T;H^{2}),
 \sqrt[4]{\kappa}\nabla_{x}\xi_{\eta}^{\frac{1}{4}}\in L^{4}(0,T;L^{4}).
\end{aligned}
\end{equation}

So it is easy to check that we have the similar estimates as in Lemma \ref{l4} uniform with $\eta$. Furthermore, we deduce the similar compactness for $(\xi_{\eta},u_{\eta},w_{\eta})$ as follows
\begin{equation}\label{d2}
\begin{aligned}
&\sqrt{\xi_{\eta}}\rightarrow \sqrt{\xi},\ strongly\ in\  L^{2}([0,T];H^{1}),\\
&\xi_{\eta}\rightarrow \xi \ strongly\ in\  C([0,T];H^{5}),\ \xi_{\eta}\rightharpoonup \xi,\ in\ L^{2}(0,T;H^{6}),\ \\
&u_{\eta}\rightharpoonup u\ in\ L^{2}(0,T;L^{2}),\ \xi_{\eta}w_{\eta}\rightharpoonup \xi w\ in\  L^{2}(0,T;L^{2}),\\
&\xi_{\eta}u_{\eta}\rightarrow \xi u\ strongly\ in\ L^{2}(0,T;L^{p}),\ for\ \forall 1\leq p<3,\\
&\sqrt{\xi_{\eta}}u_{\eta}\rightarrow\sqrt{\xi}u\ \ strongly\ in\ L^{2}(0,T;L^{2}),\ \sqrt{\xi_{\eta}}\rightharpoonup \sqrt{\xi},\ in\ L^{2}(0,T;H^{2}).
\end{aligned}
\end{equation}
Therefore, at this level of approximation, we only focus on the convergence of the cold pressure $\eta\nabla\xi_{\eta}^{-10}$. Here we state the following lemma.
 \begin{Lemma}\label{l5}
 For $\xi_{\eta}$ defined as in Proposition \ref{pro4}, we have
 $$\eta\int_{0}^{T}\int_{\Omega} \xi_{\eta}^{-10}dxdzdt\rightarrow0$$
 as $\eta\rightarrow0$.
 \end{Lemma}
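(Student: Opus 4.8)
The plan is to combine a uniform‑in‑$\eta$ higher‑integrability estimate for $\eta\xi_{\eta}^{-10}$ with the fact that the limiting density $\xi$ cannot vanish on a set of positive measure, and then to split the space–time domain $Q:=(0,T)\times\Omega$ accordingly.

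First I would collect the uniform bounds. By the energy inequality $(\ref{c23})$ the quantity $\eta\xi_{\eta}^{-10}$ is bounded in $L^{\infty}(0,T;L^{1}(\Omega))$ independently of $\eta$, hence $\sqrt{\eta}\,\xi_{\eta}^{-5}\in L^{\infty}(0,T;L^{2}(\Omega))$; by the B--D entropy $(\ref{c24})$ we also have $\sqrt{\eta}\,\nabla_{x}\xi_{\eta}^{-5}\in L^{2}(0,T;L^{2}(\Omega))$, uniformly in $\eta$. Since $\partial_{z}\xi_{\eta}=0$, the function $\xi_{\eta}^{-5}$ is independent of $z$, so $\sqrt{\eta}\,\xi_{\eta}^{-5}$ is bounded, uniformly in $\eta$, in $L^{\infty}(0,T;L^{2}(\mathbb{T}^{2}))\cap L^{2}(0,T;H^{1}(\mathbb{T}^{2}))$. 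The two‑dimensional Gagliardo--Nirenberg inequality (Lemma $\ref{le1}$), in the form $\|f\|_{L^{4}(\mathbb{T}^{2})}^{2}\le C\|f\|_{L^{2}(\mathbb{T}^{2})}\|f\|_{H^{1}(\mathbb{T}^{2})}$, then yields $\sqrt{\eta}\,\xi_{\eta}^{-5}\in L^{4}(Q)$ with a bound independent of $\eta$, which is equivalent to $\|\eta\,\xi_{\eta}^{-10}\|_{L^{2}(Q)}\le C$ uniformly in $\eta$.

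Next I would prove that $\xi>0$ a.e.\ in $Q$. By $(\ref{d1})$ the term $-r_{0}\ln\xi_{\eta}$ is bounded in $L^{\infty}(0,T;L^{1}(\Omega))$ uniformly in $\eta$ (this is where keeping $r_{0}$ fixed at this level is used), while $\sqrt{\delta}\,\xi_{\eta}\in L^{\infty}(0,T;H^{5}(\Omega))$ together with the embedding $H^{5}(\Omega)\hookrightarrow L^{\infty}(\Omega)$ gives a uniform upper bound $0<\xi_{\eta}\le C(\delta)$. Since $\xi_{\eta}\to\xi$ in $C([0,T];H^{5}(\Omega))$ by $(\ref{d2})$ — in particular uniformly on $[0,T]\times\overline{\Omega}$ and almost everywhere — Fatou's lemma applied to the nonnegative functions $\ln C(\delta)-\ln\xi_{\eta}$ gives $-\ln\xi\in L^{\infty}(0,T;L^{1}(\Omega))$, whence $\xi>0$ almost everywhere.

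Finally I would carry out the splitting. For $\epsilon>0$ put $A_{\epsilon}:=\{(t,x,z)\in Q:\ \xi(t,x)\le\epsilon\}$; since $\xi>0$ a.e.\ and $|Q|<\infty$, we have $|A_{\epsilon}|\to 0$ as $\epsilon\to 0^{+}$. On $Q\setminus A_{\epsilon}$ one has $\xi>\epsilon$, and by the uniform convergence $\xi_{\eta}\to\xi$ it follows that $\xi_{\eta}>\epsilon/2$ there once $\eta$ is small, so $\eta\int_{Q\setminus A_{\epsilon}}\xi_{\eta}^{-10}\,dxdzdt\le\eta(\epsilon/2)^{-10}|Q|\to 0$ as $\eta\to 0$. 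On $A_{\epsilon}$, H\"{o}lder's inequality and the uniform bound from the first step give $\eta\int_{A_{\epsilon}}\xi_{\eta}^{-10}\,dxdzdt\le\|\eta\xi_{\eta}^{-10}\|_{L^{2}(Q)}\,|A_{\epsilon}|^{1/2}\le C\,|A_{\epsilon}|^{1/2}$. Hence $\limsup_{\eta\to 0}\eta\int_{0}^{T}\int_{\Omega}\xi_{\eta}^{-10}\,dxdzdt\le C\,|A_{\epsilon}|^{1/2}$ for every $\epsilon>0$, and letting $\epsilon\to 0$ concludes the proof. The main obstacle is exactly this passage: the uniform $L^{1}$ bound on $\eta\xi_{\eta}^{-10}$ alone only gives boundedness, not vanishing, of the integral, so one must genuinely upgrade the integrability of $\eta\xi_{\eta}^{-10}$ to some $L^{p}(Q)$ with $p>1$ — which is possible here thanks to $\partial_{z}\xi=0$ and the B--D entropy — and couple it with the absence of vacuum in the limit.
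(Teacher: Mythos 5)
Your proof is correct and follows essentially the same strategy as the paper's: a uniform-in-$\eta$ bound on $\eta\xi_{\eta}^{-10}$ in some $L^{p}(Q)$ with $p>1$ extracted from the B--D entropy, combined with the absence of vacuum in the limit density obtained from the $-r_{0}\ln\xi_{\eta}$ control. The only differences are in implementation: the paper gets $L^{5/3}(Q)$ via the 3D Sobolev embedding plus interpolation and concludes with a.e.\ convergence and Lemma \ref{le3}, whereas you get the slightly stronger $L^{2}(Q)$ bound by exploiting $\partial_{z}\xi=0$ together with the two-dimensional Ladyzhenskaya inequality and conclude by an explicit splitting of the space--time domain.
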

 \begin{proof}
 The proof is motivated by the method in \cite{yucheng2016}. From the B-D entropy (\ref{c24}), we have
 \begin{equation}\label{d3}
 \sup_{t\in[0,T]}\int_{\Omega} (\log(\frac{1}{\xi_{\eta}}))_{+}dxdz\leq C(r_{0})<+\infty.
 \end{equation}
 Note that
 $$y\in \mathbb{R}^{+}\rightarrow\log(\frac{1}{y})_{+}$$
 is a convex continuous function. Moreover, in combination with the property of the convex function and Fatou's Lemma, it yields that
 \begin{equation}\label{d4}
 \begin{aligned}
 \int_{\Omega}(\log(\frac{1}{\xi}))_{+}dxdz&\leq \int_{\Omega} \lim_{\eta\rightarrow0} \inf(\log(\frac{1}{\xi_{\eta}}))_{+}dxdz\\
 &\leq \lim_{\eta\rightarrow0}\inf \int_{\Omega} (\log(\frac{1}{\xi_{\eta}}))_{+}dxdz,
 \end{aligned}
 \end{equation}
 which implies $(\log(\frac{1}{\xi}))_{+}$ is bounded in $L^{\infty}(0,T;L^{1})$. So it allows us to deduce that
 \begin{equation}\label{d5}
 |\{x\ |\ \xi(t,x)=0\}|=0,\ \ for\ almost\ every\ t\in [0,T],
 \end{equation}
 where $|A|$ denotes the measure of set $A$.

Due to $\xi_{\eta}\rightarrow\xi$ strongly in $C([0,T];H^{5})$, hence $\xi_{\eta}\rightarrow\xi$ a.e.. Then the above limit and (\ref{d5}) deduce
 \begin{equation}\label{d6}
 \eta\xi_{\eta}^{-10}\rightarrow0\ \ a.e.
  \ \ as\  \eta \rightarrow 0.
 \end{equation}

 Moreover,using the interpolation inequality, we have
 $$\|\eta\xi_{\eta}^{-10}\|_{L^{\frac{5}{3}}(0,T;L^{\frac{5}{3}})}\leq \|\eta\xi_{\eta}^{-10}\|_{L^{\infty}(0,T;L^{1})}^{\frac{2}{5}}
 \|\eta\xi_{\eta}^{-10}\|_{L^{1}(0,T;L^{3})}^{\frac{3}{5}}\leq C,$$
 which combine with (\ref{d6}) and Lemma \ref{le3}, we have
 $$\eta\xi_{\eta}^{-10}\rightarrow0,\ strongly\ in\ L^{1}(0,T;L^{1}).$$
 \end{proof}

 Thus, by the compactness results (\ref{d2}), we can pass to the limit as $\eta\rightarrow0$ in (\ref{c25})
 \begin{equation}\label{d7}
\left\{\begin{array}{lll}
\partial_{t}\xi+{\rm div}_{x}(\xi u)+\partial_{z}(\xi w)=0,\\
\partial_{t}(\xi u)+{\rm div}_{x}(\xi u\otimes u)+\partial_{z}(\xi uw)+\nabla_{x}\xi+r_{0} u+r\xi |u|u\\
=2\overline{\nu}_{1}{\rm div}_{x}(\xi D_{x}(u))+\overline{\nu}_{2}\partial_{z}(\xi \partial_{z}u)+
\kappa\xi\nabla_{x}(\dfrac{\Delta_{x}\sqrt{\xi}}{\sqrt{\xi}})
+\delta\xi\nabla_{x}\Delta_{x}^{5}\xi,\\
\partial_{z}\xi=0
\end{array}\right.
\end{equation}
holds in the sense of distribution on $(0; T)\times\Omega$.

 Due to the lower semi-continuity of convex functions, we can obtain the following energy inequality and B-D entropy by passing to the limits in (\ref{c23}) and (\ref{c24}) as $\eta\rightarrow0$.
 \begin{equation}\label{d8}
\begin{aligned}
&\int_{\Omega}\frac{1}{2}\xi u^{2}+\xi\ln\xi-\xi+1+\kappa|\nabla_{x}\sqrt{\xi}|^{2}+
\frac{\delta}{2}|\nabla_{x}\Delta_{x}^{2}\xi|^{2}dxdz\\
&+r_{0}\int_{0}^{T}\int_{\Omega}u^{2}dxdzdt+r\int_{0}^{T}\int_{\Omega}\xi|u|^{3}dxdzdt\\
&+\int_{0}^{T}\int_{\Omega} 2\overline{\nu}_{1}\xi|D_{x}(u)|^{2}dxdzdt
+\int_{0}^{T}\int_{\Omega}\overline{\nu}_{2}\xi|\partial_{z}u|^{2}dxdzdt\\
&\leq \int_{\Omega}\frac{1}{2}\xi_{0} u_{0}^{2}+\xi_{0}\ln\xi_{0}-\xi_{0}+1+\kappa|\nabla_{x}\sqrt{\xi_{0}}|^{2}+
\frac{\delta}{2}|\nabla_{x}\Delta_{x}^{2}\xi_{0}|^{2}dxdz,
\end{aligned}
\end{equation}
and
\begin{equation}\label{d9}
\begin{aligned}
&\int_{\Omega}(\frac{1}{2}\xi(u+2\overline{\nu}_{1} \nabla_{x}\ln \xi)^{2}-2\overline{\nu}_{1}r_{0}\ln\xi) dxdzdt
+2\overline{\nu}_{1}\int_{0}^{T}\int_{\Omega}\xi|\partial_{z}w|^{2}dxdzdt\\
&+8\overline{\nu}_{1}\int_{0}^{T}\int_{\Omega}
|\nabla_{x}\sqrt{\xi}|^{2}dxdzdt +r\int_{0}^{T}\int_{\Omega}\xi|u|^{3}dxdzdt\\
&+\overline{\nu}_{2}\int_{0}^{T}\int_{\Omega}\xi|\partial_{z}u|^{2}dxdzdt
+\kappa\overline{\nu}_{1}\int_{0}^{T}
\int_{\Omega}\xi|\nabla_{x}^{2}\ln\xi|^{2}dxdzdt\\
&+r_{0}\int_{0}^{T}\int_{\Omega}u^{2}dxdzdt
+2\delta\overline{\nu}_{1}\int_{0}^{T}\int_{\Omega}|\Delta_{x}^{3}\xi|^{2}dxdzdt\\
&+\overline{\nu}_{1}\int_{0}^{T}\int_{\Omega}\xi|A_{x}(u)|^{2}dxdzdt\\
&\leq\int_{\Omega}(\frac{1}{2}\xi_{0}(u_{0}+2\overline{\nu}_{1} \nabla_{x}\ln \xi_{0})^{2}
-2\overline{\nu}_{1}r_{0}\ln\xi_{0}) dxdz+E_{0}+C.
\end{aligned}
\end{equation}
Thus we have the following proposition on the existence of the weak solutions at this level of approximation.
\begin{Proposition}\label{pro5}
For any $T>0$, there exist weak solutions to the system (\ref{d7}) with suitable initial data. In particular, the weak solutions $(\xi,u,w)$ satisfy the energy inequality (\ref{d8}) and the B-D entropy (\ref{d9}).
\end{Proposition}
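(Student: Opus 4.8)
The plan is to obtain $(\xi,u,w)$ as the limit of the sequence $(\xi_\eta,u_\eta,w_\eta)$ of weak solutions to \eqref{c25} furnished by Proposition \ref{pro4}, with $\delta,\kappa,r_0$ held fixed. First I would record the $\eta$-uniform bounds \eqref{d1}, which follow from the energy inequality \eqref{c23} and the B-D entropy \eqref{c24} exactly as at the previous level; these give in particular that $\sqrt{\delta}\,\xi_\eta$ is bounded in $L^\infty(0,T;H^5)\cap L^2(0,T;H^6)$, $\nabla_x\sqrt{\xi_\eta}\in L^\infty(0,T;L^2)$, the quantities $\sqrt{\xi_\eta}u_\eta$, $\sqrt{\xi_\eta}\,\partial_z u_\eta$, $\sqrt{\xi_\eta}\,\partial_z w_\eta$, $\sqrt{\xi_\eta}A_x(u_\eta)$, $\sqrt{\kappa}\sqrt{\xi_\eta}\in L^2(0,T;H^2)$ are bounded in $L^2(0,T;L^2)$, and $-r_0\ln\xi_\eta\in L^\infty(0,T;L^1)$. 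Arguing as in Lemma \ref{l4} (using the mass equation to bound $\partial_t\xi_\eta$ and $\partial_t(\xi_\eta u_\eta)$ in negative Sobolev spaces) and invoking Lemma \ref{le2}, I would extract a subsequence along which \eqref{d2} holds, namely $\xi_\eta\to\xi$ in $C([0,T];H^5)$, $\sqrt{\xi_\eta}\to\sqrt{\xi}$ in $L^2(0,T;H^1)$, $\xi_\eta u_\eta\to\xi u$ in $L^2(0,T;L^p)$ for $1\le p<3$, $\sqrt{\xi_\eta}u_\eta\to\sqrt{\xi}u$ in $L^2(0,T;L^2)$, together with the corresponding a.e.\ convergences; the limit $w$ is recovered from a weak limit $\sqrt{\xi_\eta}w_\eta\rightharpoonup l$ in $L^2(0,T;L^2)$ by setting $w=l/\sqrt{\xi}$ where $\xi>0$ and $w=0$ otherwise, precisely as in \eqref{c17}.

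Next I would pass to the limit term by term in the weak formulation of \eqref{c25}. The transport and diffusion terms $\xi_\eta u_\eta\otimes u_\eta$, $\xi_\eta D_x(u_\eta)$, $\xi_\eta\partial_z u_\eta$, $\nabla_x\xi_\eta$, $r\xi_\eta|u_\eta|u_\eta$, $r_0 u_\eta$, as well as the vertical term $\partial_z(\xi_\eta u_\eta w_\eta)$ rewritten via \eqref{b28} in terms of $\xi_\eta\widetilde u_\eta\otimes u_\eta$, $\xi_\eta\overline u_\eta\otimes u_\eta$ and $\sqrt{\xi_\eta}\nabla_x u_\eta$, are treated exactly as in Section 3, using the strong convergence of $\sqrt{\xi_\eta}$ in $L^2(0,T;H^1)$, of $\xi_\eta u_\eta$ and of $\sqrt{\xi_\eta}u_\eta$ in $L^2(0,T;L^2)$, together with the weak convergence $\sqrt{\xi_\eta}\nabla_x u_\eta\rightharpoonup\sqrt{\xi}\nabla_x u$ in $L^2(0,T;L^2)$. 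The capillary term is rewritten as $-2\kappa\Delta_x\sqrt{\xi_\eta}\nabla_x\sqrt{\xi_\eta}-\kappa\Delta_x\sqrt{\xi_\eta}\sqrt{\xi_\eta}\,{\rm div}_x\varphi$ and passes using $\sqrt{\xi_\eta}\rightharpoonup\sqrt{\xi}$ in $L^2(0,T;H^2)$ and $\sqrt{\xi_\eta}\to\sqrt{\xi}$ in $L^2(0,T;H^1)$; the term $\delta\xi_\eta\nabla_x\Delta_x^5\xi_\eta$ passes using $\xi_\eta\to\xi$ in $C([0,T];H^5)$ and $\xi_\eta\rightharpoonup\xi$ in $L^2(0,T;H^6)$, exactly as for \eqref{b37}. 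The single genuinely new point is the vanishing of the cold pressure, which is the content of Lemma \ref{l5}: its proof uses the $r_0$-part of the B-D entropy to bound $(\log(1/\xi_\eta))_+$ in $L^\infty(0,T;L^1)$, deduces $|\{x:\xi(t,x)=0\}|=0$ for a.e.\ $t$ by Fatou and convexity, hence $\eta\xi_\eta^{-10}\to0$ a.e., and then upgrades to strong $L^1(0,T;L^1)$ convergence via the uniform $L^{5/3}(0,T;L^{5/3})$ bound and Lemma \ref{le3}; consequently $\eta\nabla_x\xi_\eta^{-10}\to0$ in $\mathcal{D}'$, and the limiting system is \eqref{d7}.

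Finally, I would pass to the limit in \eqref{c23} and \eqref{c24}. Every dissipative term on the left is of the form $\int|F(\xi_\eta,u_\eta)|^2$ with $F(\xi_\eta,u_\eta)$ converging weakly (or weak-$\ast$) to $F(\xi,u)$ — this applies to $\sqrt{\xi_\eta}D_x(u_\eta)$, $\sqrt{\xi_\eta}\partial_z u_\eta$, $\sqrt{\xi_\eta}\partial_z w_\eta$, $\sqrt{\xi_\eta}A_x(u_\eta)$, $\nabla_x\sqrt{\xi_\eta}$, $\Delta_x^3\xi_\eta$, the quantity $\xi_\eta|\nabla_x^2\ln\xi_\eta|^2$, and $u_\eta$ — so each is bounded below by its limit by weak lower semicontinuity; the nonnegative cold-pressure terms $\tfrac{8}{5}\eta\overline{\nu}_1\int|\nabla_x\xi_\eta^{-5}|^2$ and $\tfrac1{11}\eta\int\xi_\eta^{-10}$ are simply dropped, while the right-hand sides converge because the initial-data expressions are $\eta$-independent up to the harmless term $\tfrac1{11}\eta\xi_0^{-10}\to0$. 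This yields \eqref{d8} and \eqref{d9} and hence Proposition \ref{pro5}. The step I expect to be the main obstacle is Lemma \ref{l5}, that is, guaranteeing that $\xi$ stays positive in the limit so that the cold-pressure regularization can be removed; this is exactly where the drag term $r_0u$ and its imprint $-2\overline{\nu}_1 r_0\ln\xi$ in the B-D entropy are indispensable, and it must be carried out here, before the subsequent limit $\kappa=\delta\to0$ in which no such bound on $\log(1/\xi)$ survives.
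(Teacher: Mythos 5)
Your proposal is correct and follows essentially the same route as the paper: $\eta$-uniform bounds from \eqref{c23}--\eqref{c24}, the compactness \eqref{d2}, the key Lemma \ref{l5} proved exactly as in the text (the $-2\overline{\nu}_1 r_0\ln\xi_\eta$ term in the B-D entropy bounding $(\log(1/\xi_\eta))_+$ in $L^\infty(0,T;L^1)$, Fatou plus convexity to kill the vacuum set, then a.e.\ convergence upgraded via the $L^{5/3}$ interpolation bound and Lemma \ref{le3}), and finally lower semicontinuity in the energy and entropy inequalities. Your closing remark that this step must precede the $\kappa=\delta=r_0\to0$ limit, since the bound on $\log(1/\xi)$ does not survive it, is precisely the point of the ordering in the paper.
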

\section{Passing to the limits as $\kappa,\delta,r_{0}\rightarrow0$.}
At this level, the weak solutions satisfy the energy inequality (\ref{d8}) and the B-D entropy (\ref{d9}). Thus we have the following regularities:
\begin{equation*}
\begin{aligned}
&\sqrt{\xi_{\delta,\kappa,r_{0}}}u_{\delta,\kappa,r_{0}}\in L^{\infty}(0,T;L^{2}),\ \sqrt{\xi_{\delta,\kappa,r_{0}}}D_{x}(u_{\delta,\kappa,r_{0}})\in L^{2}(0,T;L^{2}),\\
&\nabla_{x}\sqrt{\xi_{\delta,\kappa,r_{0}}}\in L^{\infty}(0,T;L^{2}),
\sqrt{\xi_{\delta,\kappa,r_{0}}}\partial_{z}u_{\delta,\kappa,r_{0}}\in L^{2}(0,T;L^{2}),\\ &\sqrt{\delta}\xi_{\delta,\kappa,r_{0}}\in L^{\infty}(0,T;H^{5})\cap L^{2}(0,T;H^{6}),
u_{\delta,\kappa,r_{0}}\in L^{2}(0,T;L^{2}),\\
&\xi_{\delta,\kappa,r_{0}}^{\frac{1}{3}}u_{\delta,\kappa,r_{0}}\in L^{3}(0,T;L^{3}),\ \sqrt{\xi_{\delta,\kappa,r_{0}}}\nabla u_{\delta,\kappa,r_{0}}\in L^{2}(0,T;L^{2}),\\
\end{aligned}
\end{equation*}
\begin{equation}\label{e1}
\begin{aligned}
&\xi_{\delta,\kappa,r_{0}}\ln\xi_{\delta,\kappa,r_{0}}-\xi_{\delta,\kappa,r_{0}}+1\in L^{\infty}(0,T;L^{1}),
-r_{0}\ln\xi_{\delta,\kappa,r_{0}}\in L^{\infty}(0,T;L^{1}),\\
&\sqrt{\xi_{\delta,\kappa,r_{0}}}\partial_{z}w_{\delta,\kappa,r_{0}}\in L^{2}(0,T;L^{2}),\sqrt{\kappa}\sqrt{\xi_{\delta,\kappa,r_{0}}}\in L^{2}(0,T;H^{2}),\\
& \sqrt[4]{\kappa}\nabla_{x}\xi_{\delta,\kappa,r_{0}}^{\frac{1}{4}}\in L^{4}(0,T;L^{4}),
\end{aligned}
\end{equation}

Next, we will proceed the compactness arguments in several steps
\subsection{ Convergence of $\sqrt{\xi_{\delta,\kappa,r_{0}}}$.}
\begin{Lemma}\label{l6}
For $\xi_{\delta,\kappa,r_{0}}$ satisfy  Proposition \ref{pro5}, we have
$$\sqrt{\xi_{\delta,\kappa,r_{0}}} is\ bounded\  in\  L^{\infty}(0,T;H^{1}),$$
$$\partial_{t}\sqrt{\xi_{\delta,\kappa,r_{0}}} is\ bounded\  in\  L^{2}(0,T;H^{-1}).$$
Then, up to a subsequence,we get $\sqrt{\xi_{\delta,\kappa,r_{0}}}$ convergence almost everywhere and convergence strongly  in $L^{2}(0,T;L^{2})$, which means
$$\sqrt{\xi_{\delta,\kappa,r_{0}}}\rightarrow \sqrt{\xi},\ \ a.e.\ and\ strongly\ in\ L^{2}(0,T;L^{2}).$$
Moreover, we have
$$\xi_{\delta,\kappa,r_{0}}\rightarrow \xi\ \ a.e.\ and\ strongly\ in\ C([0,T];L^{p}),\ \ for \ any\ p\in [1,3).$$

\end{Lemma}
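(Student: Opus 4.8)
The plan is to deduce the stated compactness from the uniform (in $\delta,\kappa,r_{0}$) bounds coming from the energy inequality (\ref{d8}) and the B--D entropy (\ref{d9}), combined with the Aubin--Lions lemma (Lemma \ref{le2}). For the $L^{\infty}(0,T;H^{1})$ bound on $\sqrt{\xi_{\delta,\kappa,r_{0}}}$: the bound $\nabla_{x}\sqrt{\xi_{\delta,\kappa,r_{0}}}\in L^{\infty}(0,T;L^{2})$ is already recorded in (\ref{e1}), and $\partial_{z}\xi_{\delta,\kappa,r_{0}}=0$, so it remains only to bound $\sqrt{\xi_{\delta,\kappa,r_{0}}}$ in $L^{\infty}(0,T;L^{2})$, i.e.\ $\xi_{\delta,\kappa,r_{0}}$ in $L^{\infty}(0,T;L^{1})$; this follows from $\xi_{\delta,\kappa,r_{0}}\ln\xi_{\delta,\kappa,r_{0}}-\xi_{\delta,\kappa,r_{0}}+1\in L^{\infty}(0,T;L^{1})$ in (\ref{d8}) (equivalently, from conservation of mass, obtained by integrating $(\ref{d7})_{1}$ over $\Omega$ and using $w|_{z=0}=w|_{z=h}=0$). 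The Sobolev embedding $H^{1}\hookrightarrow L^{6}$ then also gives $\sqrt{\xi_{\delta,\kappa,r_{0}}}$ bounded in $L^{\infty}(0,T;L^{6})$, which will be used at the end.

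For the bound on $\partial_{t}\sqrt{\xi_{\delta,\kappa,r_{0}}}$ in $L^{2}(0,T;H^{-1})$, I would integrate $(\ref{d7})_{1}$ in $z$, using the boundary conditions, to obtain the reduced mass equation $\partial_{t}\xi_{\delta,\kappa,r_{0}}+{\rm div}_{x}(\xi_{\delta,\kappa,r_{0}}\overline{u}_{\delta,\kappa,r_{0}})=0$, whence
$$\partial_{t}\sqrt{\xi_{\delta,\kappa,r_{0}}}=-{\rm div}_{x}\big(\sqrt{\xi_{\delta,\kappa,r_{0}}}\,\overline{u}_{\delta,\kappa,r_{0}}\big)+\tfrac{1}{2}\sqrt{\xi_{\delta,\kappa,r_{0}}}\,{\rm div}_{x}\overline{u}_{\delta,\kappa,r_{0}}.$$
For the first term, $\sqrt{\xi}\,\overline{u}$ is bounded in $L^{\infty}(0,T;L^{2})$ since $\int_{\Omega}\xi|\overline{u}|^{2}\le\int_{\Omega}\xi|u|^{2}=\|\sqrt{\xi}u\|_{L^{2}}^{2}$ by Jensen's inequality applied to the vertical average, so ${\rm div}_{x}(\sqrt{\xi}\,\overline{u})\in L^{\infty}(0,T;H^{-1})$. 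For the second term, the same Jensen argument in $z$ (using $\partial_{z}\xi=0$) gives $\int_{\Omega}\xi|{\rm div}_{x}\overline{u}|^{2}\le\int_{\Omega}\xi|\nabla_{x}u|^{2}$, and $\sqrt{\xi}\nabla_{x}u=\sqrt{\xi}D_{x}(u)+\sqrt{\xi}A_{x}(u)\in L^{2}(0,T;L^{2})$ by (\ref{e1}), so this term lies in $L^{2}(0,T;L^{2})\hookrightarrow L^{2}(0,T;H^{-1})$; this gives the claimed bound, uniformly in $\delta,\kappa,r_{0}$.

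With these two bounds, Lemma \ref{le2}(II) (with $X_{0}=H^{1}$, $X=L^{2}$, $X_{1}=H^{-1}$, $p=\infty$, $q=2$) yields, along a subsequence, $\sqrt{\xi_{\delta,\kappa,r_{0}}}\to\sqrt{\xi}$ strongly in $C([0,T];L^{2})$, hence in $L^{2}(0,T;L^{2})$ and a.e.; in particular $\xi_{\delta,\kappa,r_{0}}\to\xi$ a.e. Finally, interpolating the $C([0,T];L^{2})$ convergence with the $L^{\infty}(0,T;L^{6})$ bound gives $\sqrt{\xi_{\delta,\kappa,r_{0}}}\to\sqrt{\xi}$ in $C([0,T];L^{2p})$ for every $p\in[1,3)$; writing $\xi_{\delta,\kappa,r_{0}}-\xi=(\sqrt{\xi_{\delta,\kappa,r_{0}}}-\sqrt{\xi})(\sqrt{\xi_{\delta,\kappa,r_{0}}}+\sqrt{\xi})$ and using H\"older then gives $\xi_{\delta,\kappa,r_{0}}\to\xi$ in $C([0,T];L^{p})$ for all $p\in[1,3)$. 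I expect the only mildly delicate point to be the uniform control of $\sqrt{\xi}\,{\rm div}_{x}\overline{u}$ in the formula for $\partial_{t}\sqrt{\xi}$, which is why the vertical-average Jensen inequalities and the decomposition $\nabla_{x}u=D_{x}(u)+A_{x}(u)$ are needed; everything else is routine bookkeeping.
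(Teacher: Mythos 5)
Your proof is correct, and the overall skeleton (uniform $H^{1}$ bound, time-derivative bound, Aubin--Lions via Lemma \ref{le2}, then upgrading to $C([0,T];L^{p})$) matches the paper's; the differences are in two steps, both of which are legitimate and, in one case, cleaner. First, for $\partial_{t}\sqrt{\xi_{\delta,\kappa,r_{0}}}$ the paper works with the full continuity equation and decomposes $\partial_{t}\sqrt{\xi}=\frac{1}{2}\sqrt{\xi}\,{\rm div}_{x}u-{\rm div}_{x}(\sqrt{\xi}u)-\frac{1}{2}\sqrt{\xi}\,\partial_{z}w$, controlling the last term by the B--D bound $\sqrt{\xi}\,\partial_{z}w\in L^{2}(0,T;L^{2})$ from (\ref{e1}); you instead integrate $(\ref{d7})_{1}$ in $z$ first and work with the reduced equation $\partial_{t}\xi+{\rm div}_{x}(\xi\overline{u})=0$, paying for it with the Jensen-in-$z$ estimates on $\overline{u}$ and ${\rm div}_{x}\overline{u}$. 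Your route is the more natural one here, since $\sqrt{\xi}$ is $z$-independent while the paper's right-hand side is formally $z$-dependent term by term (only the sum is not); both land in $L^{2}(0,T;H^{-1})$ uniformly. Second, for the convergence $\xi_{\delta,\kappa,r_{0}}\to\xi$ in $C([0,T];L^{p})$, $p\in[1,3)$, the paper applies Aubin--Lions a second time to $\xi$ itself, using $\xi\in L^{\infty}(0,T;W^{1,\frac{3}{2}})$ (from $\nabla_{x}\xi=2\sqrt{\xi}\nabla_{x}\sqrt{\xi}$) together with $\partial_{t}\xi\in L^{2}(0,T;W^{-1,\frac{3}{2}})$ and the compact embedding $W^{1,\frac{3}{2}}\hookrightarrow\hookrightarrow L^{p}$, $p<3$; you instead interpolate the $C([0,T];L^{2})$ convergence of $\sqrt{\xi_{\delta,\kappa,r_{0}}}$ against the uniform $L^{\infty}(0,T;L^{6})$ bound and then use the factorization $\xi_{\delta,\kappa,r_{0}}-\xi=(\sqrt{\xi_{\delta,\kappa,r_{0}}}-\sqrt{\xi})(\sqrt{\xi_{\delta,\kappa,r_{0}}}+\sqrt{\xi})$. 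Your argument is more elementary (no second compactness lemma, no distributional bound on $\partial_{t}\xi$ needed), at the cost of the minor caveat that the $L^{\infty}(0,T;L^{6})$ bound is only an essential-supremum bound, so the "uniform in $t$" convergence should be read accordingly --- the same caveat implicitly present in the paper's statement. No gaps otherwise.
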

\begin{proof}
Since$\|\sqrt{\xi_{\delta,\kappa,r_{0}}}\|_{L^{2}}^{2}=\|\xi_{0}\|_{L^{1}}$ and  $\nabla_{x}\sqrt{\xi_{\delta,\kappa,r_{0}}}$$\in L^{\infty}(0,T;L^{2})$, we have $\sqrt{\xi_{\delta,\kappa,r_{0}}} \in L^{\infty}(0,T;H^{1})$. Next
\begin{equation}\label{e2}
\begin{aligned}
\partial_{t}\sqrt{\xi_{\delta,\kappa,r_{0}}}&=-\frac{1}{2}\sqrt{\xi_{\delta,\kappa,r_{0}}}{\rm div}_{x}(u_{\delta,\kappa,r_{0}})-u_{\delta,\kappa,r_{0}}\cdot\nabla_{x}\sqrt{\xi_{\delta,\kappa,r_{0}}}-\frac{1}{2}
\sqrt{\xi_{\delta,\kappa,r_{0}}}\partial_{z}w_{\delta,\kappa,r_{0}}\\
&=\frac{1}{2}\sqrt{\xi_{\delta,\kappa,r_{0}}}{\rm div}_{x}(u_{\delta,\kappa,r_{0}})-{\rm div}_{x}(\sqrt{\xi_{\delta,\kappa,r_{0}}}u_{\delta,\kappa,r_{0}})-\frac{1}{2}
\sqrt{\xi_{\delta,\kappa,r_{0}}}\partial_{z}w_{\delta,\kappa,r_{0}},
\end{aligned}
\end{equation}
which yields $\partial_{t}\sqrt{\xi_{\delta,\kappa,r_{0}}}\in L^{2}(0,T;H^{-1})$. Using Lemma \ref{le2}, we have
$$\sqrt{\xi_{\delta,\kappa,r_{0}}}\rightarrow \sqrt{\xi},\ \  strongly\ in\ L^{2}(0,T;L^{2}),$$
and it yields that $\sqrt{\xi_{\delta,\kappa,r_{0}}}\rightarrow \sqrt{\xi}\ a.e.$.
By using the Sobolev embedding theorem, we have
$\sqrt{\xi_{\delta,\kappa,r_{0}}} $ is bounded in $ L^{\infty}(0,T;L^{6})$,\
then$$\xi_{\delta,\kappa,r_{0}}\in L^{\infty}(0,T;L^{3}).$$
And we deduce
\begin{equation}\label{e3}
\begin{aligned}
\xi_{\delta,\kappa,r_{0}}u_{\delta,\kappa,r_{0}}=\sqrt{\xi_{\delta,\kappa,r_{0}}}\sqrt{\xi_{\delta,\kappa,r_{0}}}
u_{\delta,\kappa,r_{0}}\in L^{\infty}(0,T;L^{\frac{3}{2}}),
\end{aligned}
\end{equation}
which yields ${\rm div}_{x}(\xi_{\delta,\kappa,r_{0}}u_{\delta,\kappa,r_{0}})\in L^{\infty}(0,T;W^{-1,\frac{3}{2}})$. This combines with
$\sqrt{\xi_{\delta,\kappa,r_{0}}}$ $\partial_{z}w_{\delta,\kappa,r_{0}}$ and the continuity equation yield $\partial_{t}\xi_{\delta,\kappa,r_{0}}\in L^{2}(0,T;W^{-1,\frac{3}{2}})$.

Furthermore, we have
\begin{equation}\label{e4}
\begin{aligned}
\nabla_{x}{\xi_{\delta,\kappa,r_{0}}}=2\sqrt{\xi_{\delta,\kappa,r_{0}}}\nabla_{x}\sqrt{\xi_{\delta,\kappa,r_{0}}}\in L^{\infty}(0,T;L^{\frac{3}{2}}).
\end{aligned}
\end{equation}
 So $\xi_{\delta,\kappa,r_{0}}$ is bounded in $L^{\infty}(0,T;W^{1,\frac{3}{2}})$.
 We use Lemma \ref{le2} to get
\begin{equation}\label{e5}
\xi_{\delta,\kappa,r_{0}}\rightarrow \xi,\ strongly\ in\ C([0,T];L^{p}),\ for\ any\ p\in[1,3).
\end{equation}
And hence, we have
$$\xi_{\delta,\kappa,r_{0}}\rightarrow \xi\ a.e..$$
Thus the proof of this lemma is completed.
\end{proof}
\subsection{ Convergence of the momentum.}
\begin{Lemma}\label{l8}
Up to a subsequence, the momentum $m_{\delta,\kappa,r_{0}}=\xi_{\delta,\kappa,r_{0}}u_{\delta,\kappa,r_{0}}$
 satisfy
 $$m_{\delta,\kappa,r_{0}}\rightarrow m\ \ strongly \ \ in\ \ L^{2}(0,T;L^{q})\ \ for\ all\ \ q\in [1,1.5).$$ In particular
 $$m_{\delta,\kappa,r_{0}}\rightarrow m\ a.e. \ for\ (x,t)\in \Omega\times(0,T).$$

 Remark: we can define $u(x,z,t)=m(x,z,t)/\xi(x,t)$ outside the vacuum set
 $\{x$ $|\xi(x,t)=0\}$. Then we obtain
 $$\xi_{\delta,\kappa,r_{0}}u_{\delta,\kappa,r_{0}}\rightarrow \xi u\ \ strongly \ \ in\ \ L^{2}(0,T;L^{q})\ \ for\ all\ \ q\in [1,1.5).$$
\end{Lemma}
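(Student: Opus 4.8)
The plan is to get strong compactness of $m_{\delta,\kappa,r_{0}}=\xi_{\delta,\kappa,r_{0}}u_{\delta,\kappa,r_{0}}$ from an Aubin--Lions argument (Lemma \ref{le2}), exactly as for the momentum compactness in Section 3; throughout, all constants are allowed to depend on the fixed parameters $\delta,\kappa,r_{0}$. First I would record the spatial bounds. By Lemma \ref{l6}, $\sqrt{\xi_{\delta,\kappa,r_{0}}}$ is bounded in $L^{\infty}(0,T;H^{1})\hookrightarrow L^{\infty}(0,T;L^{6})$, and $\sqrt{\xi_{\delta,\kappa,r_{0}}}u_{\delta,\kappa,r_{0}}$ is bounded in $L^{\infty}(0,T;L^{2})$; hence $m_{\delta,\kappa,r_{0}}=\sqrt{\xi_{\delta,\kappa,r_{0}}}\cdot(\sqrt{\xi_{\delta,\kappa,r_{0}}}u_{\delta,\kappa,r_{0}})$ is bounded in $L^{\infty}(0,T;L^{3/2})$ by H\"older. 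Writing $\nabla_{x}m_{\delta,\kappa,r_{0}}=2\,\nabla_{x}\sqrt{\xi_{\delta,\kappa,r_{0}}}\otimes(\sqrt{\xi_{\delta,\kappa,r_{0}}}u_{\delta,\kappa,r_{0}})+\sqrt{\xi_{\delta,\kappa,r_{0}}}\,(\sqrt{\xi_{\delta,\kappa,r_{0}}}\nabla_{x}u_{\delta,\kappa,r_{0}})$ and $\partial_{z}m_{\delta,\kappa,r_{0}}=\sqrt{\xi_{\delta,\kappa,r_{0}}}\,(\sqrt{\xi_{\delta,\kappa,r_{0}}}\partial_{z}u_{\delta,\kappa,r_{0}})$ (recall $\partial_{z}\xi=0$), the first term is bounded in $L^{\infty}(0,T;L^{1})$ and the remaining ones in $L^{2}(0,T;L^{3/2})$ thanks to (\ref{e1}); since $\Omega$ is bounded, this gives $m_{\delta,\kappa,r_{0}}$ bounded in $L^{2}(0,T;W^{1,1}(\Omega))$, and $W^{1,1}(\Omega)\hookrightarrow\hookrightarrow L^{q}(\Omega)$ for every $q\in[1,3/2)$.

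Second, I would estimate $\partial_{t}m_{\delta,\kappa,r_{0}}$ from the momentum equation of (\ref{d7}): it equals $-{\rm div}_{x}(\xi u\otimes u)-\partial_{z}(\xi uw)-\nabla_{x}\xi-r_{0}u-r\xi|u|u+2\overline{\nu}_{1}{\rm div}_{x}(\xi D_{x}(u))+\overline{\nu}_{2}\partial_{z}(\xi\partial_{z}u)+\kappa\xi\nabla_{x}(\Delta_{x}\sqrt{\xi}/\sqrt{\xi})+\delta\xi\nabla_{x}\Delta_{x}^{5}\xi$. Using $\xi u\otimes u=(\sqrt{\xi}u)\otimes(\sqrt{\xi}u)\in L^{\infty}(0,T;L^{1})$, $\nabla_{x}\xi=2\sqrt{\xi}\,\nabla_{x}\sqrt{\xi}\in L^{\infty}(0,T;L^{3/2})$, $u\in L^{2}(0,T;L^{2})$, $\xi|u|u$ with $\xi|u|^{2}=|\sqrt{\xi}u|^{2}\in L^{\infty}(0,T;L^{1})$, $\xi D_{x}(u)=\sqrt{\xi}\,(\sqrt{\xi}D_{x}(u))\in L^{2}(0,T;L^{3/2})$ and $\xi\partial_{z}u=\sqrt{\xi}\,(\sqrt{\xi}\partial_{z}u)\in L^{2}(0,T;L^{3/2})$, all of these are bounded in $L^{2}(0,T;H^{-s})$ for $s$ large. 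For $\partial_{z}(\xi uw)$ I would first unwind the representation of $w$ as in (\ref{b28}), rewriting it as $\partial_{z}\big(-{\rm div}_{x}(\xi\widetilde{u}\otimes u)+\xi\widetilde{u}\cdot\nabla_{x}u+z\,{\rm div}_{x}(\xi\overline{u}\otimes u)-z\,\xi\overline{u}\cdot\nabla_{x}u\big)$, and then bound each piece by combining $\sqrt{\xi}\widetilde{u},\sqrt{\xi}\overline{u}\in L^{\infty}(0,T;L^{2})$ (which follow as in Section 3 from $\sqrt{\xi}u\in L^{\infty}(0,T;L^{2})$) with $\sqrt{\xi}u\in L^{\infty}(0,T;L^{2})$ and $\sqrt{\xi}\nabla_{x}u\in L^{2}(0,T;L^{2})$, getting $\partial_{z}(\xi uw)$ bounded in $L^{2}(0,T;H^{-s})$. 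The quantum term, rewritten as $\tfrac{\kappa}{2}{\rm div}_{x}(\xi\nabla_{x}^{2}\ln\xi)$ with $\xi\nabla_{x}^{2}\ln\xi=2\sqrt{\xi}\,\nabla_{x}^{2}\sqrt{\xi}-2\nabla_{x}\sqrt{\xi}\otimes\nabla_{x}\sqrt{\xi}$ and $\sqrt{\kappa}\sqrt{\xi}\in L^{2}(0,T;H^{2})$, and the high-order term $\delta\xi\nabla_{x}\Delta_{x}^{5}\xi$ (which, tested against $\varphi$, reduces as in (\ref{b37}) to integrals of $\Delta_{x}^{3}\xi\in L^{2}(0,T;L^{2})$ against $\Delta_{x}^{2}(\xi\,{\rm div}_{x}\varphi+\varphi\cdot\nabla_{x}\xi)$, using $\xi\in L^{\infty}(0,T;H^{5})\cap L^{2}(0,T;H^{6})$), are likewise bounded in $L^{2}(0,T;H^{-s})$. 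Hence $\partial_{t}m_{\delta,\kappa,r_{0}}$ is bounded in $L^{2}(0,T;H^{-s})$ for some large $s$.

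Then Lemma \ref{le2}(I), applied with $X_{0}=W^{1,1}(\Omega)$, $X=L^{q}(\Omega)$ with $q\in[1,3/2)$ and $X_{1}=H^{-s}(\Omega)$ — noting $W^{1,1}\hookrightarrow\hookrightarrow L^{q}\hookrightarrow H^{-s}$ for $s$ large, since $\Omega\subset\mathbb{R}^{3}$ is a bounded Lipschitz domain — gives $m_{\delta,\kappa,r_{0}}\to m$ strongly in $L^{2}(0,T;L^{q})$ for all $q\in[1,1.5)$, and along a subsequence a.e.\ in $\Omega\times(0,T)$. Finally, since $r_{0}>0$ is fixed, $-r_{0}\ln\xi_{\delta,\kappa,r_{0}}$ is bounded in $L^{\infty}(0,T;L^{1})$ by the B-D entropy (\ref{d9}), so by convexity and Fatou $(\log(1/\xi))_{+}\in L^{\infty}(0,T;L^{1})$, whence $|\{x:\xi(t,x)=0\}|=0$ for a.e.\ $t$; off this null set one defines $u=m/\xi$, so that $m=\xi u$ and $\xi_{\delta,\kappa,r_{0}}u_{\delta,\kappa,r_{0}}\to\xi u$ strongly in $L^{2}(0,T;L^{q})$ for every $q\in[1,1.5)$. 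The step I expect to be the main obstacle is the time-derivative estimate, above all the vertical transport $\partial_{z}(\xi uw)$, which has to be rewritten through the nonlocal formula for $w$ before any bound is available, together with checking that the Korteweg-type terms $\kappa\xi\nabla_{x}(\Delta_{x}\sqrt{\xi}/\sqrt{\xi})$ and $\delta\xi\nabla_{x}\Delta_{x}^{5}\xi$ really land in a fixed negative Sobolev space.
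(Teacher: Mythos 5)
Your argument for the main statement is essentially the paper's proof: the same decomposition of $\nabla_{x}m_{\delta,\kappa,r_{0}}$ and $\partial_{z}m_{\delta,\kappa,r_{0}}$ giving a uniform bound in $L^{2}(0,T;W^{1,1})$, the same term-by-term estimate of $\partial_{t}m_{\delta,\kappa,r_{0}}$ in $L^{2}(0,T;H^{-s})$ from the momentum equation (the paper bounds $\xi uw$ in $L^{2}(0,T;L^{1})$ directly via $\sqrt{\xi}w\in L^{2}(0,T;L^{2})$ from Poincar\'e rather than unwinding the nonlocal formula for $w$, but both routes work), and the same Aubin--Lions conclusion.

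One caveat on your final paragraph: you write ``since $r_{0}>0$ is fixed,'' but in this section all three parameters $\delta,\kappa,r_{0}$ tend to zero simultaneously, so the B-D entropy only gives $\sup_{t}\int_{\Omega}(\log(1/\xi_{\delta,\kappa,r_{0}}))_{+}\,dxdz\leq C/r_{0}$, which degenerates in the limit; the conclusion $|\{\xi(t,\cdot)=0\}|=0$ does not follow this way (that argument is used in the paper only at the $\eta\to0$ stage, where $r_{0}$ genuinely is fixed). This does not affect the lemma's main claim: the remark only requires defining $u=m/\xi$ off the vacuum set and identifying $m=0$ on it, which follows from $|m_{\delta,\kappa,r_{0}}|=\sqrt{\xi_{\delta,\kappa,r_{0}}}\,|\sqrt{\xi_{\delta,\kappa,r_{0}}}u_{\delta,\kappa,r_{0}}|$ together with the a.e.\ convergences already established, without any claim that the vacuum set is null.
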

\begin{proof}
Since
\begin{equation}\label{e6}
\begin{aligned}
&\nabla_{x}(\xi_{\delta,\kappa,r_{0}}u_{\delta,\kappa,r_{0}})=\nabla_{x}{\xi_{\delta,\kappa,r_{0}}}\otimes u_{\delta,\kappa,r_{0}}+\xi_{\delta,\kappa,r_{0}}\nabla_{x}{u_{\delta,\kappa,r_{0}}}\\
&=2\nabla_{x}\sqrt
{\xi_{\delta,\kappa,r_{0}}}\otimes
\sqrt{\xi_{\delta,\kappa,r_{0}}}u_{\delta,\kappa,r_{0}}+\sqrt{\xi_{\delta,\kappa,r_{0}}}\sqrt{\xi_{\delta,\kappa,r_{0}}}
\nabla{u_{\delta,\kappa,r_{0}}}\in L^{2}(0,T;L^{1}),
\end{aligned}
\end{equation}
$$
\partial_{z}(\xi_{\delta,\kappa,r_{0}}u_{\delta,\kappa,r_{0}})=\sqrt{\xi_{\delta,\kappa,r_{0}}}\sqrt
{\xi_{\delta,\kappa,r_{0}}}\partial_{z}u_{\delta,\kappa,r_{0}}\in L^{2}(0,T;L^{\frac{3}{2}})
$$
and (\ref{e3}), we deduce
$$\xi_{\delta,\kappa,r_{0}}u_{\delta,\kappa,r_{0}}\in L^{2}(0,T;W^{1,1}).$$
Moreover, we can show
$$\partial_{t}(\xi_{\delta,\kappa,r_{0}}u_{\delta,\kappa,r_{0}})\ is\  bounded \ in\  L^{2}(0,T;H^{-s}),\ \ for\ some \ constant\ s>0.$$
In fact,
\begin{equation}\label{e7}
\begin{aligned}
&\partial_{t}(\xi_{\delta,\kappa,r_{0}} u_{\delta,\kappa,r_{0}})=2\overline{\nu}_{1}{\rm div}_{x}(\xi_{\delta,\kappa,r_{0}} D_{x}(u_{\delta,\kappa,r_{0}}))+\overline{\nu}_{2}\partial_{z}(\xi_{\delta,\kappa,r_{0}} \partial_{z}u_{\delta,\kappa,r_{0}})\\
&+\kappa\xi_{\delta,\kappa,r_{0}}\nabla_{x}(\dfrac{\Delta_{x}\sqrt{\xi_{\delta,\kappa,r_{0}}}}
{\sqrt{\xi_{\delta,\kappa,r_{0}}}})
+\delta\xi_{\delta,\kappa,r_{0}}\nabla_{x}\Delta_{x}^{5}\xi_{\delta,\kappa,r_{0}}-{\rm div}_{x}(\xi_{\delta,\kappa,r_{0}} u_{\delta,\kappa,r_{0}}\otimes u_{\delta,\kappa,r_{0}})\\
&-\partial_{z}(\xi_{\delta,\kappa,r_{0}} u_{\delta,\kappa,r_{0}}w_{\delta,\kappa,r_{0}})-\nabla_{x}\xi_{\delta,\kappa,r_{0}}+r_{0} u_{\delta,\kappa,r_{0}}-r\xi_{\delta,\kappa,r_{0}} |u_{\delta,\kappa,r_{0}}|u_{\delta,\kappa,r_{0}}.\\
\end{aligned}
\end{equation}

With the estimates (\ref{e1}) in hand, we deduce
$$
\xi_{\delta,\kappa,r_{0}}u_{\delta,\kappa,r_{0}}\otimes u_{\delta,\kappa,r_{0}}\in L^{\infty}(0,T;L^{1}),
\ \ \xi_{\delta,\kappa,r_{0}}u_{\delta,\kappa,r_{0}}w_{\delta,\kappa,r_{0}}\in L^{2}(0,T;L^{1})
$$
and
$$
\xi_{\delta,\kappa,r_{0}}\partial_{z}u_{\delta,\kappa,r_{0}}\in L^{2}(0,T;L^{\frac{3}{2}}),\ \
\xi_{\delta,\kappa,r_{0}}D_{x}(u_{\delta,\kappa,r_{0}})\in L^{2}(0,T;L^{\frac{3}{2}}).
$$
Particularly, thanks to the Sobolev imbedding theorem, we have
$$
{\rm div}_{x}(\xi_{\delta,\kappa,r_{0}}u_{\delta,\kappa,r_{0}}\otimes u_{\delta,\kappa,r_{0}})\in L^{\infty}(0,T;W^{-2,2}),
\ \partial_{z}(\xi_{\delta,\kappa,r_{0}}u_{\delta,\kappa,r_{0}}w_{\delta,\kappa,r_{0}})\in L^{2}(0,T;W^{-2,2})
$$
and
$$
\partial_{z}(\xi_{\delta,\kappa,r_{0}}\partial_{z}u_{\delta,\kappa,r_{0}})\in L^{2}(0,T;W^{-2,2}),\ \
{\rm div}_{x}(\xi_{\delta,\kappa,r_{0}}D_{x}(u_{\delta,\kappa,r_{0}}))\in L^{2}(0,T;W^{-2,2}).
$$

Moreover, thanks to $\sqrt{\delta}\xi_{\delta,\kappa,r_{0}} \in L^{\infty}(0,T;H^{5})\cap L^{2}(0,T;H^{6}) $
and $\sqrt{\kappa}\sqrt{\xi_{\delta,\kappa,r_{0}}}\in L^{2}(0,T;H^{2})$, we get
$$
\delta\xi_{\delta,\kappa,r_{0}}\nabla_{x}\Delta_{x}^{5}\ \in L^{2}(0,T;W^{-5,2})
$$
and
\begin{equation*}
\begin{aligned}
&\kappa\xi_{\delta,\kappa,r_{0}}\nabla_{x}(\frac{\Delta_{x}\sqrt{\xi_{\delta,\kappa,r_{0}}}}{\sqrt{\xi_{\delta,\kappa,r_{0}}}})=
\kappa\nabla_{x}(\sqrt{\xi_{\delta,\kappa,r_{0}}}\Delta_{x}\sqrt{\xi_{\delta,\kappa,r_{0}}})-2\kappa\Delta_{x}\sqrt{\xi_{\delta,\kappa,r_{0}}}
\nabla_{x}\sqrt{\xi_{\delta,\kappa,r_{0}}}\\
&\in L^{2}(0,T;W^{-3,2}).
\end{aligned}
\end{equation*}
Then we obtain
$$\partial_{t}(\xi_{\delta,\kappa,r_{0}}u_{\delta,\kappa,r_{0}})\ is\  bounded \ in\  L^{2}(0,T;H^{-5}).$$
Hence, using  Lemma \ref{le2}, Lemma \ref{l8} is proved.
\end{proof}

With Lemma \ref{l6} and Lemma \ref{l8} in hands , similar to the proof of Lemma \ref{l9}, we can deduce
\begin{equation}\label{e8}
\sqrt{\xi_{\delta,\kappa,r_{0}}}u_{\delta,\kappa,r_{0}}\rightarrow\sqrt{\xi}u\ \ strongly\ in\ L^{2}(0,T;L^{2}).
\end{equation}
as $\kappa=\delta=r_{0}\rightarrow0$ .
\subsection{ Convergence of the terms ${\rm div_{x}}(\xi_{\delta,\kappa,r_{0}}D_{x}(u_{\delta,\kappa,r_{0}}))$, $ \kappa\xi_{\delta,\kappa,r_{0}}\nabla_{x}(\dfrac{\Delta_{x}\sqrt{\xi_{\delta,\kappa,r_{0}}}}{\sqrt{\xi_{\delta,
\kappa,r_{0}}}}), $ \\$r_{0}u_{\delta,\kappa,r_{0}}$ and  $\xi_{\delta,\kappa,r_{0}}\nabla_{x}\Delta_{x}^{5}\xi_{\delta,\kappa,r_{0}}$}
 For any test function $\varphi\in C_{0}^{\infty}((0,T);\Omega)$, we have
\begin{equation*}
\begin{aligned}
r_{0}\int_{0}^{T}\int_{\Omega} u_{\delta,\kappa,r_{0}}\varphi dxdzdt\leq \sqrt{r_{0}}\|\sqrt{r_{0}}u_{\delta,\kappa,r_{0}}\|_{L^{2}(L^{2})}\|\varphi\|_{L^{2}(L^{2})}\rightarrow
0,\ \ as\ r_{0}\rightarrow0.
\end{aligned}
\end{equation*}

To deal with the diffusion term ${\rm div_{x}}(
\xi_{\delta,\kappa,r_{0}}D_{x}(u_{\delta,\kappa,r_{0}}))$. Recalling (\ref{b34}), we have
\begin{equation}\label{e9}
\begin{aligned}
&\int_{0}^{T}\int_{\Omega} {\rm div}_{x}(
\xi_{\delta,\kappa,r_{0}}D_{x}(u_{\delta,\kappa,r_{0}}))\varphi dxdzdt\\
&=\frac{1}{2}\int_{0}^{T}\int_{\Omega}(\xi_{\delta,\kappa,r_{0}}u_{\delta,\kappa,r_{0}}\cdot\Delta_{x}\varphi+
2\nabla_{x}\varphi\cdot\nabla_{x}\sqrt{\xi_{\delta,\kappa,r_{0}}}
\cdot\sqrt{\xi_{\delta,\kappa,r_{0}}}u_{\delta,\kappa,r_{0}})dxdzdt\\
&\ \ \ +\frac{1}{2}\int_{0}^{T}\int_{\Omega}(\xi_{\delta,\kappa,r_{0}}u_{\delta,\kappa,r_{0}}\cdot{\rm div}_{x}(\nabla_{x}^{t}\varphi)+2\nabla_{x}^{t}\varphi\cdot\nabla_{x}\sqrt{\xi_{\delta,\kappa,r_{0}}}
\cdot\sqrt{\xi_{\delta,\kappa,r_{0}}}u_{\delta,\kappa,r_{0}})dxdzdt.\\
\end{aligned}
\end{equation}
Since $\nabla_{x}\sqrt{\xi_{\delta,\kappa,r_{0}}}\in L^{\infty}([0,T];L^{2})$, the sequence $\nabla_{x}\sqrt{\xi_{\delta,\kappa,r_{0}}}$ weakly converges. By using Lemma \ref{l6} , Lemma \ref{l8} and $\sqrt{\xi_{\delta,\kappa,r_{0}}}u_{\delta,\kappa,r_{0}}\rightarrow \sqrt{\xi}u\ \ strongly\ \ in\ L^{2}(0,T;L^{2})$, we have
\begin{equation}\label{e10}
\begin{aligned}
&\frac{1}{2}\int_{0}^{T}\int_{\Omega}(\xi_{\delta,\kappa,r_{0}}u_{\delta,\kappa,r_{0}}\cdot\Delta_{x}\varphi+
2\nabla_{x}\varphi\cdot\nabla_{x}\sqrt{\xi_{\delta,\kappa,r_{0}}}
\cdot\sqrt{\xi_{\delta,\kappa,r_{0}}}u_{\delta,\kappa,r_{0}})dxdzdt\\
&+\frac{1}{2}\int_{0}^{T}\int_{\Omega}(\xi_{\delta,\kappa,r_{0}}u_{\delta,\kappa,r_{0}}\cdot{\rm div}_{x}(\nabla_{x}^{t}\varphi)+2\nabla_{x}^{t}\varphi\cdot\nabla_{x}\sqrt{\xi_{\delta,\kappa,r_{0}}}
\cdot\sqrt{\xi_{\delta,\kappa,r_{0}}}u_{\delta,\kappa,r_{0}})dxdzdt\\
&\rightarrow\frac{1}{2}\int_{0}^{T}\int_{\Omega}(\xi u\cdot\Delta_{x}\varphi+2\nabla_{x}\varphi\cdot\nabla_{x}\sqrt{\xi}
\cdot\sqrt{\xi}u)dxdzdt\\
&+\frac{1}{2}\int_{0}^{T}\int_{\Omega}(\xi u\cdot{\rm div}_{x}(\nabla_{x}^{t}\varphi)+2\nabla_{x}^{t}\varphi\cdot\nabla_{x}\sqrt{\xi}
\cdot\sqrt{\xi}u)dxdzdt,\\
\end{aligned}
\end{equation}
as $\delta=\kappa=r_{0}\rightarrow0$.
Hence
$$
\int_{0}^{T}\int_{\Omega} {\rm div}_{x}(
\xi_{\delta,\kappa,r_{0}}D_{x}(u_{\delta,\kappa,r_{0}}))\varphi dxdzdt\rightarrow\int_{0}^{T}\int_{\Omega} {\rm div}_{x}(\xi D_{x}(u))\varphi dxdzdt.
$$
For the quantum term, recalling (\ref{e1}), we have
\begin{equation*}
\begin{aligned}
&\kappa\int_{0}^{T}\int_{\Omega}\xi_{\delta,\kappa,r_{0}}\nabla_{x}(\dfrac{\Delta_{x}\sqrt{\xi_{\delta,\kappa,r_{0}
}}}{\sqrt{\xi_{\delta,\kappa,r_{0}}}})\varphi dxdzdt\\
&=-\kappa\int_{0}^{T}\int_{\Omega}\Delta_{x}\sqrt{\xi_{\delta,\kappa,r_{0}}}\nabla_{x}\sqrt{\xi_{\delta,\kappa,r_{0}}}\varphi dxdzdt\\
&\ \ \ -\kappa\int_{0}^{T}\int_{\Omega}\Delta_{x}\sqrt{\xi_{\delta,\kappa,r_{0}}}\sqrt{\xi_{\delta,\kappa,r_{0}}}{\rm div}_{x}\varphi dxdzdt\\
&\leq\sqrt{\kappa}\parallel\sqrt{\kappa}\Delta_{x}\sqrt{\xi_{\delta,\kappa,r_{0}}}\parallel_{ L^{2}(0,T;L^{2})}\parallel\nabla_{x}\sqrt{\xi_{\delta,\kappa,r_{0}}}\parallel_{L^{2}(0,T;L^{2})}
\parallel\varphi\parallel_{L^{\infty}(0,T;L^{\infty})}\\
&\ \ \ +\sqrt{\kappa}\parallel\sqrt{\kappa}\Delta_{x}\sqrt{\xi_{\delta,\kappa,r_{0}}}\parallel_{ L^{2}(0,T;L^{2})}\parallel\sqrt{\xi_{\delta,\kappa,r_{0}}}\parallel_{L^{2}(0,T;L^{2})}\parallel{\rm div}_{x}\varphi\parallel_{L^{\infty}(0,T;L^{\infty})}\\
&\rightarrow0
\end{aligned}
\end{equation*}

Finally, we prove the convergence of the high order term $\xi_{\delta,\kappa,r_{0}}\nabla_{x}\triangle_{x}^{5}\xi_{\delta,\kappa,r_{0}}$.
Using Lemma \ref{le1}, we can prove
$$\|\nabla_{x}^{5}\xi_{\delta,\kappa,r_{0}}\|_{L^{2}}\leq C \|\nabla_{x}^{6}\xi_{\delta,\kappa,r_{0}}\|_{L^{2}}^{\frac{9}{11}}\|\xi_{\delta,\kappa,r_{0}}\|
_{L^{3}}^{\frac{2}{11}}.$$
Thus,
$$
\int_{0}^{T}\delta(\int_{\Omega}|\nabla_{x}^{5}\xi_{\delta,\kappa,r_{0}}dxdz)^{\frac{11}{9}}dt\leq C
(\sup_{t\in(0,T)}\|\xi_{\delta,\kappa,r_{0}}\|_{L^{3}})^{\frac{4}{9}}\int_{0}^{T}\delta\int_{\Omega}|\nabla_{x}^{6}
\xi_{\delta,\kappa,r_{0}}|^{2}dxdzdt
$$
which implies $\delta^{\frac{9}{22}}\nabla_{x}^{5}\xi_{\delta,\kappa,r_{0}}\in L^{\frac{22}{9}}(0,T;L^{2})$.

For any test function $\varphi\in C_{0}^{\infty}([0,T];\Omega)$, we have
$$\delta\int_{0}^{T}\int_{\Omega} \xi_{\delta,\kappa,r_{0}}\nabla_{x}\Delta_{x}^{5}\xi_{\delta,\kappa,r_{0}} \varphi dxdzdt=-\delta\int_{0}^{T}\int_{\Omega}\Delta_{x}^{2}{\rm div}_{x}(\xi_{\delta,\kappa,r_{0}} \varphi)\Delta_{x}^{3}\xi_{\delta,\kappa,r_{0}} dxdzdt.$$
Now, we deal with the most difficult term
\begin{equation}\label{e11}
\begin{aligned}
&|\delta\int_{0}^{T}\int_{\Omega}\Delta_{x}^{2}(\nabla_{x}\xi_{\delta,\kappa,r_{0}})\Delta_{x}^{3}\xi_{\delta,\kappa,r_{0}}\varphi
dxdzdt|\\
&\leq C\delta^{\frac{1}{11}}\|\sqrt{\delta}\nabla_{x}^{6}\xi_{\delta,\kappa,r_{0}}\|_{L^{2}(L^{2})}\|\delta
^{\frac{9}{22}}\nabla_{x}^{5}\xi_{\delta,\kappa,r_{0}}\|_{L^{\frac{22}{9}}(L^{2})}\|\varphi\|_{L^{11}(L^{\infty})}
\rightarrow0,
\end{aligned}
\end{equation}
as $\delta\rightarrow0$.

Similar to the above arguments, we can control the other terms from
$$\delta\int_{0}^{T}\int_{\Omega} \xi_{\delta,\kappa,r_{0}}\nabla_{x}\Delta_{x}^{5}\xi_{\delta,\kappa,r_{0}} \varphi dxdzdt.$$
Thus, we have
$$\delta\int_{0}^{T}\int_{\Omega} \xi_{\delta,\kappa,r_{0}}\nabla_{x}\Delta_{x}^{5}\xi_{\delta,\kappa,r_{0}} \varphi dxdzdt\rightarrow0$$
as $\delta\rightarrow0$.

Passing to the limits in (\ref{d7}) as $\delta\rightarrow0,\kappa\rightarrow0 \ \ and\ \  r_{0}\rightarrow0$, we have
 \begin{equation}\label{e12}
\left\{\begin{array}{lll}
\partial_{t}\xi+{\rm div}_{x}(\xi u)+\partial_{z}(\xi w)=0,\\
\partial_{t}(\xi u)+{\rm div}_{x}(\xi u\otimes u)+\partial_{z}(\xi uw)+\nabla_{x}\xi+r\xi |u|u\\
=2\overline{\nu}_{1}{\rm div}_{x}(\xi D_{x}(u))+\overline{\nu}_{2}\partial_{z}(\xi \partial_{z}u),\\
\partial_{z}\xi=0.
\end{array}\right.
\end{equation}
holds in the sense of distribution on $(0; T)\times\Omega$.

 Furthermore,thanks to the lower semi-continuity of the convex functions ,we can obtain the following energy inequality and B-D entropy by passing to the limits as $r_{0}=\delta=\kappa\rightarrow0$
 \begin{equation}\label{e13}
\begin{aligned}
&\int_{\Omega}\frac{1}{2}\xi u^{2}+\xi\ln\xi-\xi+1dxdz
+r\int_{0}^{T}\int_{\Omega}\xi|u|^{3}dxdzdt\\
&+\int_{0}^{T}\int_{\Omega} 2\overline{\nu}_{1}\xi|D_{x}(u)|^{2}dxdzdt
+\int_{0}^{T}\int_{\Omega}\overline{\nu}_{2}\xi|\partial_{z}u|^{2}dxdzdt\\
&\leq \int_{\Omega}\frac{1}{2}\xi_{0} u_{0}^{2}+\xi_{0}\ln\xi_{0}-\xi_{0}+1dxdz
\end{aligned}
\end{equation}
and
\begin{equation}\label{e14}
\begin{aligned}
&\int_{\Omega}(\frac{1}{2}\xi(u+2\overline{\nu}_{1} \nabla_{x}\ln \xi)^{2}) dxdzdt
+2\overline{\nu}_{1}\int_{0}^{T}\int_{\Omega}\xi|\partial_{z}w|^{2}dxdzdt\\
&+8\overline{\nu}_{1}\int_{0}^{T}\int_{\Omega}
|\nabla_{x}\sqrt{\xi}|^{2}dxdzdt
+r\int_{0}^{T}\int_{\Omega}\xi|u|^{3}dxdzdt\\
&+\overline{\nu}_{2}\int_{0}^{T}\int_{\Omega}\xi|\partial_{z}u|^{2}dxdzdt
+\overline{\nu}_{1}\int_{0}^{T}\int_{\Omega}\xi|A_{x}(u)|^{2}dxdzdt\\
&\leq\int_{\Omega}(\frac{1}{2}\xi_{0}(u_{0}+2\overline{\nu}_{1} \nabla_{x}\ln \xi_{0})^{2}
) dxdz+E_{0}+C.
\end{aligned}
\end{equation}
Thus we have completed the proof of Theorem \ref{theorem1}.
\subsection{ Proof of Theorem1.2}From Theorem \ref{theorem1}, we consider the weak solution $(\xi,u,w)$  of system (\ref{a3}). At this level, the weak solutions satisfy the energy inequality (\ref{e13}) and the B-D entropy (\ref{e14}), thus we have the following regularities:
\begin{equation}\label{e15}
\begin{aligned}
&\sqrt{\xi}u\in L^{\infty}(0,T;L^{2}),\ \nabla_{x}\sqrt{\xi}\in L^{\infty}(0,T;L^{2})\cap L^{2}(0,T;L^{2}),\\
&\sqrt{\xi}\partial_{z}u\in L^{2}(0,T;L^{2}), \xi^{\frac{1}{3}}u\in L^{3}(0,T;L^{3}),\ \sqrt{\xi}\nabla u\in L^{2}(0,T;L^{2}),\\
&\sqrt{\xi}\partial_{z}w\in L^{2}(0,T;L^{2}), \sqrt{\xi}D_{x}(u)\in L^{2}(0,T;L^{2}),
\end{aligned}
\end{equation}

In order to prove Theorem\ref{theorem2}, we recall that
 $$
 \rho(x,y,t)=\xi(x,t)e^{-y}\ \ \ and \ \ \ w(x,z,t)=v(x,y,t)e^{-y}
 $$
where $(\frac{d}{dy})z=e^{-y}$. Forthmore, by the change of variable $z=1-e^{-y}$ in integrals, we obtain the following properties:
$$\|\sqrt{\rho}\|_{L^{\infty}(0,T;L^{2})}=\|\sqrt{\xi}\|_{L^{\infty}(0,T;L^{2})}$$
 $$\|\sqrt{\rho}u\|_{L^{\infty}(0,T;L^{2})}=\|\sqrt{\xi}u\|_{L^{\infty}(0,T;L^{2})}$$
 $$\|\sqrt[3]{\rho}u\|_{L^{3}(0,T;L^{3})}=\|\sqrt[3]{\xi}u\|_{L^{3}(0,T;L^{3})}$$
$$\|\sqrt{\rho}D_{x}(u)\|_{L^{2}(0,T;L^{2})}=\|\sqrt{\xi}D_{x}(u)\|_{L^{2}(0,T;L^{2})}$$
$$\|\sqrt{\nu_{2}}\partial_{y}u\|_{L^{2}(0,T;L^{2})}=\sqrt{\nu_{2}}\|\sqrt{\xi}\partial_{z}u\|_{L^{2}(0,T;L^{2})}$$
$$\|\partial_{y}\rho\|_{L^{\infty}(0,T;L^{2})}=\sqrt{\alpha}\|\xi\|_{L^{\infty}(0,T;L^{2})}$$
where
$$\alpha=\frac{\int_{0}^{1-e^{-h}}(1-z)dz}{1-e^{-h}}$$

Moreover,
\begin{equation*}
\begin{aligned}
\|\sqrt{\rho}\partial_{y}v\|_{L^{2}(0,T;L^{2})}^{2}&=\int_{0}^{T}\int_{0}^{H}\int_{\acute{\Omega_{x}}}|\sqrt{\rho}\partial_{y}v|^{2}dxdzdt\\
&\leq2\int_{0}^{T}\int_{0}^{h}\int_{\acute{\Omega_{x}}}(\xi\partial_{z}w^{2}+\xi w^{2}\frac{1}{(1-z)^{2}})dxdzdt\\
&\leq C\int_{0}^{T}\int_{0}^{h}\int_{\Omega_{x}}\xi\partial_{z}w^{2}dxdzdt
\end{aligned}
\end{equation*}
and
\begin{equation*}
\begin{aligned}
\|\sqrt{\rho}v\|_{L^{2}(0,T;L^{2})}^{2}&=\int_{0}^{T}\int_{0}^{H}\int_{\acute{\Omega_{x}}}|\sqrt{\rho}v|^{2}dxdzdt\\
&=\int_{0}^{T}\int_{0}^{h}\int_{\Omega_{x}}\xi w^{2}\frac{1}{(1-z)^{2}}dxdzdt\\
&\leq C\int_{0}^{T}\int_{0}^{h}\int_{\Omega_{x}}\xi w^{2}dxdzdt
\end{aligned}
\end{equation*}
With the above estimates in hand, Theorem\ref{theorem2} is proved.

{\bf Acknowledgement.}
Dou is partially supported by NSFC (Grant Nos. 11401395, 11471334) and BNSF (Grant No. 1154007), Jiu is supported by National Natural Sciences Foundation of China (No. 11671273 and No. 11231006).


\vspace{2mm}


\end{document}